\documentclass[tbtags,reqno]{amsart}
\usepackage{graphicx,color}
\usepackage{tikz}
\DeclareGraphicsRule{.wmf}{bmp}{}{}
\usepackage{amsmath,amsfonts,amstext,amsbsy,amsopn,amsthm,amscd,amsxtra,dsfont,amssymb,pb-diagram}
\usepackage{amsmath,amsthm,amsopn,amsfonts,amssymb,epsfig,enumerate}
\usepackage{amsthm,amsopn,amsfonts,amssymb,epsfig,tabmac,mathrsfs,cancel,wasysym}
\usepackage[all]{xy}
\usepackage{tabmac}
\usepackage[active]{srcltx}

\pagestyle{plain} \textwidth=152truemm \textheight=217truemm \oddsidemargin=1mm \evensidemargin=1mm
\parskip=5pt plus 1pt minus 1pt

\numberwithin{equation}{section}

\makeatletter
\renewcommand{\subsubsection}{\@startsection
{subsubsection} {3} {0mm} {\baselineskip} {-0.5\baselineskip} {\normalfont\normalsize\bfseries}} \makeatother

\def\cal L{{\mathcal L}}

\newcommand{\Aset}{\mathcal A}
\newcommand{\Bset}{\mathcal B}
\newcommand{\Cset}{\mathcal C}

\newcommand{\La}{\Lambda}

\newcommand{\OSS}{{\footnotesize \tableau[scY]{\tf \ocircle}}}
\newcommand{\BSS}{{\footnotesize  \tableau[scY]{ \fl }}}
\newcommand{\WSS}{{\footnotesize \tableau[scY]{ \\   }}}
\newcommand{\BCC}{{\footnotesize \tableau[scY]{\bl \cerclep}}}
\newcommand{\WCC}{{\footnotesize \tableau[scY]{\bl \tcercle{}}}}

\def\AA {\mathfrak A}
\def\BB {\mathfrak B}
\def \part {\vdash}
\def\La {\Lambda}

\def\cd{\circledast}

\newcommand{\cercle}[1]{\ensuremath{\setlength{\unitlength}{1ex}\begin{picture}(2.8,2.8)\put(1.5,1.2){\circle{2.8}\makebox(-5.6,0){#1}}\end{picture}}}
\newcommand{\tcercle}[1]{\ensuremath{\setlength{\unitlength}{1ex}\begin{picture}(2.8,2.8)\put(1.4,1.4){\circle{2.8}\makebox(-5.6,0){#1}}\end{picture}}}

\newcommand{\cerclep}{\ensuremath{\setlength{\unitlength}{1ex}\begin{picture}(2.8,2.8)\put(1.4,1.4){\circle*{2.7}\makebox(-5.6,0)}\end{picture}}}

\theoremstyle{plain}
\newtheorem{definition}{Definition} 
\newtheorem{theorem}[definition]{Theorem}
\newtheorem{example}[definition]{Example}
\newtheorem{lemma}[definition]{Lemma}
\newtheorem{corollary}[definition]{Corollary}
\newtheorem{proposition}[definition]{Proposition}

\newtheorem{remark}[definition]{Remark}

\theoremstyle{definition}

\numberwithin{equation}{section}

\title{Symmetry and Pieri rules for the bisymmetric Macdonald polynomials}
\begin{document}

\author{Manuel Concha}
\address{Instituto de Matem\'aticas, Universidad de
Talca, 2 norte 685, Talca, Chile.}
\email{manuel.concha@utalca.cl}
\author{Luc Lapointe}
\address{Instituto de Matem\'aticas, Universidad de
Talca, 2 norte 685, Talca, Chile.}
\email{llapointe@utalca.cl}

\begin{abstract}
Bisymmetric Macdonald polynomials can be obtained through a process of antisymmetrization and $t$-symmetrization of non-symmetric Macdonald polynomials. Using the double affine Hecke algebra,
we show that the evaluation of the bisymmetric Macdonald polynomials satisfies a symmetry property generalizing that satisfied by the usual Macdonald polynomials.
We then obtain Pieri rules for the bisymmetric Macdonald polynomials where the sums are over certain vertical strips.
\end{abstract}

\keywords{Pieri rules, non-symmetric Macdonald polynomials, double affine Hecke algebra, Cherednik operators}

\thanks{Funding: this work was supported by
the Fondo Nacional de Desarrollo Cient\'{\i}fico y Tecnol\'ogico de Chile (FONDECYT) Regular Grant \#1210688 and the Beca de Doctorado Nacional ANID \#21201319.}

\maketitle

\section{Introduction}
An extension to superspace of the Macdonald polynomials was defined in \cite{BDLM,BDLM2}.
In this new setting, the Macdonald polynomials now depend on the anticommuting variables $\theta_1,\dots,\theta_N$ as well as the usual variables $x_1,\dots,x_N$.  But when focusing on the dominant monomial in the anticommuting variables (of degree $m$, let's say),
the Macdonald polynomials in superspace essentially correspond to non-symmetric Macdonald polynomials whose variables $x_1,\dots,x_m$ (resp. $x_{m+1},\dots,x_N$) are antisymmetrized (resp. $t$-symmetrized).  To be more precise, the Macdonald polynomial in superspace $P_\Lambda(x,\theta;q,t)$, indexed by the superpartition $\Lambda$, is such that
\begin{equation} \label{eqanti}
  P_\Lambda(x,\theta;q,t) \longleftrightarrow \mathcal A_{1,m} \mathcal S^t_{m+1, N} E_\eta(x;q,t)
\end{equation}
where $E_\eta(x;q,t)$ is a non-symmetric Macdonald polynomial, and where 
$\mathcal A_{1,m}$ (resp. $\mathcal S^t_{m+1, N}$) stands for the antisymmetrization (resp. $t$-symmetrization) operator in the variables $x_1,\dots,x_m$ (resp. $x_{m+1},\dots,x_N)$. We note that polynomials obtained through such a
process of antisymmetrization and $t$-symmetrization of non-symmetric Macdonald polynomials
were also considered in \cite{BDF,Ba}, where they were called Macdonald polynomials with prescribed symmetry.

Many of the important features of the Macdonald polynomials have been extended
to superspace. For instance, the duality and the existence of Macdonald operators were shown in \cite{BDLM2}
while explicit formulas for the norm and the evaluation  were provided in \cite{GL}. Most remarkably, an extension of the original Macdonald positivity conjecture was stated in \cite{BDLM2}.
Moreover, conjectural Pieri rules for the Macdonald polynomials in superspace were given in \cite{GJL} (and shown to naturally connect to the 6-vertex model) while a symmetry property for the Macdonald polynomials in superspace was presented  in \cite{ABDLM,B} as a conjecture.

In this article, we will work with a bisymmetric version of the Macdonald polynomials obtained by dividing the polynomials in  \eqref{eqanti}
by a $t$-Vandermonde determinant.  Our
bisymmetric Macdonald polynomial are thus defined (up to a constant) as
\begin{equation} 
\mathcal  P_\Lambda(x;q,t) \propto \frac{1}{\Delta^t_m(x)} \mathcal A_{1,m} \mathcal S^t_{m+1, N} E_\eta(x;q,t)
\end{equation}
Using properties of the double affine Hecke algebra, we first  show that the bisymmetric Macdonald polynomials satisfy two symmetry properties. Indeed, to each of the two natural evaluations in superspace, $u_{\Lambda_0}^-$ and $u_{\Lambda_0}^+$, corresponds a symmetry property:
$$
u_\Omega^-(\tilde{\mathcal P}_{\Lambda}^-)=u_\Lambda^-(\tilde{\mathcal P}_{\Omega}^-)
\qquad {\rm and} \qquad u_\Omega^+(\tilde{\mathcal P}_{\Lambda}^+)=u_\Lambda^+(\tilde{\mathcal P}_{\Omega}^+)
$$
where $\tilde{\mathcal P}_{\Lambda}^-$ and $\tilde{\mathcal P}_{\Lambda}^+$ are both equal to $\mathcal  P_\Lambda(x;q,t)$ up to a (distinct) constant. By relation \eqref{eqanti}, this implies that the Macdonald polynomials in superspace also satisfy the same symmetries, thus proving a conjecture appearing in \cite{ABDLM,B}.  We should mention that the crucial step in the proof of the symmetry is the construction of a new symmetric pairing stemming from the double affine Hecke algebra (see Lemma~\ref{SimGen}).

We then proceed to obtain the Pieri rules associated to the elementary symmetric function $e_{r}(x_{m+1},\dots,x_N) $.  The proof turns out to be  quite long and technical, making heavy use of the properties of the double affine Hecke algebra.   As a  first step, an explicit expression for the action of $ e_{r}(Y_{m+1},\dots,Y_N)$ on any bisymmetric function (times a $t$-Vandermonde determinant) is derived, where the $Y_i$'s are the Cherednik operators.  The symmetries we just described are then used to deduce the Pieri rules:
\begin{equation} \label{Pieri1}
  e_{r}(x_{m+1},\dots,x_N)    {\mathcal P}_{\Lambda}(x;q,t) = \sum_\Omega \left(\frac{u_\Lambda^+(C_{J,\sigma})}{ u_\Lambda^+(\Delta_m^t)} \frac{u_{\Lambda_0^+}(\mathcal P_\Lambda)}{u_{\Lambda_0^+}(\mathcal P_\Omega)} \right)   {\mathcal P}_{\Omega}(x,q,t)  
\end{equation}  
where the sum is over all the superpartitions $\Omega$ such that  $\Omega/\Lambda$ belongs to a certain family of vertical $r$-strips,
called vertical $r$-strips of type I (see Theorem~\ref{theoPieri} and Corollary~\ref{coroPieri}). We stress that all the terms in the coefficients have explicit expressions.

Finally, we obtain the other set of Pieri rules:
\begin{equation} \label{Pieri2}
  e_{r}(x_{1},\dots,x_m)    {\mathcal P}_{\Lambda}(x;q,t) = q^r \sum_\Omega \left(\frac{u_\Lambda^+(D_{J,\sigma})}{ u_\Lambda^+(\Delta_m^t)} \frac{u_{\Lambda_0^+}(\mathcal P_\Lambda)}{u_{\Lambda_0^+}(\mathcal P_\Omega)} \right)   {\mathcal P}_{\Omega}(x,q,t)  
\end{equation}  
where the sum is over all the superpartitions $\Omega$ such that  $\Omega/\Lambda$  belongs to a slightly  different family of vertical $r$-strips,
called vertical $r$-strips of type II (see Theorem~\ref{PieriAlg-} and Corollary~\ref{coroPieri2}). The article being already quite long, it seemed more appropriate to only provide the main steps of the proof (for the full proof see \cite{Con}), which is in any case very similar to the proof of the Pieri rules associated to $e_{r}(x_{m+1},\dots,x_N)$.

The Pieri rules for the Macdonald polynomials in superspace correspond to the action of $e_r(x_1,\dots,x_N)$ on a Macdonald polynomial in superspace.  An application of our results would be to 
deduce those Pieri rules  from \eqref{Pieri1} and \eqref{Pieri2} using the relation \cite{Mac}
$$
e_r(x_1,\dots,x_N) = \sum_{\ell=0}^r e_\ell(x_1,\dots,x_m) e_{r-\ell}(x_{m+1},\dots,x_N)
$$
The conjectured Pieri rules for $e_r(x_1,\dots,x_N)$ involve the Izergin-Korepin determinant related to the partition function of the  6-vertex model. It would be interesting to understand how this determinant originates from the coefficients appearing in \eqref{Pieri1} and \eqref{Pieri2}.
\section{Preliminaries}
\subsection{Double affine Hecke algebra and non-symmetric Macdonald polynomials}
The non-symmetric Macdonald polynomials can be defined as the common eigenfunctions of the Cherednik operators \cite{Che}, which are operators that belong to the double affine Hecke algebra and act on the ring $\mathbb Q(q,t)[x_1,\dots,x_N]$.  We now give the relevant definitions \cite{Mac2,Mar}.  
Let the exchange operator $K_{i,j}$ be such that
$$K_{i,j} f(\dots, x_i,\dots,x_j,\dots)= f(\dots, x_j,\dots,x_i,\dots)$$
We then define the generators $T_i$ of the affine Hecke algebra as
\begin{equation} \label{eqTi}
T_i=t+\frac{tx_i-x_{i+1}}{x_i-x_{i+1}}(K_{i,i+1}-1),\quad i=1,\ldots,N-1,
\end{equation}
and
$$
 {T_0=t+\frac{qtx_N-x_1}{qx_N-x_1}(K_{1,N}\tau_1\tau_N^{-1}-1)}\, ,
$$
where $\tau_i f(x_1,\dots, x_i,\dots,x_N)= f(x_1,\dots, qx_i,\dots,x_N)$ is the $q$-shift operator.
The $T_i$'s satisfy the relations  {($0\leq i\leq N-1$)}:
\begin{align*} &(T_i-t)(T_i+1)=0\nonumber\\
&T_iT_{i+1}T_i=T_{i+1}T_iT_{i+1}\nonumber\\
&T_iT_j=T_jT_i \, ,\quad i-j \neq \pm 1 \mod N
\end{align*}
where the indices are taken modulo $N$.
To define the Cherednik operators, we also need to introduce
the operator $\omega$ defined as:  
$$
\omega=K_{N-1,N}\cdots K_{1,2} \, \tau_1.
$$
We note that $\omega T_i=T_{i-1}\omega$ for $i=2,\dots,N-1$.

We are now in position to define the Cherednik operators:
$$
Y_i=t^{-N+i}T_i\cdots T_{N-1}\omega \bar T_1 \cdots \bar T_{i-1},
$$
where 
$$
\bar T_j:= T_j^{-1}=t^{-1}-1+t^{-1}T_j,
$$
 which follows from the quadratic relation satisfied by the generators
 of the  Hecke algebra.
 The Cherednik operators obey the following  relations:
 \begin{align} \label{tsym1}
T_i \, Y_i&= Y_{i+1}T_i+(t-1)Y_i\nonumber \\
T_i \, Y_{i+1}&= Y_{i}T_i-(t-1)Y_i\nonumber \\
T_i Y_j & = Y_j T_i \quad {\rm if~} j\neq i,i+1.
\end{align}
It can be easily deduced from these relations that
\begin{equation}\label{TYi}
(Y_i+Y_{i+1})T_i= T_i (Y_i+Y_{i+1}) \qquad {\rm and } \qquad (Y_i Y_{i+1}) T_i =
T_i (Y_i Y_{i+1}). 
\end{equation}

An element  $\eta=(\eta_1,\dots,\eta_N)$ of $\mathbb Z_{\geq 0}^{N}$ is called a (weak) composition with $N$ 
parts (or entries).
It will prove convenient to represent a composition by a Young (or Ferrers) diagram.  Given a composition $\eta$ with $N$ parts, let $\eta^+$ be the partition obtained by reordering the entries of $\eta$.  The diagram corresponding to $\eta$ is the Young diagram of $\eta^+$ with an $i$-circle (a circle filled with an $i$) added to the right of the row of size $\eta_i$ (if there are many rows of size $\eta_i$, the circles are ordered from top to bottom in increasing order).  For instance, given $\eta=(0,2,1,3,2,0,2,0,0)$, we have
 $$
\eta \quad \longleftrightarrow  \quad {\tableau[scY]{&& & \bl \cercle{4}\\& & \bl \cercle{2} \\& & \bl \cercle{5}\\ & & \bl \cercle{7} \\ & \bl \cercle{3} \\ \bl \cercle{1} \\ \bl \cercle{6} \\ \bl \cercle{8} \\ \bl \cercle{9}}}
 $$

The Cherednik operators $Y_i$'s commute among each others, $[Y_i,Y_j]=0$,
and can be simultaneously diagonalized. Their eigenfunctions are the
 (monic) non-symmetric Macdonald polynomials (labeled by compositions).
For $x=(x_1,\dots,x_N)$, 
the non-symmetric Macdonald polynomial $E_\eta(x;q,t)$ is 
the 
unique polynomial with rational coefficients in $q$ and $t$ 
that is triangularly related to the monomials (in the Bruhat ordering on compositions) 
\begin{equation} \label{orderE}
E_\eta(x_1,\dots,x_N;q,t)=x^\eta+\sum_{\nu\prec\eta}b_{\eta\nu}(q,t) \, x^\nu
\end{equation}
and that satisfies, for all $i=1,\dots,N$, 
\begin{equation} 
  Y_i E_\eta=\bar \eta_iE_\eta,\qquad\text{where}\qquad  \bar\eta_i =q^{\eta_i}t^{1-r_\eta(i)} \label{eigenvalY}
\end{equation}
  with $r_\eta(i)$ standing for the row (starting from the top) in which the $i$-circle appears in
  the diagram of $\eta$.
 The Bruhat order on compositions is defined as follows:
 $$
   \nu\prec\eta\quad \text{ {iff}}\quad \nu^+<\eta^+\quad \text{or} \quad \nu^+=\eta^+\quad \text{and}\quad w_\eta < w_\nu,
 $$
 where $w_{\eta}$ is the unique permutation of minimal length such 
that $\eta = w_{\eta} \eta^+$ ($w_{\eta}$ permutes the entries of $\eta^+$).
In the Bruhat order on the symmetric group, $w_\eta {<} w_\nu$ iff
$w_{\eta}$ can be obtained as a  {proper} subword  of $w_{\nu}$.  The Cherednik operators have a triangular action on monomials \cite{Mac2}, that is,
\begin{equation} \label{triangY}
Y_i x^\eta =   \bar \eta_i x^\eta + {\rm ~smaller~terms}
\end{equation}
where ``smaller terms'' means that the remaining monomials $x^\nu$  appearing in the expansion are such that $\nu \prec \eta$.

Finally, for an interval $[a,b]=\{a,a+1,\dots,b\} \subseteq \{1,\dots,N\} $, we introduce the $t$-symmetrization and antisymmetrization operators
\begin{equation} \label{antisym} 
\mathcal S_{a,b}^t = \sum_{\sigma \in \mathfrak S_{a,b}} T_\sigma \qquad {\rm and} \qquad
\mathcal A_{a,b} = \sum_{\sigma \in  \mathfrak S_{a,b}} (-1)^{\ell(\sigma)}K_\sigma
\end{equation}
where $ \mathfrak S_{a,b}$ stands for the permutation group of $[a,b]$, and where $K_\sigma$
is such that
$$
K_{\sigma} f (x_1,\dots,x_N) =f(x_{\sigma(1)}, \dots, x_{\sigma(N)})
$$
For simplicity, we use $[m]$ for the interval $[1,m]$, and $\mathcal A_m$ for
$\mathcal A_{1,m}$.

\subsection{Bisymmetric Macdonald polynomials}

A partition $\lambda=(\lambda_1,\lambda_2,\dots)$ of degree $|\lambda|$
is a vector of non-negative integers such that
$\lambda_i \geq \lambda_{i+1}$ for $i=1,2,\dots$ and such that
$\sum_i \lambda_i=|\lambda|$.  The length $\ell(\lambda)$
of $\lambda$ is the number of non-zero entries of $\lambda$.
Each partition $\lambda$ has an associated Ferrers diagram
with $\lambda_i$ lattice squares in the $i^{th}$ row,
from the top to bottom. Any lattice square in the Ferrers diagram
is called a cell (or simply a square), where the cell $(i,j)$ is in the $i$th row and $j$th
column of the diagram \cite{Mac}.

We will be concerned with the ring of bisymmetric functions $\mathscr R_{m,N} = \mathbb Q[x_1,\dots,x_N]^{ \mathfrak S_m \times  \mathfrak S_{m+1,N}}$. Bases of $\mathscr R_{m,N}$ are naturally indexed by pairs of partitions $(\lambda,\mu)$, where $\lambda$ (resp. $\mu$) is a partition whose length is at most $m$ (resp. $N-m$). We will adopt the language of symmetric functions in superspace \cite{BDLM2} and consider the bijection
$$
(\lambda,\mu) \longleftrightarrow (\Lambda^a;\Lambda^s) := (\lambda+\delta_m;\mu)
$$
where $\delta_m=(m-1,m-2,\dots,0)$. The superpartition $\Lambda= (\Lambda^a,\Lambda^s)$ thus consists of a partition $\Lambda^a$ with $m$ non-repeated entries (one of them possibly equal to zero) and a usual partition $\Lambda^s$ whose length is not larger than $N-m$.

We will let $\Lambda^*$ be the partition obtained by reordering the entries of the concatenation of $\Lambda^a$ and $\Lambda^s$. Similarly, we will let  $\Lambda^{\circledast}$ be the 
 partition obtained by reordering the entries of the concatenation of $\Lambda^a+(1^m)$ and $\Lambda^s$. 
A diagrammatic representation of $\La$ is given by 
the Ferrers diagram of $\La^*$ with circles added in the extra cells corresponding to $\Lambda^{\circledast}$.
For instance, if $\La=(\Lambda^a;\Lambda^s)
=(3,1,0;2,1)$,  we have $\Lambda^\circledast=(4,2,2,1,1)$ and $\Lambda^*
=(3,2,1,1)$, so that 
{\small
\begin{equation*} \label{exdia}
     \La^\cd=\quad{\tableau[scY]{&&&\\&\\&\\\\ \\ }}  \qquad
         \La^*=\quad{\tableau[scY]{&&\\&\\ \\ \\ }} \qquad
 \Longrightarrow\qquad      \La=\quad {\tableau[scY]{&&&\bl\tcercle{}\\&\\&\bl\tcercle{}\\ \\
     \bl\tcercle{}}}
 \end{equation*}}
\hspace{-0.3cm}

Given $\Lambda=(\Lambda^a;\Lambda^s)$, we consider that $\Lambda$ is the composition $\Lambda=(\Lambda_1,\Lambda_2,\dots,\Lambda_N)$ where $(\Lambda_1,\dots,\Lambda_m)=\Lambda^a$
and $(\Lambda_{m+1},\dots,\Lambda_N)=\Lambda^s$.  Define the composition
$\eta_{\Lambda}=(\Lambda_1,\dots,\Lambda_m,\Lambda_N,\Lambda_{N-1},\dots,\Lambda_{m+1})$.

We let $\Delta_I(x)$, for  $I=\{i_1,i_2,\dots,i_m\}$ with $i_1 <i_2 < \cdots < i_m$, stand for the  Vandermonde determinant $\prod_{1 \leq j <k \leq m}(x_{i_j}-x_{i_k})$. In the case where $I=[1,m]$, we use the shorter notation $\Delta_{m}(x)$ instead of $\Delta_{[1,m]}(x)$.  The $t$-Vandermonde determinant is $\Delta_m^t(x)=\prod_{1 \leq j <k \leq m}(tx_{j}-x_{k})$.

The bisymmetric Macdonald polynomial indexed by the superpartition $\Lambda$ is  then defined as
\begin{equation}
\mathcal P_\Lambda(x_1,\dots,x_N;q,t)= \frac{c_{\Lambda}(t)}{\Delta^t_m(x) } \, \mathcal A_{m} \mathcal S^t_{m,N } \, E_{\eta_{\Lambda}}(x_1,\dots,x_N;q,t)  
\end{equation}
with the normalization constant $c_{\Lambda}(t)$ such that
\begin{equation} \label{normalization}
\frac{1}{c_{\Lambda}(t)}=    \left( \prod_{i \geq 0} [n_{\Lambda^s}(i)]_{t^{-1}}! \right)
t^{(N-m)(N-m-1)/2} 
\end{equation}
where $n_{\Lambda^s}(i)$ is the number of entries in $\Lambda^s$ that are equal to $i$, and where
$$
[k]_q=\frac{(1-q)(1-q^2)\cdots (1-q^k)}{(1-q)^k}
$$
We observe that the normalization constant $c_{\Lambda}(t)$ is chosen such that the coefficient of
$$x_1^{\lambda_1} \cdots x_m^{\lambda_m}  x_{m+1}^{\mu_1} \cdots x_{N}^{\mu_{N-m}}$$
in $\mathcal P_\Lambda(x;q,t)$ is equal to 1, where $(\lambda,\mu) \longleftrightarrow \Lambda$.  This somewhat non trivial observation is a consequence of the connection with Macdonald polynomials in superspace which we are about to explain.

The bisymmetric Macdonald polynomials are in correspondence with the Macdonald polynomials in superspace.  This is seen in the following way.
The Macdonald polynomials in superspace provide a basis of the ring of symmetric polynomials in superspace 
$\mathbb Q[x_1,\dots,x_N;\theta_1,\dots,\theta_N]^{ \mathfrak S_N}$, where the variables $\theta_1,\dots,\theta_N$ are Grassmannian variables (that is such that $\theta_i \theta_j = -\theta_j \theta_i$ if $i \neq j$ and such that $\theta_i^2=0$), and where the action of $\mathfrak S_N$ is the diagonal one.  Any 
polynomial in superspace $F(x;\theta)$ can be written as
\begin{equation} \label{eqsP}
F(x;\theta) = \sum_{I \subseteq \{ 1,\dots,N \}; |I|=m} \theta_I \Delta_I(x) f_I(x)
\end{equation}
where, for $I=\{i_1,\dots,i_m\}$ with $i_1 < i_2 < \cdots < i_m$, we have
$\theta_I=\theta_{i_1}\cdots \theta_{i_m}$. Observe that by symmetry
a polynomial in superspace is completely determined by its coefficient $f_{\{1,\dots,m\}}(x)$, and moreover, that $f_{\{1,\dots,m\}}(x)$ needs to be bisymmetric.  The Macdonald polynomial in superspace $\mathcal P_\Lambda(x_1,\dots,x_N;\theta_1,\dots,\theta_N;q,t)$ is then simply the unique symmetric polynomial in superspace
of the form \eqref{eqsP} such that
$f_{\{1,\dots,m\}}(x)= \mathcal P_\Lambda(x_1,\dots,x_N;q,t) \in \mathscr R_{m,N}$. As such, $\mathscr R_{m,N}$ is isomorphic, as a vector space, to the subspace of $\mathbb Q[x_1,\dots,x_N;\theta_1,\dots,\theta_N]^{ \mathfrak S_N}$ made out of polynomials of homogeneous degree $m$ in the Grassmannian variables. We can thus immediately infer that the bisymmetric Macdonald polynomials form a basis of $\mathscr R_{m,N}$.
\section{Evaluations and symmetry}
The element $w$ of the symmetric group $\mathfrak S_N$ acts on a vector $(v_1,\dots,v_N) \in \mathbb Z^N$ as $w (v_1,\dots,v_N) =(v_{w^{-1}(1)},\dots,v_{w^{-1}(N)})$.

Let $w$ be the minimal length permutation such that  $w\Lambda = \Lambda^{*}$.  The positive evaluation $u^{+}_{\Lambda}$  is defined on any $f(x) \in \mathscr R_{m,N}$ as
\begin{equation} \label{poseval}
    u^+_{\Lambda}(f(x_1,\dots,x_N))=  f(q^{\Lambda_{w(1)}^{\circledast}}t^{1-w(1)},\dots,q^{\Lambda_{w(N)}^{\circledast}}t^{1-w(N)})
\end{equation}
while the negative evaluation $u^-_{\Lambda}$ is defined as 
\begin{equation} \label{negeval}
 u^-_{\Lambda}(f(x_1,\dots,x_N))=  f(q^{-\Lambda_{w(1)}^{*}}t^{w(1)-1},\dots,q^{-\Lambda_{w(N)}^{*}}t^{w(N)-1})
\end{equation}
\begin{remark} \label{remark1}
  In the case of $\Lambda_0=(\delta_m;\emptyset)$, where  $\delta_m=(m-1,m-2,\dots,0)$, the negative evaluation corresponds to an evaluation considered in \cite{BDLM,GL}.
To be more precise, if the symmetric function in superspace is $F(x,\theta)$ as given in \eqref{eqsP},  
then $u_{\Lambda_0}^-(f_{\{1,\dots,m\}}(x))=\varepsilon^m_{t^{N-m},q,t}\bigl(F(x,\theta)\bigr)$ in the language of \cite{GL}. 
\end{remark}
It turns out that we can use other permutations than the one of  minimal length when taking the evaluations.  We use the notation $\Lambda+(1^m)$ for the vector
$(\Lambda_1+1,\dots,\Lambda_m+1,\Lambda_{m+1},\dots,\Lambda_N)$.
\begin{lemma} 
  Let $\sigma$ be any permutation such that  $\sigma\Lambda = \Lambda^{*}$  and  $\sigma(\Lambda + (1^m)) = \Lambda^{\circledast}$.
 Then, when computing $u_\Lambda^+(f)$ and $u_\Lambda^-(f)$ for a bisymmetric function $f$, the permutation  $\sigma$ can be used  in \eqref{poseval} and \eqref{negeval} instead of the minimal permutation $w$.
  That is, when computing $u_\Lambda^+(f)$ and $u_\Lambda^-(f)$ for a bisymmetric function $f$, we have in this case that
$$
    u^+_{\Lambda}(f(x_1,\dots,x_N))=  f(q^{\Lambda_{\sigma(1)}^{\circledast}}t^{1-\sigma(1)},\dots,q^{\Lambda_{\sigma(N)}^{\circledast}}t^{1-\sigma(N)})
    $$
    and
$$
    u^-_{\Lambda}(f(x_1,\dots,x_N))=  f(q^{-\Lambda_{\sigma(1)}^{*}}t^{\sigma(1)-1},\dots,q^{-\Lambda_{\sigma(N)}^{*}}t^{\sigma(N)-1})
$$
\end{lemma}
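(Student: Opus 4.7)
The strategy is to write $\sigma = w\rho$ with $\rho := w^{-1}\sigma$, show that $\rho$ fixes $[1,m]$ pointwise and permutes $\{m{+}1,\dots,N\}$ within blocks of equal entries of $\Lambda^s$, and then invoke the bisymmetry of $f$ to conclude that the two specializations agree.

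First I would analyze $\rho$. Since $w\Lambda = \sigma\Lambda = \Lambda^*$, the permutation $\rho$ stabilizes $\Lambda$, i.e.\ $\Lambda_{\rho(i)} = \Lambda_i$ for every $i$. Exactly the same reasoning, applied to $\Lambda+(1^m)$ and $\Lambda^{\circledast}$, yields $(\Lambda+(1^m))_{\rho(i)} = (\Lambda+(1^m))_i$. Now fix $i\in[1,m]$ and set $j = \rho(i)$. The two identities give $\Lambda_j = \Lambda_i$ and $(\Lambda+(1^m))_j = \Lambda_i + 1$; if $j > m$, the second forces $\Lambda_j = \Lambda_i + 1$, contradicting the first, so $j\in[1,m]$. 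But the entries of $\Lambda^a$ are pairwise distinct, so $j = i$. Hence $\rho$ restricts to the identity on $[1,m]$ and to a permutation of $[m{+}1,N]$ preserving the multiset blocks of $\Lambda^s$.

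Next I would translate this into the evaluation statement. From $\sigma = w\rho$ we get $\sigma(i) = w(\rho(i))$ for every $i$. For $i\in[1,m]$ this reads $\sigma(i)=w(i)$, so the $i$-th substitution in \eqref{poseval} (resp.\ \eqref{negeval}) is literally unchanged. For $i\in[m{+}1,N]$, since $\rho$ restricts to a permutation of $\{m{+}1,\dots,N\}$, the tuple $\bigl(q^{\Lambda^{\circledast}_{\sigma(i)}}t^{1-\sigma(i)}\bigr)_{i=m+1}^{N}$ is a permutation of $\bigl(q^{\Lambda^{\circledast}_{w(i)}}t^{1-w(i)}\bigr)_{i=m+1}^{N}$. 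Because $f\in\mathscr R_{m,N}$ is symmetric in the variables $x_{m+1},\dots,x_N$, this reshuffling does not affect the value of $f$, proving the identity for $u_\Lambda^+$. The argument for $u_\Lambda^-$ is word-for-word identical, with $\Lambda^{\circledast}$ and $1-\sigma(i)$ replaced by $-\Lambda^*$ and $\sigma(i)-1$ respectively, since $\rho$ stabilizes $\Lambda$ itself.

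The only delicate point is the group-theoretic step above: verifying that the combined constraints $\sigma\Lambda = \Lambda^*$ and $\sigma(\Lambda+(1^m)) = \Lambda^{\circledast}$ cut out exactly the coset of $w$ modulo the stabilizer of $\Lambda^s$ inside $\mathfrak S_{m+1,N}$. Once this is established, no computation is needed, and bisymmetry finishes the proof for both $u_\Lambda^+$ and $u_\Lambda^-$ in one stroke.
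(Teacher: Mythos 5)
Your overall strategy is the same as the paper's: factor $\sigma=w\rho$, show $\rho\in\mathfrak S_m\times\mathfrak S_{m+1,N}$, and invoke bisymmetry of $f$. The endgame (the element-wise analysis of $\rho$, the observation that $\sigma(i)=w(i)$ for $i\le m$, and the reshuffling argument on $x_{m+1},\dots,x_N$) is correct. But there is a genuine gap at the start of your analysis of $\rho$: when you say ``exactly the same reasoning, applied to $\Lambda+(1^m)$ and $\Lambda^{\circledast}$, yields $(\Lambda+(1^m))_{\rho(i)}=(\Lambda+(1^m))_i$,'' you are silently using that the \emph{minimal} permutation $w$ satisfies $w(\Lambda+(1^m))=\Lambda^{\circledast}$. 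That identity is not part of the definition of $w$ (which is only required to satisfy $w\Lambda=\Lambda^*$), so it is not ``the same reasoning'' as for $\Lambda$, where $w\Lambda=\Lambda^*$ holds by definition. It is precisely the non-trivial preliminary fact that the paper proves first, using minimality: if $\Lambda_i=\Lambda_j$ with $i\le m<j$, minimality forces the tie to be broken with $w(i)<w(j)$, so after adding $1$ to the first $m$ entries the vector $w(\Lambda+(1^m))$ is still weakly decreasing and hence equals $\Lambda^{\circledast}$.

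This missing step is essential, not cosmetic: knowing only $\rho\Lambda=\Lambda$ does not prevent $\rho$ from swapping a position $i\le m$ with a position $j>m$ when $\Lambda_i=\Lambda_j$, and it is exactly the second stabilization $\rho(\Lambda+(1^m))=\Lambda+(1^m)$ that rules this out in your contradiction argument. Your closing paragraph acknowledges a ``delicate group-theoretic step'' but frames it as the coset characterization; for the constraints to cut out a coset \emph{of $w$} at all, one must first verify that $w$ itself satisfies both constraints, and that verification is what is absent. Once you insert the paper's three-line minimality argument, your proof is complete and coincides with the paper's.
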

\begin{proof}
  We first prove that if $w$ is the minimal permutation such that  $w\Lambda = \Lambda^{*}$ then $w(\Lambda + (1^m)) = \Lambda^{\circledast}$.  Suppose that $w$ is such a minimal permutation and suppose that  $\Lambda_i= \Lambda_j$ with $i \leq m$ and $j > m$.  Then, by minimality, $w^{-1}(i) < w^{-1}(j)$ which means that
$\Lambda_i+1$ occurs to the left of $\Lambda_j$ in $w(\Lambda + (1^m))$. We then  deduce immediately that $w(\Lambda + (1^m))=\Lambda^{\circledast}$.

  Now, let $\sigma$ be such that $\sigma\Lambda = \Lambda^{*}$  and  $\sigma(\Lambda + (1^m)) = \Lambda^{\circledast}$.  As we have just seen, the minimal permutation $w$ is also such that  $w\Lambda = \Lambda^{*}$  and  $w(\Lambda + (1^m)) = \Lambda^{\circledast}$.  Hence,  $w^{-1} \sigma$ acts as the identity on $\Lambda$ and $\Lambda+(1^m)$, which  means in particular that $w^{-1} \sigma \in \mathfrak S_m \times \mathfrak S_{m+1,N}$. Since $f$ is bisymmetric, we thus have $u_\Lambda^+ (K_{w^{-1} \sigma} f)=u_\Lambda^+ (f)$. Hence
  $$
u^+_{\Lambda}(f(x_{w^{-1}\sigma(1)},\dots,x_{w^{-1}\sigma(N)}))=  f(q^{\Lambda_{\sigma(1)}^{\circledast}}t^{1-\sigma(1)},\dots,q^{\Lambda_{\sigma(N)}^{\circledast}}t^{1-\sigma(N)})
  $$
which is equivalent to
\eqref{poseval} after performing the transformation $i \mapsto \sigma^{-1}w(i)$.
The proof in the case of $u^-_{\Lambda}$ is identical.
\end{proof}

We will say that  $(\Lambda,\sigma)$ \textbf{generates a superevaluation} whenever $\Lambda$ is a superpartition such that
\begin{enumerate}
\item $\sigma\Lambda = \Lambda^{*}$
\item $\sigma(\Lambda + (1^m)) = \Lambda^{\circledast}$
\end{enumerate}

\begin{lemma}\label{AutVal} If $f$ is a bisymmetric function then
\begin{equation} \label{aut1}
    f(Y^{-1}) \Delta_m^t(x) \mathcal  P_{\Lambda}(x;q,t) = u_\Lambda^-(f)  \Delta_m^t(x) \mathcal  P_{\Lambda}(x;q,t)
\end{equation}
Moreover, if $g(x_{m+1},\dots,x_N)$ is symmetric in the variables $x_{m+1},\dots,x_N$ then
\begin{equation} \label{aut2}
    g(Y_{m+1},\dots,Y_N) \Delta_m^t(x) \mathcal  P_{\Lambda}(x;q,t) = u_\Lambda^+(g)  \Delta_m^t(x) \mathcal  P_{\Lambda}(x;q,t)
\end{equation}
while if $g(x_{1},\dots,x_m)$ is symmetric in the variables $x_{1},\dots,x_m$  and of homogeneous degree $d$ then
\begin{equation} \label{aut3}
    g(Y_1,\dots,Y_m) \Delta_m^t(x) \mathcal  P_{\Lambda}(x;q,t) = q^{-d} u_\Lambda^+(g)  \Delta_m^t(x) \mathcal  P_{\Lambda}(x;q,t)
\end{equation}
\end{lemma}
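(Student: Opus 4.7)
The plan is to exploit the defining relation
\[
\Delta_m^t(x)\,\mathcal P_\Lambda(x;q,t) = c_\Lambda(t)\,\mathcal A_m\mathcal S^t_{m,N}E_{\eta_\Lambda}(x;q,t)
\]
together with the fact that $E_{\eta_\Lambda}$ is a joint eigenfunction of every Cherednik operator, with $Y_iE_{\eta_\Lambda}=\bar\eta_{\Lambda,i}E_{\eta_\Lambda}$. I would reduce all three identities to one structural claim: in the non-symmetric Macdonald expansion
\[
\mathcal A_m\mathcal S^t_{m,N}E_{\eta_\Lambda}=\sum_\mu a_\mu E_\mu,
\]
every composition $\mu$ with $a_\mu\neq 0$ has the same head and tail multisets of eigenvalues as $\eta_\Lambda$, namely $\{\bar\mu_1,\ldots,\bar\mu_m\}=\{\bar\eta_{\Lambda,1},\ldots,\bar\eta_{\Lambda,m}\}$ and $\{\bar\mu_{m+1},\ldots,\bar\mu_N\}=\{\bar\eta_{\Lambda,m+1},\ldots,\bar\eta_{\Lambda,N}\}$.

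Granting this structural claim, each of \eqref{aut1}, \eqref{aut2} and \eqref{aut3} follows by acting with the relevant symmetric (or bisymmetric) polynomial in $Y$ or $Y^{-1}$: the eigenvalue is a function of the multiset alone, hence the same on every $E_\mu$ in the expansion. I would then match this common eigenvalue to the evaluations using the circle-placement rule on the diagram of $\eta_\Lambda$: the multisets $\{u_\Lambda^-(x_i): i\le m\}$ and $\{u_\Lambda^-(x_i): i>m\}$ coincide with $\{\bar\eta_{\Lambda,i}^{-1}: i\le m\}$ and $\{\bar\eta_{\Lambda,i}^{-1}: i>m\}$ respectively, while the multisets $\{u_\Lambda^+(x_i): i\le m\}$ and $\{u_\Lambda^+(x_i): i>m\}$ coincide with $\{q\,\bar\eta_{\Lambda,i}: i\le m\}$ and $\{\bar\eta_{\Lambda,i}: i>m\}$ (the extra factor of $q$ per index $i\le m$ coming from the identity $\Lambda^\circledast_{w(i)}=\Lambda_i+1$ already exploited in the preceding lemma). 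This immediately yields \eqref{aut1} and \eqref{aut2} with no correction and produces the factor $q^{-d}$ in \eqref{aut3} when $g$ is homogeneous of degree $d$ in the first $m$ variables.

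The hard part will be the structural claim itself. For $\mathcal S^t_{m,N}$ it is standard: the Hecke generators $T_j$ with $m+1\le j<N$ produce $E$-expansions whose compositions are rearrangements of $\eta_\Lambda$ in positions $m+1,\ldots,N$, each with the same tail multiset. For $\mathcal A_m=\sum_{\sigma\in\mathfrak S_m}(-1)^{\ell(\sigma)}K_\sigma$ the argument is more delicate because the $K_\sigma$ are pure permutation operators outside the affine Hecke algebra; a direct check shows $[Y_j,K_{i,i+1}]\neq 0$ when $j>m$ and $i<m$, so no operator-level commutation of $g(Y_{m+1},\ldots,Y_N)$ with $\mathcal A_m$ is available. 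The key point is that any $E_\rho$ with $\rho^+<\Lambda^*$ that could enter the expansion must have a repeated entry among $\rho_1,\ldots,\rho_m$, and since $\Lambda^a=(\Lambda_1,\ldots,\Lambda_m)$ is strictly decreasing such contributions cancel in the alternating sum defining $\mathcal A_m$. The cleanest route to making this rigorous is to rewrite $\mathcal A_m$ in terms of the Hecke antisymmetrizer $\sum_{\sigma\in\mathfrak S_m}(-t)^{-\ell(\sigma)}T_\sigma$ via an identity linking them through $\Delta_m(x)$ and $\Delta_m^t(x)$; being inside the Hecke algebra, this operator commutes with symmetric polynomials in $Y_1,\ldots,Y_m$ and in $Y_{m+1},\ldots,Y_N$ automatically, and the desired eigenvalue identities then follow at the operator level once the Vandermonde factors are absorbed into $\Delta_m^t(x)\mathcal P_\Lambda$.
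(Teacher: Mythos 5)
Your proposal is correct and, once you settle on your ``cleanest route,'' it is essentially the paper's own proof: write $\Delta_m^t(x)\mathcal P_\Lambda$ via the Hecke antisymmetrizer $\mathcal A^t_m$ and the symmetrizer $\mathcal S^t_{m+1,N}$, commute the bisymmetric polynomial in the $Y$'s (or $Y^{-1}$'s) through them using \eqref{TYi}, apply the eigenvalue equation for $E_{\eta_\Lambda}$, and match $\bar\eta_i^{\pm1}$ with $u_\Lambda^\mp$ via $\Lambda^\circledast_{w(i)}=\Lambda^*_{w(i)}+1$ for $i\le m$, which is exactly where the $q^{-d}$ in \eqref{aut3} comes from. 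The intermediate ``structural claim'' about the $E_\mu$-expansion and the cancellation of repeated entries is an unnecessary detour that your final operator-level argument supersedes, and your observation that the plain antisymmetrizer $\mathcal A_m$ must be traded for its Hecke counterpart is precisely the point the paper handles implicitly.
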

\begin{proof}
We first prove \eqref{aut1}. We have that $Y_i^{-1}E_{\eta} = \bar \eta_i^{-1} E_{\eta}$,
where we recall that $\bar \eta_i= q^{\eta_i} t^{1-r_\eta(i)}$. Using the fact that $f$ is bisymmetric, we then have
\begin{equation*}
\begin{array}{ll}
  f(Y_1^{-1},\dots,Y_N^{-1}) \Delta_m^t(x) \mathcal  P_\Lambda(x;q,t) &= \displaystyle f(Y_1^{-1},\dots,Y_N^{-1})  {c_\Lambda(t)}   \mathcal A_{1,m}^t \mathcal S^t_{m+1,,N}  E_{\eta}  \\
  &= \displaystyle  {c_\Lambda(t)}   \mathcal A_{1,m}^t \mathcal S^t_{m+1,,N} f(Y_1^{-1},\dots,Y_N^{-1}) E_{\eta}  \\
   &= \displaystyle  {c_\Lambda(t)}    \mathcal A_{1,m}^t \mathcal S^t_{m+1,,N} f(\bar \eta_1^{-1},\dots, \bar \eta_N^{-1}) E_{\eta}  \\
     &=   f(\bar \eta_1^{-1},\dots, \bar \eta_N^{-1})  \Delta_m^t(x) \mathcal  P_\Lambda(x;q,t)
\end{array}
\end{equation*}
It thus only remains to show that the specialization
$x_i = \overline{\eta}^{-1}_i$ corresponds to the negative evaluation.  We have that $r_\eta(i)=w(i)$, where $w$ is the minimal permutation such that $w\eta=\Lambda^*$. Therefore, $x_i=  \overline{\eta}^{-1}_i=q^{-\eta_i}t^{w(i)-1}$.
By definition, we also have that $\Lambda_{i}^*=\eta_{w^{-1}(i)}$ or equivalently, that $\Lambda_{w(i)}^*=\eta_{i}$.  Hence, the specialization $x_i = \overline{\eta}^{-1}_i$ amounts to  $x_{i} =q^{-\Lambda^*_{w(i)}}t^{w(i)-1}$, as wanted.

As for the proof of \eqref{aut2} and \eqref{aut3}, observe that 
$w^{-1} \Lambda^\circledast = \Lambda + (1^m)$ and $w^{-1} \Lambda^* = \Lambda$ imply that $\Lambda^\circledast_{w(i)}=\Lambda^*_{w(i)}$ for $i\in \{m+1,\dots,N\}$ while   $\Lambda^\circledast_{w(i)}=\Lambda^*_{w(i)}+1$ for $i\in \{1,\dots,m\}$.
Proceeding as in the proof of \eqref{aut1}, we get straightforwardly that
the specialization is at $x_i=\overline{\eta}_i$ instead of $x_i=\overline{\eta}^{-1}_i$.  We can then immediately deduce that
\eqref{aut2} and \eqref{aut3} hold from our previous observation.    
\end{proof}

\subsection{The double affine Hecke algebra and a symmetric pairing}
We will introduce in this subsection a symmetric pairing associated to the evaluation $u_{\Lambda_0}^-$ that generalizes the symmetric pairing in the double affine Hecke algebra.  We first explain  the symmetric pairing in the double affine Hecke algebra.

The  double affine Hecke algebra has a natural basis (over $\mathbb Q(q,t)$) given by the elements of  the form (see for instance (4.7.5) in \cite{Mac2})
 \begin{equation} \label{basis}
x^\eta T_w Y^{-\gamma}
 \end{equation}  
for all $\eta,\gamma \in \mathbb Z^N$ and all permutations $w \in \mathfrak S_N$.
The map
$\varphi$ defined by \cite{Mac2} 
\begin{equation} \label{anti}
\varphi \left(x^\eta T_w Y^{-\gamma} \right) = x^\gamma T_{w^{-1}} Y^{-\eta} 
 \end{equation}  
is an anti-automorphism. Notice that we have in particular that
\begin{equation*}
\varphi(x^\eta)= Y^{-\eta} \qquad {\rm and } \qquad \varphi(Y^{-\gamma})=x^\gamma
\end{equation*}  
The evaluation map $\Theta$ is then defined as
\begin{equation}
\Theta (a) = u_{\emptyset}^-(a \cdot 1) 
\end{equation}
where $u_{\emptyset}^-(f(x_1,\dots,x_N)=f(1,t^{1},\dots,t^{N-1})$ for any Laurent polynomials $f(x)$.  For instance, using
$ f(Y^{-1}) \cdot 1=u_\emptyset^-(f)$ and $T_w \cdot 1=t^{\ell(w)}$,
we have
\begin{align*}
  \Theta \bigl(g(x) T_w f(Y^{-1})\bigr)& =  u_{\emptyset}^-\bigl(g(x) T_w f(Y^{-1}) \cdot 1 \bigr) & \\ & =
  u_{\emptyset}^-(f)  u_{\emptyset}^- \bigl(g(x)  T_w \cdot 1 \bigr)\\
  &=  t^{\ell(w)} u_{\emptyset}^-(f)  u_{\emptyset}^- \bigl(g  \bigr)
\end{align*}
Using the fact that $\ell(w)=\ell(w^{-1})$, we see that
\begin{equation*}
  \Theta \bigl(g(x) T_w f(Y^{-1})\bigr)=
  \Theta \bigl(f(x) T_{w^{-1}} g(Y^{-1})\bigr)=  \Theta \circ \varphi \bigl(g(x) T_w f(Y^{-1})\bigr)
\end{equation*}  
Since the  basis \eqref{basis} is given by elements of the form $g(x) T_w f(Y^{-1})$, we thus have established that
\begin{equation}
\Theta=\Theta \circ \varphi
\end{equation}  
For all Laurent polynomials $f(x),g(x)$, let the pairing $[f,g]$ be defined as
\begin{equation} \label{pairing}
  [f,g]=\Theta\bigl(f(Y^{-1}) g(x)\bigr)
\end{equation}
  Using the previous relation and the fact that $\varphi$ is an anti-automorphism, we immediately get the symmetry of the pairing
\begin{equation*}
[f,g]= \Theta\bigl(f(Y^{-1}) g(x)\bigr)=  \Theta \circ \varphi \bigl(f(Y^{-1}) g(x)\bigr) =\Theta\bigl(g(Y^{-1}) f(x)\bigr) =[g,f]
\end{equation*}

Now, consider the evaluation $u_{\Lambda_0}^-(f)$ given explicitly as
$$
u_{\Lambda_0}^-(f(x_1,x_2,\dots,x_{m},x_{m+1},\dots, x_N))= f(q^{1-m},q^{2-m}t,\dots,q^0 t^{m-1}, t^m,\dots,t^{N-1})
$$
on any Laurent polynomial $f$, where $\Lambda_0$ is such as defined in Remark~\ref{remark1}.

Our goal is to define 
a pairing associated to the evaluation $u_{\Lambda_0}^-$. 
We first need to extend the map  $\Theta$.  Let $\Theta_m$ be such that
 \begin{equation}
\Theta_m (a)= u_{\Lambda_0}^- \bigl( a \cdot E_{\delta_m}(x;q,t)\bigr ) 
\end{equation}
where  $E_{\delta_m}(x;q,t)$ is the non-symmetric Macdonald polynomial indexed by the composition
\begin{equation*}
\delta_m=(m-1,m-2,\dots,1,0,0,\dots,0)
\end{equation*}
Observe that $f(Y^{-1}) \cdot E_{\delta_m}(x;q,t)= u_{\Lambda_0}^-(f)  E_{\delta_m}(x;q,t)$.
\begin{lemma}  Let $\varphi$ be the anti-automorphism defined in \eqref{anti}.
We have that
\begin{equation}
 \Theta_m = \Theta_m \circ \varphi
 \end{equation}
\end{lemma}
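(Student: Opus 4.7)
The plan is to follow the proof of $\Theta = \Theta \circ \varphi$ given just above, with the evaluation at $1$ replaced throughout by the evaluation at $E_{\delta_m}(x;q,t)$. By linearity it suffices to verify the identity on a generic basis element $a = g(x) T_w f(Y^{-1})$ of the DAHA. The crucial input, noted immediately before the lemma, is the eigenvalue identity $f(Y^{-1}) E_{\delta_m} = u_{\Lambda_0}^-(f) E_{\delta_m}$, which holds because $E_{\delta_m}$ is a simultaneous eigenfunction of the $Y_i^{-1}$ with eigenvalues $\bar\delta_m^{(i)\,-1} = q^{-\delta_m^{(i)}} t^{i-1}$ that exactly match the coordinates of the $u_{\Lambda_0}^-$-specialization point. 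Combining this with the ring-homomorphism property of $u_{\Lambda_0}^-$ on polynomials, a direct computation yields
\begin{equation*}
\Theta_m\bigl(g(x) T_w f(Y^{-1})\bigr) = u_{\Lambda_0}^-(f)\, u_{\Lambda_0}^-(g)\, u_{\Lambda_0}^-\bigl(T_w E_{\delta_m}\bigr),
\end{equation*}
and the same reduction applied to $\varphi(a) = f(x) T_{w^{-1}} g(Y^{-1})$ gives the analogous expression with $T_w$ replaced by $T_{w^{-1}}$. Thus the lemma reduces to the single symmetry
\begin{equation*}
u_{\Lambda_0}^-\bigl(T_w E_{\delta_m}\bigr) = u_{\Lambda_0}^-\bigl(T_{w^{-1}} E_{\delta_m}\bigr) \quad \text{for all } w \in \mathfrak S_N. \qquad (\star)
\end{equation*}

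For $(\star)$, I would first dispatch the easy case. At the specialization point $\vec x_0 = (q^{1-m}, q^{2-m}t, \ldots, q^{-1}t^{m-2}, t^{m-1}, t^m, \ldots, t^{N-1})$ the coordinates satisfy $x_{i+1} = t x_i$ for every $i \geq m$, so the rational factor $\tfrac{tx_i - x_{i+1}}{x_i - x_{i+1}}$ in the definition of $T_i$ vanishes under $u_{\Lambda_0}^-$; consequently $u_{\Lambda_0}^-(T_i F) = t\, u_{\Lambda_0}^-(F)$ for every polynomial $F$. Hence $(\star)$ holds trivially whenever $w \in \mathfrak S_{m+1, N}$ (generated by $s_m, \ldots, s_{N-1}$), in which case both sides equal $t^{\ell(w)} u_{\Lambda_0}^-(E_{\delta_m})$.

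The main obstacle is the case where $w$ uses generators $s_i$ with $i < m$, since there the simplification above fails. I would proceed by induction on $\ell(w)$, applying the Knop--Sahi formula $T_i E_\eta = \alpha_i^\eta E_\eta + \beta_i^\eta E_{s_i \eta}$, which is valid whenever $\eta_i > \eta_{i+1}$ and persists through each iteration starting from $\eta = \delta_m$, to expand $T_w E_{\delta_m}$ as an explicit linear combination of non-symmetric Macdonald polynomials $E_{w' \delta_m}$ with coefficients in $\mathbb Q(q,t)$. The identity $(\star)$ then becomes a combinatorial statement about these coefficients together with the specialization values $u_{\Lambda_0}^-(E_{w' \delta_m})$. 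I expect to verify it either directly using the braid and quadratic Hecke relations (which the Knop--Sahi coefficients must satisfy, forcing a symmetric form), or more conceptually by invoking Cherednik's evaluation/duality formula for the non-symmetric Macdonald polynomials to exhibit $u_{\Lambda_0}^-(E_{w' \delta_m})$ in a form that is manifestly invariant under $w \leftrightarrow w^{-1}$.
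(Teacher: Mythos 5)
Your reduction of the lemma to the single identity $(\star)$, namely $u_{\Lambda_0}^-(T_w E_{\delta_m}) = u_{\Lambda_0}^-(T_{w^{-1}} E_{\delta_m})$ for all $w$, is correct and coincides with the first half of the paper's argument (the paper phrases it as $\Theta_m(T_w)=\Theta_m(T_{w^{-1}})$). The observation that $T_i$ acts as multiplication by $t$ under $u_{\Lambda_0}^-$ for $i\geq m$ is also correct, though it only disposes of $w$ in the parabolic subgroup generated by $s_m,\dots,s_{N-1}$ and gives no leverage on a general $w$, since a reduced word for $w$ cannot in general be split so that the discarded generators match up on the $T_{w^{-1}}$ side.

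The genuine gap is that $(\star)$ for general $w$ is never actually proven: you expand $T_wE_{\delta_m}$ via Knop--Sahi recursions and then say you ``expect to verify'' the resulting combinatorial identity, either from the Hecke relations or from Cherednik duality. That expectation is the entire content of the lemma, and neither route is executed. Moreover the proposed induction is shakier than you suggest: after one application of $T_i$ the intermediate compositions $w'\delta_m$ need not satisfy $\eta_i>\eta_{i+1}$ at the next step (and $\delta_m$ itself has repeated zero entries), so different branches of the recursion with different coefficients intervene, and the claim that the sum $\sum_\nu c_{w,\nu}\,u_{\Lambda_0}^-(E_\nu)$ is invariant under $w\mapsto w^{-1}$ is far from manifest. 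The paper avoids all of this by a bootstrapping trick: setting $F_w(x)=T_wE_{\delta_m}(x)$, one evaluates $\Theta\bigl(F_w(Y^{-1})E_{\delta_m}(x)\bigr)$ in two ways. On one hand the eigenvalue property gives $\Theta_m(T_w)\,u_\emptyset^-(E_{\delta_m})$; on the other hand, applying the already-established symmetry $\Theta=\Theta\circ\varphi$ three times converts $F_w(Y^{-1})E_{\delta_m}(x)$ into $E_{\delta_m}(Y^{-1})T_wE_{\delta_m}(x)$, then replaces $T_w$ by $T_{w^{-1}}$, then converts back to $F_{w^{-1}}(Y^{-1})E_{\delta_m}(x)$, yielding $\Theta_m(T_{w^{-1}})\,u_\emptyset^-(E_{\delta_m})$. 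Dividing by $u_\emptyset^-(E_{\delta_m})\neq 0$ gives $(\star)$ with no combinatorics at all. You should either adopt this argument or actually carry out one of your two proposed verifications; as written, the proof is incomplete at its central step.
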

\begin{proof}
We first show that
\begin{equation} \label{Thetam}
\Theta_m (T_w) = \Theta_m \circ \varphi (T_w)= \Theta_m (T_{w^{-1}})
\end{equation}  
for any permutation $w \in \mathfrak S_N$. 
Let $F_w(x)= T_w E_{\delta_m}(x;q,t)\cdot 1$.  As this is a polynomial in $x_1,\dots,x_N$, we can consider the quantity
\begin{equation} \label{eqTw}
  \Theta\bigl( F_w(Y^{-1}) E_{\delta_m}(x;q,t) \bigr) = u_{\Lambda_0}^-(F_w) u_\emptyset^-( E_{\delta_m}) = \Theta_m (T_w) u_\emptyset^-(E_{\delta_m}) 
\end{equation}
since  $u_{\Lambda_0}^-(F_w)= u_{\Lambda_0}^-(T_w \cdot  E_{\delta_m}(x;q,t))= \Theta_m (T_w)$.
From  $\Theta = \Theta \circ \varphi$, we also have
\begin{equation*}
 \Theta\bigl( F_w(Y^{-1}) E_{\delta_m}(x;q,t) \bigr)= \Theta\bigl( E_{\delta_m}(Y^{-1};q,t) F_w(x)  \bigr)= \Theta\bigl( E_{\delta_m}(Y^{-1};q,t) T_w E_{\delta_m}(x)  \bigr) 
\end{equation*}  
Using again $\Theta = \Theta \circ \varphi$, we can then replace $w$ by $w^{-1}$ in the term on the right to get
\begin{equation*}
 \Theta\bigl( F_w(Y^{-1}) E_{\delta_m}(x;q,t) \bigr)=  \Theta\bigl( E_{\delta_m}(Y^{-1};q,t) T_{w^{-1}} E_{\delta_m}(x)  \bigr) = \Theta\bigl( E_{\delta_m}(Y^{-1};q,t) F_{w^{-1}}(x)  \bigr)
\end{equation*}  
Using  $\Theta = \Theta \circ \varphi$ one last time, we can transform the term on the right to obtain
\begin{equation} \label{eqTwinv}
 \Theta\bigl( F_w(Y^{-1}) E_{\delta_m}(x;q,t) \bigr)= \Theta\bigl( F_{w^{-1}}(Y^{-1}) E_{\delta_m}(x;q,t) \bigr) = \Theta_m (T_{w^{-1}}) u_\emptyset^-(E_{\delta_m}) 
\end{equation}
Comparing \eqref{eqTw} and \eqref{eqTwinv}, we can thus conclude that
\eqref{Thetam} holds given that $u_{\Lambda_0}^- (E_{\delta_m})$ is not equal to 0 (it can be deduced easily from the fact that $E_\eta(x;q,1)=x^\eta$ for any $\eta$).

We can now prove that $ \Theta_m = \Theta_m \circ \varphi$ holds in general.
Recall that any element of the affine Hecke algebra can be written in the form $f(x) T_w g(Y^{-1})$,
where $f(x),g(x)$ are Laurent polynomials.
On the one hand, we have
\begin{equation*}
  \Theta_m \bigl(f(x) T_w g(Y^{-1}) \bigr)= u_{\Lambda_0}^-(g)
  \Theta_m \bigl(f(x) T_w  \bigr) =
u_{\Lambda_0}^-(g) u_{\Lambda_0}^-(f)
\Theta_m ( T_w )
\end{equation*}
while on the other hand, we have
\begin{equation*}
  \Theta_m \circ \varphi\bigl(f(x) T_w g(Y^{-1}) \bigr)=  \Theta_m \bigl(g(x) T_{w^{-1}} f(Y^{-1}) \bigr)=
  u_{\Lambda_0}^-(f) u_{\Lambda_0}^-(g) \Theta_m ( T_{w^{-1}} )
\end{equation*}
Since we have previously established that $\Theta_m ( T_{w^{-1}} )= \Theta_m ( T_{w} )$, we conclude from the previous two equations that  $ \Theta_m = \Theta_m \circ \varphi$.
\end{proof}

We now define our new pairing.  For any Laurent polynomials $f,g$ symmetric in the variables $x_1,\dots,x_m$, let
\begin{equation}
[f,g]_m = u_{\Lambda_0}^- \bigl(f(Y^{-1}) g(x)  \Delta_m^t(x) \bigr)  
\end{equation}
This new pairing is again symmetric.
\begin{lemma} \label{SimGen} If $f$ and $g$ are two Laurent polynomials that are symmetric in the variables $x_1, \ldots, x_m$, then
  $$[f,g]_m = [g,f]_m$$
\end{lemma}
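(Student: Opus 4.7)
The plan is to re-express the pairing $[f,g]_m$ as $\Theta_m$ applied to a suitable element of the (extended) double affine Hecke algebra, and then exploit the symmetry $\Theta_m=\Theta_m\circ\varphi$ together with commutation properties implied by $f$ and $g$ being symmetric in $x_1,\ldots,x_m$. Since the bisymmetric Macdonald polynomial for $\Lambda_0=(\delta_m;\emptyset)$ satisfies $\mathcal{P}_{\Lambda_0}(x;q,t)=1$, the defining formula yields $\Delta_m^t(x)=c_{\Lambda_0}(t)\,\mathcal{A}_m\mathcal{S}_{m+1,N}^t E_{\delta_m}(x;q,t)$, so that $[f,g]_m/c_{\Lambda_0}(t)=\Theta_m\bigl(f(Y^{-1})g(x)\,\mathcal{A}_m\mathcal{S}_{m+1,N}^t\bigr)$. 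Because $\varphi(T_w)=T_{w^{-1}}$ and $\sigma\mapsto\sigma^{-1}$ preserves length, each of $\mathcal{A}_m$ and $\mathcal{S}_{m+1,N}^t$ is $\varphi$-stable; since they commute (disjoint supports), $\varphi(\mathcal{A}_m\mathcal{S}_{m+1,N}^t)=\mathcal{A}_m\mathcal{S}_{m+1,N}^t$, and applying $\Theta_m=\Theta_m\circ\varphi$ gives $[f,g]_m/c_{\Lambda_0}(t)=\Theta_m\bigl(\mathcal{A}_m\mathcal{S}_{m+1,N}^t\,g(Y^{-1})f(x)\bigr)$.

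The key observation is that $\mathcal{S}_{m+1,N}^t$ reduces to a scalar under $u_{\Lambda_0}^-$: at the $u_{\Lambda_0}^-$-specialization one has $tx_i^\star=x_{i+1}^\star$ for all $i\ge m+1$, so $T_i=t+\frac{tx_i-x_{i+1}}{x_i-x_{i+1}}(K_{i,i+1}-1)$ reduces to multiplication by $t$ upon evaluation, giving $u_{\Lambda_0}^-(T_i\phi)=t\,u_{\Lambda_0}^-(\phi)$ for any polynomial $\phi$ and any such $i$, and inductively $u_{\Lambda_0}^-(\mathcal{S}_{m+1,N}^t\phi)=[N-m]_t\,u_{\Lambda_0}^-(\phi)$. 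Combined with $[\mathcal{A}_m,\mathcal{S}_{m+1,N}^t]=0$, the displayed identity reduces to
\[
\Theta_m\bigl(\mathcal{A}_m g(Y^{-1}) f(x)\bigr)=\Theta_m\bigl(\mathcal{A}_m f(Y^{-1}) g(x)\bigr),
\]
and the analogous computation for $[g,f]_m$ shows this equality is equivalent to $[f,g]_m=[g,f]_m$.

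The main obstacle is establishing this last equality. The key inputs are that when $h$ is symmetric in $x_1,\ldots,x_m$, both multiplication $M_h$ (since $K_\sigma h=h$ for $\sigma\in\mathfrak{S}_m$) and $h(Y^{-1})$ (via the Lusztig-type relations~\eqref{TYi}, which imply that any symmetric polynomial in $Y_1,\ldots,Y_m$ commutes with $T_1,\ldots,T_{m-1}$, together with $T_iY_j=Y_jT_i$ for $j\neq i,i+1$) commute with $\mathcal{A}_m$. Granted these commutations, $A=g(Y^{-1})f(x)$ and $\varphi(A)=f(Y^{-1})g(x)$ both commute with $\mathcal{A}_m$; setting $\Xi(A):=\Theta_m(\mathcal{A}_m A)$ and applying $\Theta_m=\Theta_m\circ\varphi$ with $\varphi(\mathcal{A}_m)=\mathcal{A}_m$ yields
\[
\Xi(A)=\Theta_m(\mathcal{A}_m A)=\Theta_m(\varphi(A)\mathcal{A}_m)=\Theta_m(\mathcal{A}_m\varphi(A))=\Xi(\varphi(A)),
\]
which gives the required $\Theta_m(\mathcal{A}_m g(Y^{-1})f(x))=\Theta_m(\mathcal{A}_m f(Y^{-1})g(x))$ and completes the proof.
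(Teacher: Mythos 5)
Your overall strategy coincides with the paper's: sandwich $f(Y^{-1})g(x)$ between antisymmetrizers that are (i) fixed by the anti-automorphism $\varphi$ and (ii) commute with $f(Y^{-1})$ and $g(x)$, and then invoke $\Theta_m=\Theta_m\circ\varphi$. The gap is that you use the wrong antisymmetrizer, and both (i) and (ii) fail for it. The operator $\mathcal A_m=\sum_{\sigma\in\mathfrak S_m}(-1)^{\ell(\sigma)}K_\sigma$ of \eqref{antisym} is built from the exchange operators $K_\sigma$, which are not elements of the double affine Hecke algebra spanned by the $x^\eta T_w Y^{-\gamma}$ (expressing $K_{i,i+1}$ through $T_i$ requires the non-polynomial coefficient $(x_i-x_{i+1})/(tx_i-x_{i+1})$). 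Hence $\varphi(\mathcal A_m)$ is not defined by \eqref{anti}; the fact that $\sigma\mapsto\sigma^{-1}$ preserves length gives $\varphi$-stability of sums of $T_\sigma$'s such as $\mathcal S^t_{m+1,N}$, but says nothing about $\sum(-1)^{\ell(\sigma)}K_\sigma$. Likewise, the relations \eqref{TYi} show that a symmetric polynomial in $Y_1,\dots,Y_m$ commutes with $T_1,\dots,T_{m-1}$, hence with any element of the Hecke algebra of $\mathfrak S_m$ --- but not with the $K_\sigma$, and indeed $g(Y^{-1})$ does not commute with $\mathcal A_m$ in general. So the two pivotal steps, $\varphi(\mathcal A_m\mathcal S^t_{m+1,N})=\mathcal A_m\mathcal S^t_{m+1,N}$ and $[\,g(Y^{-1}),\mathcal A_m]=0$, are unjustified and in fact false as stated.

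The repair is precisely what the paper does: replace $\mathcal A_m\mathcal S^t_{m+1,N}$ by the Hecke $t$-antisymmetrizer $\mathcal A^t_m=\sum_{\sigma\in\mathfrak S_m}(-1/t)^{\ell(\sigma)}T_\sigma$. This element lies in the Hecke algebra, satisfies $\varphi(\mathcal A^t_m)=\mathcal A^t_m$, commutes with $f(Y^{-1})$ and $g(x)$ when $f,g$ are symmetric in $x_1,\dots,x_m$, and produces the $t$-Vandermonde directly: since $T_i\mathcal A^t_m=-\mathcal A^t_m$, the image of $\mathcal A^t_m$ consists of $\Delta_m^t(x)$ times symmetric polynomials, whence $\Delta_m^t(x)=c_m(q,t)\,\mathcal A^t_m E_{\delta_m}(x;q,t)$ by degree count; together with $\mathcal A^t_m\mathcal A^t_m=d_m(t)\mathcal A^t_m$ this identifies $\Theta_m(\mathcal A^t_m f(Y^{-1})g(x)\mathcal A^t_m)$ with $d_m(t)c_m(q,t)[f,g]_m$, and the symmetry follows from $\Theta_m=\Theta_m\circ\varphi$. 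Two further remarks: your reduction of $\mathcal S^t_{m+1,N}$ to a scalar under $u_{\Lambda_0}^-$ is correct in spirit (the scalar is $[N-m]_t!$, not $[N-m]_t$) but becomes unnecessary once the correct antisymmetrizer is used; and your identity $\Delta_m^t(x)=c_{\Lambda_0}(t)\mathcal A_m\mathcal S^t_{m+1,N}E_{\delta_m}$ is itself dubious for the same reason, since plain antisymmetrization of a degree-$\binom{m}{2}$ polynomial yields a multiple of $\Delta_m(x)$ rather than of $\Delta_m^t(x)$.
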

\begin{proof}
Let $\mathcal A^t_m$ be the $t$-antisymmetrizer in the first $m$ variables
\begin{equation}
\mathcal A^t_m = \sum_{\sigma \in \mathfrak S_m} (-1/t)^{\ell(\sigma)} T_\sigma
\end{equation}
We have that $T_i \mathcal A^t_m  =-  \mathcal A^t_m$ for any $i=1,\dots,m-1$. Hence, for every polynomial $f(x)$, we get that
$\mathcal A^t_m f(x)=\Delta_m^t(x) g(x)$, where $g(x)$ is a polynomial  symmetric in $x_1,\dots,x_m$. In particular, by degree consideration, we have that
$$
\Delta_m^t(x) = c_m(q,t) \mathcal A^t_m E_{\delta_m}(x;q,t)
$$
for some non-zero constant $c_m(q,t)$  (at $t=1$,  the r.h.s. produces the usual Vandermonde determinant, so $c_m(q,1)=1\neq 0$). It is also immediate that
 $\mathcal A^t_m  \mathcal A^t_m = d_m(t) \mathcal A^t_m $ where $d_m(t)= \sum_{\sigma \in \mathfrak S_m} (1/t)^{\ell(\sigma)}$ is a non-zero constant.  With these relations in hand, we can
 relate $[f,g]_m$ to $\Theta_m$.  Indeed, we have
\begin{align}
  \Theta_m(\mathcal A^t_m  f(Y^{-1}) g(x) \mathcal A^t_m  ) & = d_m(t) \, \Theta_m(f(Y^{-1}) g(x) \mathcal A^t_m  ) \nonumber \\
  & = d_m(t) u_{\Lambda_0}^- \bigl( f(Y^{-1}) g(x) \mathcal A^t_m  \cdot E_{\delta_m}(x;q,t) \bigr)  \nonumber \\
  & = d_m(t) c_m(q,t) u_{\Lambda_0}^- \bigl( f(Y^{-1}) g(x) \cdot \Delta_m^t(x) \bigr) \nonumber \\
  &=  d_m(t) c_m(q,t) [f,g]_m \label{symmetry}
\end{align}  
where, in the fist equality, we used the fact that $\mathcal A^t_m $ commutes with $f(Y^{-1})$ and $g(x)$ because they are both symmetric in the first $m$ variables.  We now use  $\Theta_m =  \Theta_m \circ \varphi$ and  $\varphi(\mathcal A^t_m )=\mathcal A^t_m $ to interchange $f$ and $g$:
\begin{equation*}
 \Theta_m\bigl(\mathcal A^t_m  f(Y^{-1}) g(x) \mathcal A^t_m  \bigr) = \Theta_m \circ \varphi \bigl(\mathcal A^t_m  f(Y^{-1}) g(x) \mathcal A^t_m  \bigr)=  \Theta_m\bigl(\mathcal A^t_m  g(Y^{-1}) f(x) \mathcal A^t_m  \bigl) 
\end{equation*}  
The symmetry $[f,g]_m=[g,f]_m$ then immediately follows from \eqref{symmetry}.
\end{proof}

\subsection{Symmetry of the bisymmetric Macdonald polynomials}
We can now extend a well-known result on Macdonald polynomials to the bisymmetric case. But first, we need to give the explicit expressions for $u_{\Lambda_0}^-(\mathcal P_{\Lambda}(x,q,t))$ and
$u_{\Lambda_0}^+(\mathcal P_{\Lambda}(x,q,t))$ that were obtained in \cite{GL}.

For a box $s=(i,j)$ in a partition $\lambda$ (i.e., in row $i$ and column $j$), we introduce the usual arm-lengths and leg-lengths:
\begin{align} 
a_{\lambda}(s)= \lambda_i-j \quad {\rm and} \quad l_{\lambda}(s)= \lambda'_j-i
\end{align}
where we recall that $\lambda'$ stands for the
conjugate of the partition $\lambda$. Let  $\mathcal B(\Lambda)$ denote the set of boxes in the diagram of  $\Lambda$ that do not
appear at the same time in a row containing a circle {and} in a
column containing a circle.
{\begin{equation*}
\Lambda={\tiny{\tableau[scY]{&&&&&\bl\tcercle{}\\&&& &\\&&&\bl\tcercle{}\\ &&&\bl \\&\bl\tcercle{}\\ &\bl \\ & \bl \\ \bl\tcercle{}}}}\quad\Longrightarrow\quad
\mathcal{B}\Lambda={\tiny{\tableau[scY]{\bl&\bl&&\bl&\\&&&&\\ \bl&\bl& \\ & & \\ \bl\\ &\bl\\  &\bl}}}
\end{equation*}}
For $\Lambda$ a superpartition of fermionic degree $m$, let $\mathcal S \Lambda$
be the skew diagrams $\Lambda^{\circledast}/\delta_{m+1}$.
{\begin{equation*}
\Lambda={\tiny{\tableau[scY]{&&&&&\bl\tcercle{}\\&&&&\\&&&\bl\tcercle{}\\ &&\bl \\&\bl\tcercle{}\\  &\bl \\ &\bl  \\ \bl\tcercle{}}}}\quad\Longrightarrow\quad
\mathcal{S}\Lambda={\tiny{\tableau[scY]{\bl&\bl&\bl&\bl&&\\\bl&\bl&\bl&&\\ \bl&\bl&&\\\bl &&\bl \\&\\ &\bl \\ &\bl \\ & \bl }}}  
\end{equation*}}
Finally, for a partition $\lambda$, let $n(\lambda)=\sum_{i} (i-1)\lambda_i$.
In the case of a skew partition $\lambda/\mu$,  $n(\lambda/\mu)$ stands for $n(\lambda)-n(\mu)$.

The following theorem was proved in  \cite{GL} for Macdonald polynomials in superspace (in fact, only  \eqref{ev-mac}
was proved therein.  But
using \eqref{eqtneg}, one can immediately deduce
\eqref{ev+mac}). From Remark~\ref{remark1}, it also applies to bisymmetric Macdonald polynomials.

\begin{theorem} 
Let $\La$ be of fermionic degree $m$.
Then the evaluation formulas for the bisymmetric Macdonald polynomials  read
\begin{equation}
\label{ev-mac}
u_{\Lambda_0}^- \bigl(\mathcal P_\La\bigr)= \frac{t^{n(\mathcal S \Lambda)+n( (\Lambda')^a/\delta_m)}}
   {q^{(m-1) |\Lambda^a/\delta_m| - n(\Lambda^a/\delta_m)}}
   \frac{\prod_{(i,j)\in \mathcal{S}\Lambda}(1-q^{j-1}t^{N-(i-1)})}{\prod_{s\in \mathcal{B}\Lambda} (1-q^{a_{\Lambda^{\circledast}}(s)}t^{l_{\Lambda^*}(s)+1})} 
\end{equation}
and
\begin{equation} \label{ev+mac}
  u_{\Lambda_0}^+
  \bigl(\mathcal P_\La\bigr)= \frac{q^{m |\Lambda^a/\delta_m| - n(\Lambda^a/\delta_m)}}
   {t^{n(\mathcal S \Lambda)+n( (\Lambda')^a/\delta_m)}}
   \frac{\prod_{(i,j)\in \mathcal{S}\Lambda}(1-q^{1-j}t^{i-(N+1)})}{\prod_{s\in \mathcal{B}\Lambda} (1-q^{-a_{\Lambda^{\circledast}}(s)}t^{-l_{\Lambda^*}(s)-1})} 
\end{equation}
\end{theorem}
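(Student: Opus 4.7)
The plan is to reduce the theorem to the superspace setting, invoke the Ghioldi--Lapointe evaluation formula \cite{GL} for the negative evaluation, and then obtain \eqref{ev+mac} from \eqref{ev-mac} by the $(q,t) \mapsto (q^{-1},t^{-1})$ symmetry \eqref{eqtneg} already established in the paper.

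First I would make the connection to the superspace picture explicit. By construction, the bisymmetric Macdonald polynomial $\mathcal P_\Lambda(x;q,t)$ is the coefficient $f_{\{1,\dots,m\}}(x)$ of $\theta_1\cdots\theta_m \Delta_{[m]}(x)$ in the Macdonald polynomial in superspace $\mathcal P_\Lambda(x,\theta;q,t)$, and Remark~\ref{remark1} identifies $u_{\Lambda_0}^-(\mathcal P_\Lambda(x;q,t))$ with the superspace evaluation $\varepsilon^m_{t^{N-m},q,t}$ applied to that polynomial in superspace. Since \eqref{ev-mac} is exactly the formula for $\varepsilon^m_{t^{N-m},q,t}(\mathcal P_\Lambda)$ proved in \cite{GL}, this gives \eqref{ev-mac} directly without further calculation.

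For \eqref{ev+mac} I would invoke \eqref{eqtneg}, which relates $\mathcal P_\Lambda(x;q,t)$ to $\mathcal P_\Lambda(x;q^{-1},t^{-1})$ up to an explicit monomial in $q$ and $t$. Observe that the substitution $(q,t) \mapsto (q^{-1},t^{-1})$ applied to the point of $u_{\Lambda_0}^-$ gives the point of $u_{\Lambda_0}^+$, so one has an identity of the form
$$
u_{\Lambda_0}^+(\mathcal P_\Lambda(x;q,t)) \,=\, (\text{explicit }q,t\text{-factor}) \cdot u_{\Lambda_0}^-(\mathcal P_\Lambda(x;q,t))\big|_{q\to q^{-1}, t\to t^{-1}}.
$$
Applying this to \eqref{ev-mac} converts each factor $(1-q^{j-1}t^{N-(i-1)})$ into $(1-q^{1-j}t^{i-(N+1)})$ and each factor $(1-q^{a_{\Lambda^\circledast}(s)}t^{l_{\Lambda^*}(s)+1})$ into $(1-q^{-a_{\Lambda^\circledast}(s)}t^{-l_{\Lambda^*}(s)-1})$, giving precisely the rational part of \eqref{ev+mac}. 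The prefactor $t^{n(\mathcal S\Lambda)+n((\Lambda')^a/\delta_m)}/q^{(m-1)|\Lambda^a/\delta_m|-n(\Lambda^a/\delta_m)}$ flips to its reciprocal, and is then corrected by the explicit $q,t$-factor coming from \eqref{eqtneg}; combining the exponents produces $q^{m|\Lambda^a/\delta_m|-n(\Lambda^a/\delta_m)}/t^{n(\mathcal S\Lambda)+n((\Lambda')^a/\delta_m)}$, matching \eqref{ev+mac}.

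The main obstacle is purely bookkeeping: verifying that the monomial correction coming from the $(q,t) \mapsto (q^{-1},t^{-1})$ symmetry \eqref{eqtneg} exactly shifts $(m-1)|\Lambda^a/\delta_m|$ to $m|\Lambda^a/\delta_m|$ in the $q$-exponent, with no hidden shift in the $t$-exponent. Once this monomial factor is computed (which is direct from the normalization of $\mathcal P_\Lambda$ via $c_\Lambda(t)$ and the way $(q,t)$-inversion interacts with the $t$-Vandermonde $\Delta_m^t$), no further argument is needed, so this theorem is essentially a restatement of the superspace result combined with the duality \eqref{eqtneg}.
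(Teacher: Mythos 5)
Your proposal is correct and follows essentially the same route as the paper: the paper likewise states that \eqref{ev-mac} is the evaluation formula proved in \cite{GL} (transported to the bisymmetric setting via Remark~\ref{remark1}) and that \eqref{ev+mac} then follows by applying the $(q,t)\mapsto(q^{-1},t^{-1})$ relation \eqref{eqtneg} with $\Omega=\Lambda_0$, whose prefactor $q^{|\Lambda^a|-m(m-1)/2}=q^{|\Lambda^a/\delta_m|}$ accounts exactly for the shift from $(m-1)|\Lambda^a/\delta_m|$ to $m|\Lambda^a/\delta_m|$ in the $q$-exponent. (Minor point: the authors of \cite{GL} are Gonz\'alez and Lapointe, not ``Ghioldi--Lapointe''.)
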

We can now state the two symmetries satisfied by the bisymmetric Macdonald polynomials.
\begin{theorem}\label{SimMac} Let $\tilde{\mathcal P}_{\Lambda}^-(x,q,t)$ and  $\tilde{\mathcal P}_{\Lambda}^+(x,q,t)$ be the two normalizations of the 
bisymmetric Macdonald polynomials:
  $$
\tilde{\mathcal P}_{\Lambda}^-(x,q,t) = \dfrac{\mathcal P_{\Lambda}(x;q,t)}{u_{\Lambda_0}^-(\mathcal P_{\Lambda}(x,q,t))} \qquad {\rm and} \qquad
\tilde{\mathcal P}_{\Lambda}^+(x,q,t) = \dfrac{\mathcal P_{\Lambda}(x;q,t)}{u_{\Lambda_0}^+(\mathcal P_{\Lambda}(x,q,t))} 
$$
where we recall that $\Lambda_0$ was defined in Remark~\ref{remark1}.
Then, the following two symmetries hold:
$$
u_\Omega^-(\tilde{\mathcal P}_{\Lambda}^-)=u_\Lambda^-(\tilde{\mathcal P}_{\Omega}^-)
\qquad {\rm and} \qquad u_\Omega^+(\tilde{\mathcal P}_{\Lambda}^+)=u_\Lambda^+(\tilde{\mathcal P}_{\Omega}^+)
$$
\end{theorem}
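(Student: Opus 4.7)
For the $u^-$ symmetry, the plan is to apply the symmetric pairing of Lemma \ref{SimGen} to $f=\mathcal P_\Lambda$ and $g=\mathcal P_\Omega$ (both bisymmetric, so in particular symmetric in $x_1,\dots,x_m$) and to evaluate both sides of $[\mathcal P_\Lambda,\mathcal P_\Omega]_m=[\mathcal P_\Omega,\mathcal P_\Lambda]_m$ using Lemma \ref{AutVal}. By \eqref{aut1}, $\mathcal P_\Lambda(Y^{-1})$ acts on $\Delta_m^t(x)\mathcal P_\Omega(x;q,t)$ as multiplication by the scalar $u_\Omega^-(\mathcal P_\Lambda)$, so
\[
[\mathcal P_\Lambda,\mathcal P_\Omega]_m \;=\; u_\Omega^-(\mathcal P_\Lambda)\,u_{\Lambda_0}^-(\Delta_m^t)\,u_{\Lambda_0}^-(\mathcal P_\Omega).
\]
Computing $[\mathcal P_\Omega,\mathcal P_\Lambda]_m$ the same way, equating both expressions via Lemma \ref{SimGen}, cancelling $u_{\Lambda_0}^-(\Delta_m^t)$ (nonzero by direct inspection of the specialization), and dividing by $u_{\Lambda_0}^-(\mathcal P_\Lambda)u_{\Lambda_0}^-(\mathcal P_\Omega)$ (nonzero by \eqref{ev-mac}) then yields $u_\Omega^-(\tilde{\mathcal P}_\Lambda^-)=u_\Lambda^-(\tilde{\mathcal P}_\Omega^-)$.

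For the $u^+$ symmetry I would mirror this argument, but must first absorb the $q^{-d}$ factor that appears in \eqref{aut3}. The plan is to introduce twisted Cherednik operators $\widehat Y_i:=qY_i$ for $1\le i\le m$ and $\widehat Y_i:=Y_i$ for $m<i\le N$: decomposing a bisymmetric $f$ into bi-homogeneous products $f^{(1)}(x_1,\ldots,x_m)f^{(2)}(x_{m+1},\ldots,x_N)$ and combining \eqref{aut2} with \eqref{aut3}, the $q$-shift cancels the $q^{-d}$ coming from the first-$m$ variables and produces
\[
f(\widehat Y_1,\ldots,\widehat Y_N)\,\Delta_m^t(x)\mathcal P_\Lambda(x;q,t) \;=\; u_\Lambda^+(f)\,\Delta_m^t(x)\mathcal P_\Lambda(x;q,t)
\]
for every bisymmetric $f$. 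Setting $[f,g]_m^+:=u_{\Lambda_0}^+\bigl(f(\widehat Y)g(x)\Delta_m^t(x)\bigr)$, the same chain of equalities as in the $-$ case reduces the desired identity to the assertion $[f,g]_m^+=[g,f]_m^+$.

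The main obstacle is therefore establishing the symmetry of this $+$ pairing, which I would prove by mirroring the two preparatory lemmas leading to Lemma \ref{SimGen}: define $\Theta_m^+(a):=u_{\Lambda_0}^+\bigl(a\cdot E_{\delta_m}(x;q,t)\bigr)$, introduce an anti-automorphism $\varphi^+$ of the double affine Hecke algebra obtained from $\varphi$ by the scalar twist $Y_i\mapsto qY_i$ for $i\le m$, and verify $\Theta_m^+=\Theta_m^+\circ\varphi^+$ on the basis \eqref{basis} by the two-step argument used to prove $\Theta_m=\Theta_m\circ\varphi$ (first on each $T_w$ via an analog of \eqref{Thetam}, then on general $f(x)T_w g(Y^{-1})$). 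Finally one would transfer the resulting symmetry of $\Theta_m^+$ to $[\cdot,\cdot]_m^+$ by the $t$-antisymmetrizer trick, using $\varphi^+(\mathcal A_m^t)=\mathcal A_m^t$ in place of $\varphi(\mathcal A_m^t)=\mathcal A_m^t$. Checking that the scalar twist genuinely extends to an anti-automorphism of the DAHA and fits into the antisymmetrizer step is the technical heart of the argument; once that is done, the $+$ symmetry follows by the same algebraic manipulation as in the $-$ case.
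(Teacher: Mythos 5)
Your argument for the negative-evaluation symmetry is correct and is essentially the paper's own proof: you pair $\mathcal P_\Lambda$ with $\mathcal P_\Omega$ via $[\cdot,\cdot]_m$, use \eqref{aut1} to pull out the eigenvalue $u_\Omega^-(\mathcal P_\Lambda)$, and invoke Lemma~\ref{SimGen}; factoring $u_{\Lambda_0}^-(\Delta_m^t\mathcal P_\Omega)$ into $u_{\Lambda_0}^-(\Delta_m^t)\,u_{\Lambda_0}^-(\mathcal P_\Omega)$ is harmless since $u_{\Lambda_0}^-$ is a point evaluation, and the non-vanishing you need is guaranteed by \eqref{ev-mac}.

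The positive-evaluation half, however, has a genuine gap at exactly the point you flag as the ``technical heart.'' The scalar twist $Y_i\mapsto qY_i$ for $i\le m$, $Y_i\mapsto Y_i$ for $i>m$ (with $T_i$ and $x_i$ fixed) is \emph{not} an (anti-)automorphism of the double affine Hecke algebra: the cross relation at the boundary index $i=m$, e.g. $T_mY_mT_m=tY_{m+1}$ (equivalently the first relation in \eqref{tsym1}), is sent to $qT_mY_mT_m=tY_{m+1}$, which fails unless $q=1$. So $\varphi^+$ as you describe it does not exist, and the antisymmetrizer step $\Theta_m^+\circ\varphi^+(\mathcal A_m^t f(\widehat Y)g(x)\mathcal A_m^t)=\Theta_m^+(\mathcal A_m^t g(\widehat Y)f(x)\mathcal A_m^t)$ — which is what actually interchanges $f$ and $g$ — cannot be run. (Your preliminary observation that $f(\widehat Y)$ has eigenvalue $u_\Lambda^+(f)$ on $\Delta_m^t\mathcal P_\Lambda$ is fine; it is only the symmetry of the resulting pairing that is unproved, and that symmetry is equivalent to the theorem itself, so nothing has been gained.) The paper avoids building a second pairing altogether: it deduces the $+$ symmetry from the $-$ symmetry already established, using the identity $q^{m(m-1)/2-|\Lambda^a|}\mathcal P_\Lambda(qx_1,\dots,qx_m,x_{m+1},\dots,x_N;q,t)=\mathcal P_\Lambda(x;1/q,1/t)$ (Equation (4.6) of \cite{ABDLM}). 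This shows that applying $(q,t)\mapsto(1/q,1/t)$ to $u_\Omega^-(\mathcal P_\Lambda)$ produces $q^{m(m-1)/2-|\Lambda^a|}u_\Omega^+(\mathcal P_\Lambda)$ as in \eqref{eqtneg}; the prefactor depends only on $\Lambda$ and cancels in the ratio defining $\tilde{\mathcal P}_\Lambda^+$, so the inverted $-$ symmetry \emph{is} the $+$ symmetry. You should either adopt that route or find a genuinely different justification for $[f,g]_m^+=[g,f]_m^+$; the twisted-$\varphi$ construction as stated will not close.
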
 
\begin{proof} We first prove the symmetry involving the negative evaluation.
  From the definition of the pairing $[\cdot,\cdot]$ and from Lemma~\ref{AutVal}, we get that
\begin{equation}
    \begin{array}{ll}
    \left[{\mathcal P}_{\Lambda}(x,q,t), {\mathcal P}_{\Omega}(x,q,t) \right]_m &= u_{\Lambda_0}^-\bigl({\mathcal P}_{\Lambda}(Y^{-1}_i) \Delta_m^t (x) {\mathcal P}_{\Omega}(x,q,t)\bigr) \\
    &= u_\Omega^-({\mathcal P}_{\Lambda}(x,q,t)) u_{\Lambda_0}^-\bigl( \Delta_m^t(x) {\mathcal P}_{\Omega}(x,q,t)\bigr)
\end{array}
\end{equation}
Using Lemma~\ref{SimGen}, we then have
$$
u_\Omega^-({\mathcal P}_{\Lambda}(x,q,t)) u_{\Lambda_0}^-\bigl( \Delta_m^t(x) {\mathcal P}_{\Omega}(x,q,t)\bigr)=
 u_\Lambda^-({\mathcal P}_{\Omega}(x,q,t)) u_{\Lambda_0}^-\bigl( \Delta_m^t(x) {\mathcal P}_{\Lambda}(x,q,t)\bigr)
$$
and the first symmetry follows immediately.

We will now deduce the symmetry involving the positive evaluation from the negative one. As we will see, it essentially follows from Equation (4.6) 
in \cite{ABDLM} which, when rewritten in our language, says that
\begin{equation*}
\Delta_m(qx_1,\dots,q x_m) \mathcal P_\Lambda(qx_1,\dots,qx_m,x_{m+1},\dots,x_N;q,t) =q^{|\Lambda^a|}\Delta_m(x) \mathcal P_\Lambda(x;1/q,1/t)
\end{equation*}    
Simplifying the previous equation as
\begin{equation} 
q^{m(m-1)/2-|\Lambda^a|} \mathcal P_\Lambda(qx_1,\dots,qx_m,x_{m+1},\dots,x_N;q,t) =\mathcal P_\Lambda(x;1/q,1/t)
\end{equation} 
we deduce that
\begin{align}
\label{eqtneg}
  & \bigl[u_\Omega^- (\mathcal P_\Lambda(x;q,t))\bigr]_{\substack{ (q,t) \mapsto (1/q, 1/t) }} \nonumber \\
& \quad  =
  \mathcal P_\Lambda( q^{\Omega_1}t^{1-w(1)},\dots,q^{\Omega_m}t^{1-w(m)}, q^{\Omega_{m+1}}t^{1-w(m+1)},\dots,q^{\Omega_N}t^{1-w(N)};1/q,1/t) \nonumber\\
   & \quad  =
  q^{m(m-1)/2-|\Lambda^a|}  \mathcal P_\Lambda( q^{\Omega_1+1}t^{1-w(1)},\dots,q^{\Omega_m+1}t^{1-w(m)}, q^{\Omega_{m+1}}t^{1-w(m+1)},\dots,q^{\Omega_N}t^{1-w(N)};q,t) \nonumber\\
& \quad  = q^{m(m-1)/2-|\Lambda^a|}  u_{\Omega}^+(\mathcal P_{\Lambda})
\end{align}
It is then immediate that
\begin{equation*}
    \begin{array}{ll}
    \bigl[u_\Omega^- (\mathcal P_\Lambda(x;q,t))\bigr]_{\substack{ (q,t) \mapsto (1/q, 1/t) }}     
    &= \left[ \dfrac{ u_{\Omega}^- (\mathcal P_{\Lambda}(x;q,t))}{u_{\Lambda_0}^-(\mathcal P_{\Lambda}(x,q,t))} \right]_{\substack{ (q,t) \mapsto (1/q, 1/t) }}   \\
    &= \dfrac{ u_{\Omega}^+ (\mathcal P_{\Lambda}(x;q,t))}{u_{\Lambda_0}^+(\mathcal P_{\Lambda}(x,q,t))} \\
    &= u_\Omega^+(\tilde{\mathcal P}_{\Lambda}^+)
    \end{array}
\end{equation*}
from which we conclude that the second symmetry also holds.
\end{proof}

\section{Double affine Hecke algebra relations}
In this section, we establish a few results involving the Hecke algebra and the 
Double affine Hecke algebra. They will be needed in the next section.  We start with a generalization of a known result in symmetric function theory.
\begin{lemma}\label{RelU+} Let $J \subseteq [N]$ and $L = [N] \setminus J$.
We then have
$$\displaystyle \sum_{\substack{\sigma([N-r+1,N]) = J \\ \sigma \in  \mathfrak S_{N}}} K_\sigma
 \left(\prod_{1 \leq i<j \leq N} \dfrac{x_i-t x_j}{x_i-x_j} \right)
 = a_{r,N}(t) A_{J \times L}(x,x)$$
where $r=|J|$ and
$$
a_{r,N}(t)= [r]_t! [N-r]_t!
$$
\end{lemma}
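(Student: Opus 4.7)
The plan is to realize this identity as a parabolic refinement of the classical full symmetrization formula
$$\sum_{w \in \mathfrak S_n} K_w \prod_{1 \leq i<j \leq n} \frac{x_i - tx_j}{x_i - x_j} = [n]_t!,$$
which is a standard identity from the theory of Hecke algebras (the Poincar\'e polynomial of $\mathfrak S_n$).

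First, I would parameterize the indexing set. Writing $J = \{j_1 < \cdots < j_r\}$ and $L = \{l_1 < \cdots < l_{N-r}\}$, the permutations $\sigma \in \mathfrak S_N$ with $\sigma([N-r+1,N]) = J$ form a left coset of the parabolic subgroup $\mathfrak S_{N-r} \times \mathfrak S_r$ acting on the blocks $[1,N-r]$ and $[N-r+1,N]$. I would fix the minimal length coset representative $\sigma_J$ sending $i \mapsto l_i$ for $i \leq N-r$ and $N-r+k \mapsto j_k$ for $1 \leq k \leq r$. Since $K_{\sigma_J \pi} = K_{\sigma_J} K_\pi$, every relevant $\sigma$ factors uniquely as $\sigma_J \pi$ with $\pi$ in the parabolic subgroup, and the left-hand side becomes
$$K_{\sigma_J}\!\left(\sum_{\pi \in \mathfrak S_{N-r} \times \mathfrak S_r} K_\pi \prod_{1 \leq i<j \leq N} \frac{x_i - tx_j}{x_i - x_j}\right).$$

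Next, I would split the big product into three blocks: pairs with $i<j\leq N-r$, pairs with $N-r<i<j\leq N$, and the ``cross'' pairs with $i \leq N-r < j$. The cross factor $\prod_{i \leq N-r < j} \frac{x_i-tx_j}{x_i-x_j}$ is invariant under $\mathfrak S_{N-r} \times \mathfrak S_r$ (each element permutes the two blocks of indices within themselves, merely rearranging the cross factors), so it pulls out of the sum. The remaining two blocks involve disjoint sets of variables, so the symmetrization factorizes, and the classical identity quoted above (applied once to $\mathfrak S_{N-r}$ acting on $\{x_1,\dots,x_{N-r}\}$ and once to $\mathfrak S_r$ acting on $\{x_{N-r+1},\dots,x_N\}$) produces $[N-r]_t!\,[r]_t! = a_{r,N}(t)$.

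Finally, I would apply $K_{\sigma_J}$ to the leftover cross product, which performs the substitution $x_i \mapsto x_{l_i}$ for $i\leq N-r$ and $x_{N-r+k} \mapsto x_{j_k}$, turning $\prod_{i \leq N-r < j} \frac{x_i-tx_j}{x_i-x_j}$ into $\prod_{a \in L,\, b \in J} \frac{x_a - tx_b}{x_a - x_b}$, which is the expression $A_{J \times L}(x,x)$ that appears on the right-hand side. There is no genuine obstacle: the one point requiring care is verifying the $\mathfrak S_{N-r} \times \mathfrak S_r$-invariance of the cross factor (a routine re-indexing) and correctly matching the output of $K_{\sigma_J}$ with the notation $A_{J \times L}(x,x)$; beyond that the argument is just bookkeeping around the classical symmetrization formula.
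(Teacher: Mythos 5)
Your proof is correct and takes essentially the same route as the paper's: both decompose the summation set as a coset of the parabolic subgroup stabilizing the block $[N-r+1,N]$, pull out the parabolic-invariant cross factor, and apply the identity \eqref{eqHecke} to each of the two diagonal blocks to produce $[r]_t!\,[N-r]_t!$. The only (cosmetic) difference is that the paper first treats $J=[r]$ via the representative $\gamma$ and then transports the identity to general $J$ by applying $K_\delta$, whereas you work directly with the minimal coset representative $\sigma_J$.
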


\begin{proof}  For convenience, we will let
  $$
\bar A_I(x) =  \prod_{\substack{ i,j \in I \\ i < j}} \left( \frac{x_i-tx_j}{x_i-x_j} \right)
  $$
We first prove the special case when $J=[r]$ and $L=[r+1,N]$.
Let $\gamma$ be the permutation $[r+1,\dots,N,1,\dots,r]$ (in one-line notation). In this case, we have
\begin{align*}
&  \displaystyle \sum_{\substack{\sigma([N-r+1,N]) = [r] \\ \sigma \in  \mathfrak S_{N}}} K_\sigma
 \bar A_N(x) \\
 & \qquad \qquad = \sum_{w \in  \mathfrak S_{r}} \sum_{w' \in  \mathfrak S_{r+1,N} }K_w K_{w'} K_\gamma
\bar  A_{N-r}(x) \bar A_{[N-r+1,N]}(x) A_{[N-r+1,N] \times [N-r]}(x,x) \\
  & \qquad \qquad = A_{[r] \times [r+1,N]}(x,x) \left( \sum_{w \in  \mathfrak S_{r}} K_w \bar A_{r}(x)
\right) \left(\sum_{w' \in  \mathfrak S_{r+1,N} } K_{w'} \bar A_{[r+1,N]}(x) \right)
\end{align*}
since $w$ and $w'$ leave $A_{[r] \times [r+1,N]}$ invariant.  Using \cite{Mac}
\begin{equation} \label{eqHecke}
\mathcal S_N^t \cdot 1= \sum_{ \sigma \in  \mathfrak S_{N}} K_\sigma  \bar A_{N}(x) = [N]_t!
\end{equation}
the formula is seen to hold in that case.

As for the general case, let $\delta$ be any permutation such that
$\delta([r])=J$ (and thus also such that $\delta([r+1,\dots,N])=L$). Applied on both sides of the special case that we just showed, we get
$$
\displaystyle \sum_{\substack{\sigma([N-r+1,N]) = [r] \\ \sigma \in  \mathfrak S_{N}}} K_\delta K_\sigma \left(\prod_{1 \leq i<j \leq N} \dfrac{x_i-t x_j}{x_i-x_j} \right) =  a_{r,N}(t) K_\delta  A_{[r] \times [r+1,N]}(x,x)=  a_{r,N}(t)  A_{J \times L}(x,x)
$$
which amounts to
$$
\displaystyle \sum_{\substack{\delta \sigma([N-r+1,N]) = J \\ \delta \sigma \in  \mathfrak S_{N}}} K_{\delta \sigma}  \left(\prod_{1 \leq i<j \leq N} \dfrac{x_i-t x_j}{x_i-x_j} \right)
=  a_{r,N}(t)  A_{J \times L}(x,x)
$$
The lemma then follows immediately.
\end{proof}
We now show that the product $Y_{N-r+1} \cdots Y_N$ of Cherednik operators can be simplified quite significantly in certain cases.
\begin{lemma} \label{rel Y}  Let $r \leq N-m$.
For any bisymmetric function $f(x)$, we have that    
        \begin{equation*}
               Y_{N-r+1} \cdots Y_N  f(x) = t^{(2m+r+1-2N)r/2}  \omega^{r} (\bar{T}_r \cdots \bar{T}_{m+r-1}) \cdots (\bar{T}_1 \cdots \bar{T}_{m})  f(x)
          \end{equation*}
\end{lemma}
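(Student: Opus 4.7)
The plan is to prove the identity by induction on $r$.  Writing $B_k = \bar T_k \bar T_{k+1}\cdots \bar T_{m+k-1}$, the target formula reads $Y_{N-r+1}\cdots Y_N f = t^{c_r}\,\omega^r B_r B_{r-1}\cdots B_1 f$ with $c_r = (2m+r+1-2N)r/2$.  The base case $r=1$ is immediate: $Y_N = \omega\,\bar T_1\cdots \bar T_{N-1}$ together with the bisymmetry $\bar T_j f = t^{-1}f$ for $j=m+1,\ldots,N-1$ yields $Y_N f = t^{m+1-N}\omega B_1 f$, which matches since $c_1 = m+1-N$.

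For the inductive step, substitute $Y_{N-r+1} = t^{-r+1}\,T_{N-r+1}\cdots T_{N-1}\,\omega\,\bar T_1\cdots \bar T_{N-r}$ into the inductive hypothesis and bring every $\omega$ to the far left in two stages.  Using $\bar T_j\,\omega = \omega\,\bar T_{j+1}$ for $j\leq N-2$ one obtains $\bar T_1\cdots \bar T_{N-r}\,\omega^{r-1} = \omega^{r-1}\,\bar T_r\cdots \bar T_{N-1}$ (valid because $r\geq 2$ keeps the shifted indices in range).  Using the cyclic DAHA extension $T_j\,\omega = \omega\,T_{(j+1)\bmod N}$ (which combines the stated $\omega T_i = T_{i-1}\omega$ for $i=2,\ldots,N-1$ with the standard DAHA identities $T_{N-1}\omega = \omega T_0$ and $T_0\omega=\omega T_1$) one further shifts $\omega^r$ past the initial $T$-string, rotating its indices by $r$ modulo $N$ to give $T_{N-r+1}\cdots T_{N-1}\,\omega^r = \omega^r\,T_1 T_2\cdots T_{r-1}$.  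Collecting these substitutions produces
\[
Y_{N-r+1}\cdots Y_N f \;=\; t^{c_{r-1}-r+1}\,\omega^r\,T_1\cdots T_{r-1}\,\bar T_r\cdots \bar T_{N-1}\,B_{r-1}\cdots B_1 f.
\]

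The computation closes via two symmetry claims.  \emph{(a)} $B_{r-1}\cdots B_1 f$ is symmetric in $x_{m+r},\ldots,x_N$, so $\bar T_j$ acts as $t^{-1}$ for $j=m+r,\ldots,N-1$, yielding $\bar T_r\cdots \bar T_{N-1}(B_{r-1}\cdots B_1 f) = t^{-(N-m-r)}\,B_r B_{r-1}\cdots B_1 f$.  \emph{(b)} $B_r B_{r-1}\cdots B_1 f$ is symmetric in $x_1,\ldots,x_r$, so $T_i$ acts as $t$ for $i=1,\ldots,r-1$, yielding $T_1\cdots T_{r-1}(B_r\cdots B_1 f) = t^{r-1}\,B_r\cdots B_1 f$.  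Gathering exponents: $-r+1 + c_{r-1} + (r-1) - (N-m-r) = c_{r-1} + r + m - N$, which equals $c_r$ by the immediate recurrence $c_r - c_{r-1} = r+m-N$, closing the induction.

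The main obstacle is establishing \emph{(a)} and \emph{(b)}.  Both rest on the Hecke-algebra identification $B_r B_{r-1}\cdots B_1 = T_{\tau_r^{-1}}^{-1}$, where $\tau_r\in \mathfrak S_N$ is the permutation with $\tau_r(i)=i+r$ for $i\in[1,m]$, $\tau_r(m+j)=j$ for $j\in[1,r]$, and $\tau_r(i)=i$ for $i>m+r$; a direct inversion count gives $\ell(\tau_r)=rm$, matching the total number of $\bar T$-factors and confirming the product is a reduced expression.  For \emph{(b)}, the conjugation $\tau_r^{-1}s_i\tau_r = s_{m+i}$ for $i=1,\ldots,r-1$, combined with the positivity conditions $\tau_r^{-1}(i)<\tau_r^{-1}(i+1)$ and $\tau_r(m+i)<\tau_r(m+i+1)$, lifts to the Hecke relation $T_i\,T_{\tau_r^{-1}}^{-1} = T_{\tau_r^{-1}}^{-1}\,T_{m+i}$; applying this to $f$ and invoking the bisymmetry $T_{m+i}f = tf$ (since $m+i\in[m+1,N-1]$ for $r\leq N-m$) gives \emph{(b)}.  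For \emph{(a)}, the analogous argument with $\tau_{r-1}$ — which fixes every position $\geq m+r$ — shows that $T_j$ commutes with $B_{r-1}\cdots B_1$ for $j\geq m+r$, and then $T_j f = tf$ yields the claim.
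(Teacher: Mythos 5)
Your proof is correct, but its execution differs from the paper's in a way worth spelling out. The paper first establishes the \emph{pure operator} identity $Y_{N-r+1}\cdots Y_N = t^{-r(r-1)/2}(\omega \bar T_1\cdots\bar T_{N-r})^r$ by induction, closing the inductive step through the telescoping cancellation $\bar T_{N-1}\cdots\bar T_{N-r+1}\,T_{N-r+1}\cdots T_{N-1}=1$ (so the leading $T$-string coming from the definition of $Y_{N-r+1}$ is absorbed exactly, with no wrap-around); it then moves all $\omega$'s left using only $\bar T_{i-1}\omega=\omega\bar T_i$ and applies the bisymmetry of $f$ once, at the very end, to prune the trailing $N-r-m$ factors of each block. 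You instead carry $f$ through the induction and prune at every step, which forces you to push $T_{N-r+1}\cdots T_{N-1}$ past $\omega^r$ cyclically --- requiring the affine relations $T_{N-1}\omega=\omega T_0$ and $T_0\omega=\omega T_1$, which are standard but not stated in the paper --- and then to absorb the resulting $T_1\cdots T_{r-1}$ via your claim \emph{(b)}, that $B_r\cdots B_1 f$ is symmetric in $x_1,\dots,x_r$. That claim is genuinely new relative to the paper's argument, and your justification of it (identifying $B_r\cdots B_1$ with $T_{\tau_r^{-1}}^{-1}$ via a reduced word of length $rm$ and conjugating $s_i$ to $s_{m+i}$) is sound; I checked the permutation $\tau_r$, the inversion count, the length conditions needed for $T_iT_{\tau_r^{-1}}^{-1}=T_{\tau_r^{-1}}^{-1}T_{m+i}$, and the bookkeeping of $t$-exponents ($c_r-c_{r-1}=r+m-N$), and everything closes. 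The trade-off: the paper's route is shorter and self-contained within the relations it has stated, while yours isolates a reusable structural fact (the symmetry of $B_r\cdots B_1 f$ in the first $r$ variables) at the cost of importing the cyclic $T_0$-relations and a Coxeter-theoretic lemma.
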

\begin{proof}  We first show that
\begin{equation} \label{firsteq}
  Y_{N-r+1} \cdots Y_N =
  t^{-r(r-1)/2}(\omega \bar{T}_1 \cdots \bar{T}_{N-r})^r
\end{equation}
The result obviously holds by definition when $r=1$.  Assuming that it holds for $r-1$, we have that
\begin{align*}
  Y_{N-r+1} \cdots Y_N & = Y_N \cdots Y_{N-r+1}  \\
  &= 
  t^{-(r-1)(r-2)/2}(\omega \bar{T}_1 \cdots \bar{T}_{N-r+1})^{r-1} \bigl( t^{-r+1}T_{N-r+1} \cdots T_{N-1} \omega \bar T_{1} \cdots \bar T_{N-r} \bigr)
\end{align*}
Making use of the relation
$\bar{T}_{i-1} \omega = \omega \bar{T}_{i}$, we can move the term $\bar T_{N-r+1}$ of every product to the right to get
\begin{align*}
  Y_{N-r+1} \cdots Y_N & =  t^{-r(r-1)/2}(\omega \bar{T}_1 \cdots \bar{T}_{N-r})^{r-1} \bar T_{N-1} \cdots \bar T_{N-r+1}  T_{N-r+1}  \cdots T_{N-1} \omega \bar T_{1} \cdots \bar T_{N-r}  \\
& = t^{-r(r-1)/2}(\omega \bar{T}_1 \cdots \bar{T}_{N-r})^r  
\end{align*}
which proves \eqref{firsteq} by induction.

Using 
$\bar{T}_{i-1} \omega = \omega \bar{T}_{i}$ again and again, we then get from \eqref{firsteq} that
$$
 Y_{N-r+1} \cdots Y_N = t^{-r(r-1)/2} \omega^r ( \bar{T}_r \cdots \bar{T}_{N-1}) \cdots (\bar{T}_1 \cdots \bar{T}_{N-r}) 
$$
If $f(x)$ is a bisymmetric function, the rightmost $N-r-m$ terms in every product in the previous equation can be pushed to the right and made to act as $1/t$ on $f(x)$.  This yields,
\begin{equation*}
          Y_{N-r+1} \cdots Y_N  f(x) = t^{-r(N-r-m) - r(r-1)/2}   \, \omega^{r} (\bar{T}_r \cdots \bar{T}_{m+r-1}) \cdots (\bar{T}_1 \cdots \bar{T}_{m})  f(x)
\end{equation*}
which proves the lemma.
\end{proof}
The next result shows that $e_r(Y_{1}, \ldots, Y_N)$ can be recovered from 
$\mathcal S^t_N$ acting on $Y_{N-r+1} \cdots Y_N$. 
\begin{lemma}  \label{lemmaeY}
  For $r \leq N$, we have that if $f(x)$ is a symmetric function  then
\begin{equation*}
e_r(Y_{1}, \ldots, Y_N) f(x)= \dfrac{1}{[N-r]_t! [r]_t!} \mathcal S^t_N  Y_{N-r+1} \cdots Y_N   f(x)     
\end{equation*}
\end{lemma}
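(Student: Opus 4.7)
The plan is to decompose $\mathcal S^t_N$ along cosets of the parabolic subgroup $W_0 = \mathfrak S_{1,N-r} \times \mathfrak S_{N-r+1,N}$, exploiting the fact that the ``outside'' product $Y_{N-r+1}\cdots Y_N$ is symmetric enough to commute with the $T_i$ indexed by the simple generators of $W_0$. For each $r$-subset $J\subseteq[N]$, let $w_J$ denote the unique shuffle permutation sending $[N-r+1,N]$ increasingly onto $J$; this is the minimal length representative of a right coset of $W_0$ in $\mathfrak S_N$. Standard Coxeter theory then gives the reduced factorization
$$
  \mathcal S^t_N \;=\; \sum_{J\subseteq[N],\,|J|=r} T_{w_J}\,\mathcal S^t_{1,N-r}\,\mathcal S^t_{N-r+1,N}.
$$

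The relations \eqref{TYi}, together with $T_i Y_j=Y_j T_i$ for $j\neq i,i+1$, imply that $T_i$ commutes with $Y_{N-r+1}\cdots Y_N$ whenever $s_i$ is a simple generator of $W_0$: if $i,i+1\in[1,N-r]$ none of the factors are touched, and if $i,i+1\in[N-r+1,N]$ then $T_i$ commutes with $Y_iY_{i+1}$ and with every remaining factor. Hence $\mathcal S^t_{1,N-r}\,\mathcal S^t_{N-r+1,N}$ can be pushed past $Y_{N-r+1}\cdots Y_N$. Since $f$ is symmetric, $T_if=tf$ for every $i\in[1,N-1]$, so $\mathcal S^t_{1,N-r}\,\mathcal S^t_{N-r+1,N}\,f=[N-r]_t!\,[r]_t!\,f$, yielding
$$
  \mathcal S^t_N\,Y_{N-r+1}\cdots Y_N\,f \;=\; [N-r]_t!\,[r]_t!\,\sum_J T_{w_J}\,Y_{N-r+1}\cdots Y_N\,f.
$$

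The heart of the proof is then the term-by-term identity $T_{w_J}\,Y_{N-r+1}\cdots Y_N\,f = Y_J f$, with $Y_J:=\prod_{j\in J}Y_j$, since summing over $J$ immediately reproduces $e_r(Y_1,\dots,Y_N)f$ and closes the argument. I would establish it by induction on $\ell(w_J)$: the base case $w_J=e$ is trivial, and for the inductive step the shuffle characterization of $w_J$ forces any left descent $s_i$ of $w_J$ to have $i+1$ sitting in positions $[1,N-r]$ and $i$ in positions $[N-r+1,N]$. A direct verification then shows that $w_{J'}:=s_iw_J$ is again a shuffle, with $J'=(J\setminus\{i\})\cup\{i+1\}$ and $\ell(w_{J'})=\ell(w_J)-1$, so that $T_{w_J}=T_i T_{w_{J'}}$ is a reduced product. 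The inductive hypothesis gives $T_{w_{J'}}Y_{N-r+1}\cdots Y_N f = Y_{J'}f$, and since $i+1\in J'$ but $i\notin J'$ the relation $T_iY_{i+1}=Y_iT_i-(t-1)Y_i$ from \eqref{tsym1}, combined with $T_if=tf$, collapses $T_iY_{J'}f$ to $Y_Jf$.

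The main obstacle is the combinatorial step inside the induction: verifying that left-multiplying the shuffle $w_J$ by a suitably chosen simple reflection yields another shuffle of strictly smaller length, and tracking how $J$ changes. Once that is in hand, the remaining manipulations are direct applications of the Hecke relations \eqref{tsym1} and \eqref{TYi} already recorded in the preliminaries.
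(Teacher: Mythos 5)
Your proposal is correct and follows essentially the same route as the paper's proof: both decompose $\mathcal S^t_N$ over cosets of the parabolic subgroup $\mathfrak S_{1,N-r}\times\mathfrak S_{N-r+1,N}$, absorb the parabolic part into the scalar $[N-r]_t!\,[r]_t!$ using \eqref{TYi} together with $T_i f=tf$, and then show that each minimal (shuffle) coset representative converts $Y_{N-r+1}\cdots Y_N$ into $\prod_{j\in J}Y_j$. The only difference is organizational — the paper computes with an explicit reduced word for the shuffle while you induct on length by peeling off left descents — so the two arguments are the same calculation in different clothing.
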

\begin{proof}
  First, if $w \in  \mathfrak S_{r}$ and $\sigma \in  \mathfrak S_{r+1,N}$ then $(T_w T_\sigma) Y_{N-r+1} \cdots Y_N = Y_{N-r+1} \cdots Y_N (T_w T_{\sigma})$
 by \eqref{TYi}. This yields
    $$
T_w T_\sigma Y_{N-r+1} \cdots Y_N f(x)= t^{\ell(w)+\ell(\sigma)} Y_{N-r+1} \cdots Y_N f(x)
$$
given that $f(x)$ is symmetric.
Hence,
summing over all the elements of $ \mathfrak S_r \times  \mathfrak S_{r+1,N}$
in $\mathcal S^t_N=\sum_{\sigma \in \mathfrak S_N} T_\sigma$
gives a factor of
$[N-r]_t! [r]_t!$ from \eqref{eqHecke}.  We thus have left to prove that
$$
e_r(Y_{1}, \ldots, Y_N) f(x)= \sum_{[\sigma^*] \in  \mathfrak S_N/ ( \mathfrak S_r \times  \mathfrak S_{r+1,N})} T_{\sigma^*}  Y_{N-r+1} \cdots Y_N   f(x) 
$$
where the sum is over all left-coset representatives $\sigma^*$ of minimal length. Such minimal length representatives are of the form (in one-line notation) $\sigma^*=[i_1,\dots,i_{N-r},i_{N-r+1},\dots,i_{N}]$ with $i_1 < i_2 < \cdots < i_{N-r}$ and $i_{N-r+1} < i_{N-r+2} < \cdots < i_N$. A reduced decomposition of $\sigma^*$ is then given by \begin{equation} \label{reddecomp}
 (s_{i_N} \cdots s_{N-1}) \cdots (s_{i_{N-r+1}} s_{i_{N-r+1}+1}\dots s_{N-r})
\end{equation}
We will now see that the factor $T_{i_{N-r+1}} T_{i_{N-r+1}+1}\dots T_{N-r}$ of $T_{\sigma^*}$ changes $Y_{N-r+1}$ into $Y_{i_{N-r+1}}$ and leaves the rest of the terms invariant. First, we use the relation $ T_i Y_{i+1}=t Y_i \bar T_i$ to obtain
$$
 T_{N-r} Y_{N-r+1} Y_{N-r+2} \cdots Y_N f(x)= t Y_{N-r}  \bar T_{N-r} Y_{N-r+2} \cdots Y_N f(x)=  Y_{N-r}   Y_{N-r+2} \cdots Y_N f(x)
 $$
 Proceeding in this way again and again, we then get that
 $$
 T_{i_{N-r+1}} T_{i_{N-r+1}+1}\dots T_{N-r}  Y_{N-r+1} Y_{N-r+2} \cdots Y_N f(x)
 = Y_{i_{N-r+1}}  Y_{N-r+2} \cdots Y_N f(x)
 $$
 as wanted.  By assumption, all of the remaining indices of the $s_j$'s in \eqref{reddecomp} are larger than $i_{N-r+1}$.  Hence $Y_{i_{N-r+1}}$ will not be affected by the remaining terms in $T_{\sigma^*}$. Following as we just did, it is then immediate that
 $$
 T_{\sigma^*}  Y_{N-r+1} \cdots Y_N   f(x) = Y_{i_{N-r+1}} \cdots Y_{i_N}  f(x)
 $$
Finally, summing over all $\sigma^*$, the lemma is then seen to hold.
 \end{proof}

\section{The action of  $e_{r}(Y_{m+1}, \ldots ,Y_{N})$ on bisymmetric functions}
\label{sec4}
In this section, we will obtain the  explicit action of the operator  $e_{r}(Y_{m+1}, \ldots ,Y_{N})$  on a bisymmetric function.  This will then be used in the next section to deduce the Pieri rules $e_{r}(x_{m+1}, \ldots ,x_{N})$ for the bisymmetric Macdonald polynomials.  
We first need to set some notation.

Let $[N]=\{1,\dots ,N\}$. For $I$ a  subset of $[N]$, recall that
\begin{equation}
\Delta_I(x)= \prod_{\substack{ i,j \in I \\ i < j}} (x_i-x_j)\, , \qquad
\Delta_I^t(x)= \prod_{\substack{ i,j \in I \\ i < j}} (tx_i-x_j) \qquad {\rm and}
\qquad A_I(x)=  \prod_{\substack{ i,j \in I \\ i < j}} \left( \frac{tx_i-x_j}{x_i-x_j} \right)
\end{equation}
For simplicity, when $I=[m]= \{1,\dots,m \}$, we will use the notation
$\Delta_m(x)$, $\Delta_m^t(x)$ or $A_m(x)$  instead of $\Delta_{[m]}(x)$, $\Delta_{[m]}^t(x)$ or $A_{[m]}(x)$.

We will also need a similar notation for subsets of $[N] \times [N]$.
If $\Aset \subseteq [N] \times [N]$, we let 
$$R_{\Aset}(x,y) = \prod_{(i,j) \in \Aset } (1-x_i y_j)\, , \quad
\varDelta_{\Aset }(x,y) = \prod_{(i,j) \in \Aset } (x_i-y_j)\, , \quad {\rm and} \quad 
A_{\Aset}(x,y) = \prod_{(i,j) \in \Aset  } \left(\frac{tx_i - y_j}{x_i-y_j} \right)$$
With this notation in hand, we define
$$\mathcal{F}_{m}(x,y) = \dfrac{\Delta^t_{m}(x)}{R_{[m] \times [m]}(x,y)} $$
and
$$\mathcal{NF}_{m}(x,y) = \dfrac{R_{\Bset}(x,ty)}{R_{\Bset'}(x,y)} $$
where $\Bset$ is the set of integer points in the triangle with vertices $(1,1),(1,m-1)$ and $(m-1,1)$,
while $\Bset'$ is the set of integer points in the triangle with vertices
$(1,1),(1,m)$ and $(m,1)$.

\begin{example} The product $\mathcal{NF}_{3}(x,y)$ can be seen at the quotient of the factors stemming from the two following regions
\begin{center}
\includegraphics[scale=0.3]{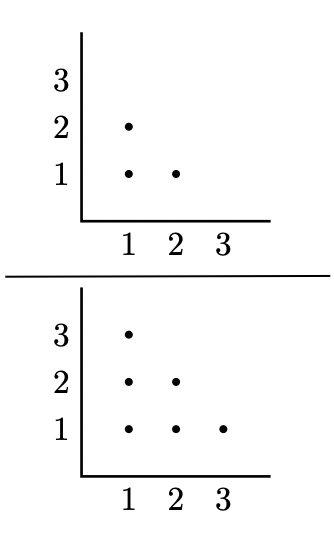}
\end{center}
Hence
$\mathcal{NF}_{3}(x,y)$ is equal to
$$\frac{(1-tx_1 y_1)(1-tx_1y_2)(1-tx_2y_1)}{(1-x_1 y_1)(1-x_1y_2)(1-x_1 y_3)(1-x_2 y_1)(1-x_2 y_2)(1-x_3 y_1)}
$$
\end{example}

Let $\mathcal A^{(y)}_m$ stand for the antisymmetrizer
$\mathcal A_m$ defined in  \eqref{antisym} but acting on the 
$y$ variables instead of the $x$ variables.
The next lemma was proven in \cite{BDLM2} in another form.
\begin{lemma}\label{RelR}  We have
$$\mathcal{A}^{(y)}_m R_{\Cset}(x,ty) R_{\Cset'}(x,y) = (-1)^{\binom{m}{2}} \Delta_m^t(x)\Delta_m(y)$$
where $\Cset$ is the set of integer points in the triangle with vertices $(1,1),(1,m-1)$ and $(m-1,1)$ while $\Cset'$ is the set of integer points in in the triangle with vertices $(2,m),(m,m)$ and $(m,2)$.
\end{lemma}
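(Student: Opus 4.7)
My plan is to reduce the identity to a determinantal one by separating the $y$-variables.  For each fixed $j\in\{1,\dots,m\}$, the row indices $i$ with $(i,j)\in\Cset$ are exactly $[1,m-j]$, while those with $(i,j)\in\Cset'$ are exactly $[m+2-j,m]$; these two ranges are disjoint, so the product factors as
\[
R_\Cset(x,ty)\,R_{\Cset'}(x,y) \;=\; \prod_{j=1}^m f_j(y_j), \qquad f_j(y) \;=\; \prod_{i=1}^{m-j}(1-tx_iy)\prod_{i=m+2-j}^m(1-x_iy).
\]
Because the $y$-variables now separate, antisymmetrization yields a determinant:
\[
\mathcal A^{(y)}_m \Bigl(\prod_{j=1}^m f_j(y_j)\Bigr) \;=\; \det\bigl(f_j(y_i)\bigr)_{1\le i,j\le m}.
\]

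Each $f_j$ has degree $m-1$ in $y$, so this determinant is antisymmetric in the $y_i$'s and of degree at most $m-1$ in each, hence equals $\Delta_m(y)\,Q(x,t)$ for some polynomial $Q$.  A brief inspection of the permutation expansion shows that $Q$ has total degree at most $\binom{m}{2}$ in $x$ and at most $\binom{m}{2}$ in $t$, so it is enough to prove $Q = (-1)^{\binom{m}{2}}\Delta_m^t(x)$.

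The main (nontrivial) step is to show that $Q$ vanishes whenever $x_j=tx_i$ for some pair $i<j$; combined with the degree bound, this will force $Q = c\,\Delta_m^t(x)$ for a constant $c$.  The key observation is that under the specialization $x_j=tx_i$, every column $f_\ell(y)$ acquires $(1-tx_iy)$ as a factor: for $\ell\le m-i$ it already appears in the first product, and for $\ell\ge m+2-j$ the second product contributes $(1-x_jy)=(1-tx_iy)$.  Since $i<j$, the ranges $[1,m-i]$ and $[m+2-j,m]$ together cover $[1,m]$.  Pulling $(1-tx_iy_k)$ out of row $k$ for each $k$ reduces $\det(f_\ell(y_k))$ to $\prod_k(1-tx_iy_k)$ times a residual determinant whose entries have degree at most $m-2$ in each $y_k$; being antisymmetric in the $y_k$'s, that residual would have to be divisible by $\Delta_m(y)$ (degree $m-1$ in every variable), so it must vanish.

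Finally, the constant $c$ is identified by a convenient specialization: at $t=0$ the coefficient matrix of $f_j(y)$ becomes upper triangular, yielding directly $Q(x,0)=\prod_j x_j^{j-1}$, and comparison with $\Delta_m^t(x)|_{t=0}=(-1)^{\binom{m}{2}}\prod_j x_j^{j-1}$ pins down $c=(-1)^{\binom{m}{2}}$.  The main obstacle is the vanishing step: for adjacent pairs $j=i+1$ one could just exhibit two equal columns, but for non-adjacent $j>i+1$ this fails, and it is the common-linear-factor observation combined with the drop in $y$-degree that closes the argument.
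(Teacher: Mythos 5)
Your proof is correct, and it is genuinely different from the paper's: the paper does not prove this identity at all but simply quotes Equation (129) of \cite{BDLM2} ``rewritten in our language (and with the $t$-power corrected).'' You instead give a self-contained argument. Your column-by-column reading of $\Cset$ and $\Cset'$ (rows $[1,m-j]$ and $[m+2-j,m]$ for column $j$, missing exactly the anti-diagonal entry $m+1-j$) is right, the separation into $\prod_j f_j(y_j)$ with $\deg_y f_j=m-1$ is right, and the antisymmetrization does produce $\det(f_j(y_i))=\Delta_m(y)\,Q(x,t)$. Your vanishing argument on $x_j=tx_i$ is the real content and it works: for $i<j$ the ranges $\ell\le m-i$ and $\ell\ge m+2-j$ cover all columns, so $(1-tx_iy_k)$ factors out of every entry of row $k$, and the residual antisymmetric determinant has $y_k$-degree $\le m-2$, hence vanishes; combined with the factor theorem in $x_j$ this gives $\Delta_m^t(x)\mid Q$, and the degree counts plus the $t=0$ specialization pin down the constant (I checked the signs: $Q(x,0)=\prod_j x_j^{j-1}$ and $\Delta_m^t(x)|_{t=0}=(-1)^{\binom{m}{2}}\prod_j x_j^{j-1}$, so $c=(-1)^{\binom{m}{2}}$, consistent with the $m=2$ case). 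One point worth making explicit: the bound $\deg_x Q\le\binom{m}{2}$ is not visible from the raw permutation expansion of $\det(f_j(y_i))$ (which only gives $m(m-1)$); it follows because $Q$ is the coefficient of $y_1^{m-1}y_2^{m-2}\cdots y_m^{0}$, or more cleanly from the factorization $\det(f_j(y_i))=\det(y_i^{k-1})\cdot\det(C)$ with $C$ the coefficient matrix whose $k$-th row has $x$-degree and $t$-degree $k-1$ — this factorization also makes the $t=0$ triangularity immediate. What your route buys is independence from the external reference (whose sign/$t$-power the authors themselves had to correct); what the paper's route buys is brevity.
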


\begin{proof} Equation (129) in \cite{BDLM2} rewritten in our language (and with the $t$-power corrected) says that
$$
\frac{1}{\Delta_m(y)}  \mathcal{A}^{(y)}_m \left( \prod_{i+j \leq m} (1-tx_i y_j) \prod_{\substack{i+j > m+1\\ i,j \leq m}}  (1-x_i y_j) \right) =  (-1)^{\binom{m}{2}} \Delta_m^t(x)
$$
The lemma then immediately follows.
\end{proof}

\begin{corollary} \label{RelB} The following identity holds:
 $$\mathcal{A}^{(y)}_m \mathcal{NF}_m(x,y) = (-1)^{\binom{m}{2}}\Delta_m(y)\mathcal{F}_m(x) $$
\end{corollary}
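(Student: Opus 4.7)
The plan is to reduce the corollary directly to Lemma \ref{RelR} via a factorization identity for the index sets. The key geometric observation is that the triangle $\Bset'$ (with vertices $(1,1),(1,m),(m,1)$) and the triangle $\Cset'$ (with vertices $(2,m),(m,m),(m,2)$) partition the square $[m]\times[m]$ into disjoint pieces. Indeed, $\Bset' = \{(i,j) : 1 \le i,j \le m,\; i+j \le m+1\}$, whose complement in $[m]\times[m]$ is $\{(i,j) : 1 \le i,j \le m,\; i+j \ge m+2\}$; since $i+j \ge m+2$ together with $i \le m$ forces $j \ge 2$ (and symmetrically $i \ge 2$), this complement coincides exactly with $\Cset'$. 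Consequently $R_{\Bset'}(x,y)\, R_{\Cset'}(x,y) = R_{[m]\times[m]}(x,y)$. Moreover, $\Bset$ and $\Cset$ are by definition the same set of lattice points in the triangle with vertices $(1,1),(1,m-1),(m-1,1)$, so $R_\Bset(x,ty)=R_\Cset(x,ty)$.

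First I will multiply numerator and denominator of $\mathcal{NF}_m(x,y)$ by $R_{\Cset'}(x,y)$ to rewrite
\begin{equation*}
\mathcal{NF}_m(x,y) \;=\; \frac{R_{\Cset}(x,ty)\, R_{\Cset'}(x,y)}{R_{[m]\times[m]}(x,y)}.
\end{equation*}
Next I will observe that the denominator $R_{[m]\times[m]}(x,y) = \prod_{i,j=1}^m (1-x_i y_j)$ is symmetric in the $y$ variables (swapping $y_j$ and $y_{j'}$ merely permutes the factors), and therefore commutes with the antisymmetrizer $\mathcal{A}_m^{(y)}$. Pulling it outside yields
\begin{equation*}
\mathcal{A}^{(y)}_m \mathcal{NF}_m(x,y) \;=\; \frac{1}{R_{[m]\times[m]}(x,y)}\, \mathcal{A}^{(y)}_m\!\bigl(R_{\Cset}(x,ty)\, R_{\Cset'}(x,y)\bigr).
\end{equation*}

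Finally I will apply Lemma \ref{RelR}, which evaluates the antisymmetrized numerator to $(-1)^{\binom{m}{2}} \Delta_m^t(x)\, \Delta_m(y)$. Substituting and recognizing the resulting ratio $\Delta_m^t(x)/R_{[m]\times[m]}(x,y)$ as $\mathcal{F}_m(x,y)$ then gives
\begin{equation*}
\mathcal{A}^{(y)}_m \mathcal{NF}_m(x,y) \;=\; (-1)^{\binom{m}{2}} \Delta_m(y)\, \mathcal{F}_m(x,y),
\end{equation*}
as desired. There is no real obstacle here beyond verifying the set-theoretic identity $\Bset' \sqcup \Cset' = [m]\times[m]$ and the trivial identification $\Bset=\Cset$; once those are in hand, the corollary is an immediate rewriting followed by one application of Lemma \ref{RelR}.
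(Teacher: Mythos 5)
Your proof is correct and is exactly the argument the paper intends: ``completing the square in the triangle $\Bset'$'' is precisely your multiplication of numerator and denominator by $R_{\Cset'}(x,y)$ so that the denominator becomes the $y$-symmetric factor $R_{[m]\times[m]}(x,y)$, after which Lemma~\ref{RelR} applies to the numerator $R_{\Cset}(x,ty)R_{\Cset'}(x,y)$. The set-theoretic verifications ($\Bset'\sqcup\Cset'=[m]\times[m]$ and $\Bset=\Cset$) are the only content, and you have them right.
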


\begin{proof} 
The identity can be deduced from Lemma~\ref{RelR} after completing the square in the triangle $\mathcal B'$ corresponding to the denominator of $\mathcal{NF}_{m}(x,y)$.\end{proof}

We now establish a few elementary relations that will be needed later on.
\begin{lemma} \label{RelTR}  For all $i \in \{ 1,\dots,N-1\}$ and all $j \in \{ 1,\dots,N\}$, 
we have
\begin{enumerate}
    \item $\bar T_{i} \dfrac{1}{(1-x_i y_j)} = \dfrac{(1-tx_i y_j)}{t(1-x_i y_j)(1-x_{i+1}y_j)}$
\item $T_{i} \dfrac{1}{(1-x_{i+1} y_j)} = \dfrac{t (1-t^{-1 }x_{i+1} y_j)}{(1-x_{i+1}y_j) (1-x_i y_j)}
    $ 
\end{enumerate}
\end{lemma}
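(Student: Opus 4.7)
The plan is straightforward: both identities are direct computations using the explicit formula \eqref{eqTi} for $T_i$, together with the formula $\bar T_i = T_i^{-1} = t^{-1} - 1 + t^{-1} T_i$ given earlier. Neither identity requires any deeper structure; they are purely algebraic manipulations on rational functions of $x_i, x_{i+1}, y_j$.

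First I would simplify the expression for $\bar T_i$ by substituting \eqref{eqTi} into $\bar T_i = t^{-1}-1+t^{-1}T_i$, which gives
\[
\bar T_i = t^{-1} + \frac{tx_i - x_{i+1}}{t(x_i - x_{i+1})}\bigl(K_{i,i+1}-1\bigr).
\]
For part (1), I would apply this to $1/(1-x_i y_j)$, noting that $K_{i,i+1}$ sends $1/(1-x_i y_j)$ to $1/(1-x_{i+1}y_j)$. After putting the resulting two terms over the common denominator $t(1-x_i y_j)(1-x_{i+1}y_j)$, the factor $(x_i - x_{i+1})$ in the denominator of the coefficient cancels with a matching factor arising from
\[
\frac{1}{1-x_{i+1}y_j}-\frac{1}{1-x_i y_j} = \frac{(x_{i+1}-x_i)y_j}{(1-x_iy_j)(1-x_{i+1}y_j)},
\]
and a short simplification yields the claimed expression $(1-tx_i y_j)/[t(1-x_iy_j)(1-x_{i+1}y_j)]$.

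For part (2), I would similarly apply the formula \eqref{eqTi} directly to $1/(1-x_{i+1}y_j)$. The computation is essentially identical: put the two resulting terms over the common denominator $(1-x_iy_j)(1-x_{i+1}y_j)$, cancel $(x_i-x_{i+1})$, and simplify. Both identities are verified by the same kind of one-line cancellation, so neither presents any real obstacle.

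There is no hard step here; the only place where one must be careful is keeping track of signs in the numerator difference $1/(1-x_iy_j)-1/(1-x_{i+1}y_j)$, and in the constant term arising from the $-1$ inside $(K_{i,i+1}-1)$ cancelling correctly against the $t^{-1}$ (resp.\ $t$) out front. Both identities can be stated and proved in a couple of lines each.
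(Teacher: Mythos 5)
Your proposal is correct: the direct computation goes through exactly as you describe, and I checked that both rational-function simplifications land on the stated right-hand sides. The paper takes a slightly slicker route for part (1): since $(1-x_iy_j)(1-x_{i+1}y_j)$ is symmetric in $x_i,x_{i+1}$ it commutes with $\bar T_i$, so the identity reduces to the polynomial statement $\bar T_i(1-x_{i+1}y_j)=t^{-1}(1-tx_iy_j)$, which is immediate; part (2) is handled the same way. The commutation trick buys you freedom from the common-denominator bookkeeping and the sign-tracking you flag as the only delicate point, but your fully explicit verification is equally valid and arguably more self-contained.
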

\begin{proof}
  We only prove the first relation  as the second one can be proven in the same fashion.  Since $(1-x_i y_j)(1-x_{i+1}y_j)$ is symmetric in $x_i$ and $x_{i+1}$, it commutes with $\bar T_i$. The first relation is thus equivalent to $\bar T_i(1-x_{i+1} y_j)= t^{-1}(1-tx_i y_j)$, which can easily be verified. 
\end{proof}

The following lemma concerns the function
\begin{equation}
  \mathcal{NF}_{m}^{k}(x,y) =
  \dfrac{R_{\Bset_k}(x,ty)}{R_{\Bset'_k}(x,y)} \end{equation}
  where $\Bset_k$ is the set of integer points in the trapezoid with vertices $(1,1),(1,m),(k,m)$ and $(m+k-1,1)$ while  $\Bset_k'$ is the set of integer points in the trapezoid with vertices $(1,1),(1,m),(k+1,m)$ and $(m+k,1)$. That is,  $\Bset_k'$ is the union of  $\Bset_k$ with the line segment from $(k+1,m)$ to $(m+k,1)$ of slope $-1$.  Note that  $\mathcal{NF}_m^0(x,y) = \mathcal{NF}_m(x,y)$.

\begin{lemma} \label{TnegNF} 
For $k \geq 1$, we have
$$\bar T_{k} \cdots \bar T_{k+m-1} \mathcal{NF}_{m}^{k-1}(x,y) = t^{-m} \mathcal{NF}_{m}^{k}(x,y)$$
Consequently, for $r\geq 1$, 
\begin{equation} 
(\bar T_{r} \cdots \bar T_{r+m-1}) \cdots (\bar T_{1} \cdots \bar T_{m}) \mathcal{NF}_{m}(x,y) =t^{-rm} \mathcal{NF}_{m}^{r}(x,y)     
\end{equation}
\end{lemma}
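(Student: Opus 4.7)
The plan is to prove the first identity by induction on the number of $\bar T$'s applied, and to obtain the second as a direct corollary by iteration.

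As a preliminary, I would rewrite $\mathcal{NF}_m^k$ as a product over $j$ of ``$y_j$-parts'',
$$\mathcal{NF}_m^k(x,y) \;=\; \prod_{j=1}^m \frac{\prod_{i=1}^{m+k-j}(1-tx_iy_j)}{\prod_{i=1}^{m+k+1-j}(1-x_iy_j)},$$
so that the target ratio takes the explicit form
$$\frac{\mathcal{NF}_m^k}{\mathcal{NF}_m^{k-1}} \;=\; \prod_{j=1}^{m} \frac{1-tx_{k+m-j}y_j}{1-x_{k+m+1-j}y_j}.$$
Thus the first identity is equivalent to the assertion that $\bar T_k\cdots \bar T_{k+m-1}$ multiplies $\mathcal{NF}_m^{k-1}$ by $t^{-m}$ times this explicit rational function.

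I would apply the operators one at a time from the right and prove, by induction on $s$ for $1 \leq s \leq m$, the partial identity
$$\bar T_{k+m-s}\cdots \bar T_{k+m-1}\,\mathcal{NF}_m^{k-1} \;=\; t^{-s}\left(\prod_{j=1}^{s} \frac{1-tx_{k+m-j}y_j}{1-x_{k+m+1-j}y_j}\right)\mathcal{NF}_m^{k-1}.$$
At each step, I would isolate in the intermediate expression the factors involving $x_{k+m-s-1}$ and $x_{k+m-s}$. A direct index comparison shows that for each $y_{j'}$-part with $j' \leq s-1$ the factors coming from $\mathcal{NF}_m^{k-1}$ are already symmetric under $x_{k+m-s-1} \leftrightarrow x_{k+m-s}$, while the $y_s$-part becomes symmetric once combined with the single term $(1-tx_{k+m-s}y_s)$ coming from the accumulated prefactor; for $j' = s+1$ only the one asymmetric factor $1/(1-x_{k+m-s-1}y_{s+1})$ remains, and for $j' \geq s+2$ neither variable appears. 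Since $\bar T_{k+m-s-1}(fg) = f\,\bar T_{k+m-s-1}(g)$ when $f$ is symmetric in $x_{k+m-s-1}, x_{k+m-s}$, the symmetric parts pull out, and Lemma~\ref{RelTR}(1) applied to the single asymmetric factor produces the new prefactor $\frac{1}{t}\cdot\frac{1-tx_{k+m-s-1}y_{s+1}}{1-x_{k+m-s}y_{s+1}}$, completing the induction. Setting $s=m$ recovers the first identity.

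The second identity follows immediately by iteration: starting from $\mathcal{NF}_m = \mathcal{NF}_m^0$ and applying the first identity successively for $k=1,2,\ldots,r$ accumulates an overall factor $t^{-rm}$ and transforms $\mathcal{NF}_m^0$ into $\mathcal{NF}_m^r$.

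The main obstacle is the bookkeeping in the inductive step: one must verify the symmetry under $x_{k+m-s-1} \leftrightarrow x_{k+m-s}$ for the combined $y_{j'}$-factors with $j' \leq s$ by carefully tracking which of the original conditions $i+j \leq m+k$ and $i+j \leq m+k-1$ defining the denominator and numerator of $\mathcal{NF}_m^{k-1}$ are saturated for each relevant $i$ and $j'$, and checking that the contribution of the single ``new'' prefactor term is precisely what is needed to restore symmetry in the boundary case $j'=s$.
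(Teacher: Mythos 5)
Your proof is correct and follows essentially the same route as the paper: the paper's argument is a diagrammatic sketch of exactly this peeling-off procedure, applying the $\bar T_i$ one at a time from the right, pulling the symmetric factors through, and using Lemma~\ref{RelTR}(1) on the single asymmetric denominator factor to "add a dot" to the numerator and denominator regions at each step. Your version just makes the index bookkeeping explicit, and the iteration for the second identity is the same in both.
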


\begin{proof}  
The lemma follows from
Lemma~\ref{RelTR}. Diagrammatically,  acting with $\bar T_{m}$ on $\mathcal{NF}_{m}$ amounts to adding a dot to the triangles associated to the numerator and the denominator in the diagram of $\mathcal{NF}_{m}$. For instance, if $m=3$ the diagram associated to $\bar T_{3} \mathcal{NF}_{3}$ is:
\begin{center}
\includegraphics[scale=0.3]{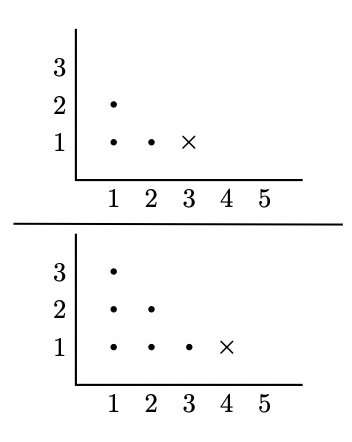}
\end{center}
The product $\bar T_{1} \cdots \bar T_{m}$ adds a diagonal above the triangles associated to $\mathcal{NF}_{m}$. For example,   $\bar T_{1} \bar T_{2} \bar T_{3} \mathcal{NF}_{3}$ is
\begin{center}
\includegraphics[scale=0.3]{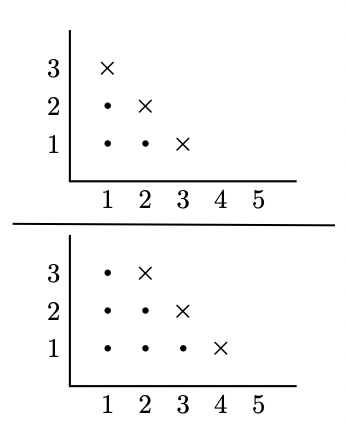}
\end{center}
Finally, as $r$ increases, extra diagonals are added. We get for instance, in the case $r=2$ and $m=3$, that acting with  $(\bar T_{2} \bar T_{3} \bar T_{4})( \bar T_{1} \bar T_{2} \bar T_{3})$ on  $\mathcal{NF}_{3}$ adds the following two diagonals:
\begin{center}
\includegraphics[scale=0.3]{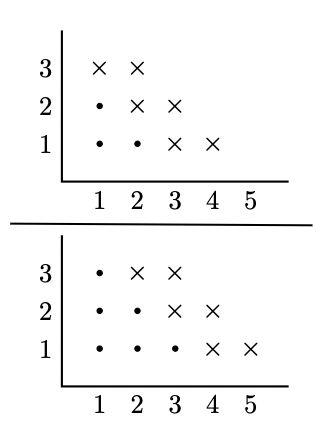}
\end{center}
\end{proof}
For the next lemma, we need to introduce some notation. Given a permutation $\sigma \in \mathfrak S_N$, we let
$$\AA_{\sigma} = \{i \in [1,m] \ |\ \sigma^{-1}(i) \in [1,m]\} \quad {\rm and }\quad
\BB_{\sigma} = \{i \in [m+1,N] \ |\ \sigma^{-1}(i) \in  [m+1,N]\}$$
Their respective complements are
$$\AA_{\sigma}^c = [1,m] \setminus \AA_{\sigma}, \quad {\rm and} \quad  \BB_{\sigma}^c = [m+1,N] \setminus \BB_{\sigma}$$
\begin{remark} \label{remarkinv}
  It is important to realize that   if  $w \in  \mathfrak S_m \times  \mathfrak S_{m+1,N}$ then $\AA_{\sigma}=\AA_{\sigma w}$ (and similarly for $\AA_{\sigma}^c$, $\BB_{\sigma}$ and $\BB_{\sigma}^c$).
\end{remark}
 For $A\times B \subseteq [N] \times [N]$, let  also
$(A \times B)_<= \{(i,j) \in A \times B \, |\,  i<j\}$. 
Finally, we define
$$
\widetilde \varDelta_{\Aset }(x,y) = \prod_{\substack{(i,j) \in \Aset \\ i \neq j } } (x_i-y_j) \quad {\rm and} \quad 
\widetilde A_{\Aset}(x,y) = \prod_{\substack{(i,j) \in \Aset \\ i \neq j } } \left(\frac{tx_i - y_j}{x_i-y_j} \right)$$
\begin{lemma}\label{RelDet} For  $\sigma \in \mathfrak S_{N}$, let
\begin{equation} \label{varphi}
    \varPhi(\sigma) = (-1)^{D_\sigma}\left( \prod_{j \in \BB_{\sigma}^c}(t-1)x_j \right)\dfrac{\Delta_{\AA_{\sigma}^c}(x)\Delta_{\BB_{\sigma}^c}(x)}{\Delta_{\AA_{\sigma}^c \times \BB_{\sigma}^c}(x,x)}
\end{equation}
where
$$D_{\sigma} = \dfrac{s(s-1)}{2}+\#(\AA_{\sigma}^c\times \AA_{\sigma})_<  +Z_{\sigma}$$
with $s= \# \AA_{\sigma^c}$ and $Z_\sigma=\#\{(i,j) \in [m] \times [m] \, |\, \sigma(i) > \sigma(j) \}$.
We then have the following equality:
\begin{equation} \label{eqPHI}
  \varPhi(\sigma) =\left(\prod_{j \in \BB_{\sigma}^c}(t-1)x_j \right) \cdot \dfrac{\widetilde{\varDelta}_{\BB_{\sigma}^c \times \sigma([m])}(x,x)}{\varDelta_{\AA_{\sigma}^c \times \sigma([m])}(x,x)}  \dfrac{\Delta_{m}(x)}{K_\sigma( \Delta_{m}(x))}
\end{equation}  
Moreover, if  $w \in  \mathfrak S_m \times  \mathfrak S_{m+1,N}$ then
\begin{equation} \label{eqinv}
\varPhi(\sigma w) = (-1)^{Z_{w}}\varPhi(\sigma)
\end{equation}
\end{lemma}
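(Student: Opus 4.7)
The plan is to prove the two identities \eqref{eqPHI} and \eqref{eqinv} separately. For \eqref{eqPHI} the idea is to rewrite the right-hand side by substituting $K_\sigma(\Delta_m(x)) = (-1)^{Z_\sigma}\Delta_{\sigma([m])}(x)$ and then systematically splitting every $\varDelta$-type factor over the disjoint decomposition $\sigma([m]) = \AA_\sigma \cup \BB_\sigma^c$, exploiting that every element of $\AA_\sigma \subseteq [m]$ is strictly smaller than every element of $\BB_\sigma^c \subseteq [m+1,N]$.

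After that substitution, $\Delta_{\sigma([m])}(x)$ factors as $\Delta_{\AA_\sigma}(x)\Delta_{\BB_\sigma^c}(x)\varDelta_{\AA_\sigma \times \BB_\sigma^c}(x,x)$, and I would similarly decompose $\widetilde{\varDelta}_{\BB_\sigma^c \times \sigma([m])}(x,x) = \varDelta_{\BB_\sigma^c \times \AA_\sigma}(x,x)\,\widetilde{\varDelta}_{\BB_\sigma^c \times \BB_\sigma^c}(x,x)$ and $\varDelta_{\AA_\sigma^c \times \sigma([m])}(x,x) = \varDelta_{\AA_\sigma^c \times \AA_\sigma}(x,x)\varDelta_{\AA_\sigma^c \times \BB_\sigma^c}(x,x)$. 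The key sign inputs are $\widetilde{\varDelta}_{\BB_\sigma^c \times \BB_\sigma^c}(x,x) = (-1)^{\binom{s}{2}}\Delta_{\BB_\sigma^c}(x)^2$ (pair $(i,j)$ with $(j,i)$) and $\varDelta_{\BB_\sigma^c \times \AA_\sigma}(x,x) = (-1)^{(m-s)s}\varDelta_{\AA_\sigma \times \BB_\sigma^c}(x,x)$. On the $\Delta_m$ side, I would use the decomposition $\Delta_m(x) = \Delta_{\AA_\sigma}(x)\Delta_{\AA_\sigma^c}(x)\varDelta_{(\AA_\sigma \times \AA_\sigma^c)_<}(x,x)\varDelta_{(\AA_\sigma^c \times \AA_\sigma)_<}(x,x)$ and split $\varDelta_{\AA_\sigma^c \times \AA_\sigma}(x,x)$ into its ``$_<$'' and ``$_>$'' pieces, the latter equalling $(-1)^{\#(\AA_\sigma \times \AA_\sigma^c)_<}\varDelta_{(\AA_\sigma \times \AA_\sigma^c)_<}(x,x)$ via the bijection $(i,j) \leftrightarrow (j,i)$ from $(\AA_\sigma^c \times \AA_\sigma)_>$ to $(\AA_\sigma \times \AA_\sigma^c)_<$. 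After the resulting cancellation, the RHS collapses to $(-1)^E \, \Delta_{\AA_\sigma^c}(x)\Delta_{\BB_\sigma^c}(x)/\varDelta_{\AA_\sigma^c \times \BB_\sigma^c}(x,x)$ times the common $(t-1)x_j$ prefactor, with $E = Z_\sigma + \binom{s}{2} + (m-s)s + \#(\AA_\sigma \times \AA_\sigma^c)_<$; the congruence $E \equiv D_\sigma \pmod{2}$ then follows from the identity $\#(\AA_\sigma \times \AA_\sigma^c)_< + \#(\AA_\sigma^c \times \AA_\sigma)_< = (m-s)s$.

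For \eqref{eqinv}, Remark~\ref{remarkinv} immediately gives that $\AA_\sigma$, $\AA_\sigma^c$, $\BB_\sigma$, $\BB_\sigma^c$ are all invariant under right-multiplication by $w \in \mathfrak{S}_m \times \mathfrak{S}_{m+1,N}$, so in the defining expression of $\varPhi(\sigma)$ only the factor $(-1)^{Z_\sigma}$ can possibly change. It remains to prove $Z_{\sigma w} \equiv Z_\sigma + Z_w \pmod{2}$. Since $w|_{[m]}$ is a permutation of $[m]$, the word $((\sigma w)(1),\ldots,(\sigma w)(m))$ is obtained from $(\sigma(1),\ldots,\sigma(m))$ by permuting the entries by $w|_{[m]}$; standardizing (which preserves inversion count) converts $\sigma|_{[m]}$ into an element of $\mathfrak{S}_m$, and sign multiplicativity in $\mathfrak{S}_m$ gives the desired congruence.

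The main obstacle is the delicate sign bookkeeping in the proof of \eqref{eqPHI}: four independent reorderings each contribute a $\pm 1$ (the reduction of $K_\sigma \Delta_m(x)$ to canonical form, the diagonal $\widetilde{\varDelta}_{\BB_\sigma^c \times \BB_\sigma^c}$ collapse, the swap between $\varDelta_{\BB_\sigma^c \times \AA_\sigma}$ and $\varDelta_{\AA_\sigma \times \BB_\sigma^c}$, and the bijection between $(\AA_\sigma^c \times \AA_\sigma)_>$ and $(\AA_\sigma \times \AA_\sigma^c)_<$), and they must combine to exactly $(-1)^{D_\sigma}$. No new technology beyond elementary Vandermonde identities is required.
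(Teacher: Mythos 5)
Your proof is correct and follows essentially the same route as the paper: both arguments substitute $K_\sigma(\Delta_m(x))=(-1)^{Z_\sigma}\Delta_{\sigma([m])}(x)$, split every factor over the disjoint unions $\sigma([m])=\AA_\sigma\cup\BB_\sigma^c$ and $[m]=\AA_\sigma\cup\AA_\sigma^c$, cancel, and collect the reordering signs into $D_\sigma$ via $\#(\AA_\sigma\times\AA_\sigma^c)_<+\#(\AA_\sigma^c\times\AA_\sigma)_<=s(m-s)$ (your sign total $E$ does reduce to $D_\sigma$ mod $2$). The only cosmetic difference is in \eqref{eqinv}, where you prove $Z_{\sigma w}\equiv Z_\sigma+Z_w$ by standardization while the paper invokes $K_w(\Delta_m(x))=(-1)^{Z_w}\Delta_{\{w(1),\dots,w(m)\}}(x)$; both are immediate.
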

\begin{proof} From Remark~\ref{remarkinv}, \eqref{eqinv}  is easily seen to hold given that
 $K_w(\Delta_m(x)) = (-1)^{Z_w} \Delta_{\{w(1),\ldots,w(m)\}}(x)$.
 We now prove \eqref{eqPHI}.
After simplifying the terms $\prod_{j \in \BB_{\sigma}^c}(t-1)x_j$
in $\varPhi(\sigma)$ and in the r.h.s. of \eqref{eqPHI}, we have left to prove that
\begin{equation} \label{topro}
  (-1)^{D_\sigma}\dfrac{\Delta_{\AA_{\sigma}^c}(x)\Delta_{\BB_{\sigma}^c}(x)}{\Delta_{\AA_{\sigma}^c \times \BB_{\sigma}^c}(x,x)} =
 \dfrac{\widetilde{\varDelta}_{\BB_{\sigma}^c \times \sigma([m])}(x,x)}{\varDelta_{\AA_{\sigma}^c \times \sigma([m])}(x,x)}  \dfrac{\Delta_{m}(x)}{K_\sigma( \Delta_{m}(x))}
\end{equation}
As all the remaining products are of the form
  $(x_i-x_j)$, it will prove convenient to simply work with sets, taking special care of the signs that may appear. On the r.h.s. of \eqref{topro}, we have in the numerator
  $$\bigl( \BB_{\sigma}^c \times \sigma([m]) \bigr) \cup ([m] \times [m])_<$$
  We  observe that $\sigma([m])=\AA_\sigma \cup \BB_\sigma^c$ since $\sigma^{-1}(i) \in [m] \iff i \in \sigma([m])$.  Hence, the numerator on the r.h.s. of \eqref{topro} is equal to
\begin{equation}  \label{eqnum}
 (\BB_{\sigma}^c \times \AA_{\sigma}) \cup (\BB_{\sigma}^c \times \BB_{\sigma}^c)_< \cup (\BB_{\sigma}^c \times \BB_{\sigma}^c)_>   \cup  (\AA_{\sigma} \times \AA_{\sigma})_<  \cup  (\AA_{\sigma} \times \AA_{\sigma}^c)_<  \cup ( \AA_{\sigma}^c \times \AA_{\sigma})_<  \cup  (\AA_{\sigma}^c \times \AA_{\sigma}^c)_<
\end{equation}
Now, the denominator on the r.h.s. is equal, up to a sign $(-1)^{Z_\sigma}$, to
$$ \bigl(\AA_{\sigma}^c \times \sigma([m]) \bigr) \cup  \bigl(\sigma([m]) \times \sigma([m]) \bigr)_< $$
which is in turn equivalent to
\begin{equation} \label{eqdenom}
  (\AA_{\sigma}^c \times \AA_{\sigma}) \cup (\AA_{\sigma}^c \times \BB_{\sigma}^c) \cup (\AA_{\sigma} \times \AA_{\sigma})_<  \cup (\AA_{\sigma} \times \BB_{\sigma}^c)_<  \cup (\BB_{\sigma}^c \times \AA_{\sigma})_<  \cup (\BB_{\sigma}^c \times \BB_{\sigma}^c)_< \end{equation}
It is immediate that $A \times B= (A\times B)_> \cup (A \times B)_<$ if $A$ and $B$ are disjoint. Moreover,  $(A\times B)_>=(B \times A)_<$ (which accounts for an extra sign $(-1)^{\#{(B \times A)_<}}$). Hence, 
comparing \eqref{eqnum} and \eqref{eqdenom}, we have that 
$(\AA_{\sigma}^c \times \AA_{\sigma}^c)_< \cup  (\BB_{\sigma}^c \times \BB_{\sigma}^c)_<$ is left on the numerator while $(\AA_{\sigma}^c \times \BB_{\sigma}^c)_<=\AA_{\sigma}^c \times \BB_{\sigma}^c$ is left on the denominator, with the extra sign being
$$(-1)^{ \# (\BB_{\sigma}^c\times \BB_{\sigma}^c)_<+\#(\AA_{\sigma}\times \BB_{\sigma}^c)_< +\#(\AA_{\sigma}\times \AA_{\sigma}^c)_<}$$
Taking into account the sign  $(-1)^{Z_\sigma}$ obtained earlier, we obtain
$$
D_\sigma=  \# (\BB_{\sigma}^c\times \BB_{\sigma}^c)_<+\#(\AA_{\sigma}\times \BB_{\sigma}^c)_< +\#(\AA_{\sigma}\times \AA_{\sigma}^c)_< + Z_\sigma
$$
Observer that $\#(\BB_{\sigma}^c\times \BB_{\sigma}^c)_< = \dfrac{s(s-1)}{2}$ since
$s=\# \AA_{\sigma}^c=\# \BB_{\sigma}^c$.  The elements of 
 $\AA_{\sigma}$ being all smaller than those of $\BB_{\sigma}^c$, we get
$$\#(\AA_{\sigma} \times \BB_{\sigma}^c)_< = \#\AA_{\sigma} \cdot \# \BB_{\sigma}^c = (m-s)s$$
Finally, given that the sets $\AA_{\sigma}$ are disjoint $ \AA_{\sigma}^c$, we have
$$
\#(\AA_{\sigma} \times \AA_{\sigma}^c)_< + \#(\AA_{\sigma}^c \times \AA_{\sigma})_< = 
 \#\AA_{\sigma}^c \cdot \# \AA_{\sigma} = s(m-s)$$
 We thus have as wanted that
$$(-1)^{D_{\sigma}} = (-1)^{s(s-1)/2+ \#(\AA_{\sigma}^c \times \AA_{\sigma})_<+Z_{\sigma}}$$
\end{proof}
Before proving the main result of this section, we obtain a criteria to show the equivalence of two operators.
\begin{lemma} \label{lemmasame}
  Let $\mathcal O$ and $\mathcal O'$ be any operators acting on bisymmetric functions. If for all symmetric functions $g(x)$ we have
  $$
  \mathcal O \left( \frac{g(x)}{R_{[m] \times [m]}(x,y)}\right) =  \mathcal O' \left( \frac{g(x)}{R_{[m] \times [m]}(x,y)}  \right) 
   $$
    then
  $$
\mathcal O f(x) = \mathcal O' f(x) 
  $$
for all bisymmetric functions $f(x)$.
\end{lemma}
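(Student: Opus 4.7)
The plan is to leverage the Cauchy identity together with the observation that the ring $\mathscr R_{m,N}$ of bisymmetric functions is generated as a ring by its two subrings of fully symmetric functions and of functions symmetric only in $x_1,\ldots,x_m$. First I would expand
$$\frac{1}{R_{[m]\times[m]}(x,y)} \;=\; \prod_{i,j=1}^m \frac{1}{1-x_iy_j} \;=\; \sum_{\lambda:\,\ell(\lambda)\leq m} s_\lambda(x_1,\ldots,x_m)\,s_\lambda(y_1,\ldots,y_m),$$
so that the hypothesized identity becomes an equality of formal power series in $y_1,\ldots,y_m$ with coefficients in $\mathscr R_{m,N}$. Since $\mathcal O$ and $\mathcal O'$ act on the $x$-variables only, they commute with multiplication by the factors $s_\lambda(y)$; using the linear independence of the Schur polynomials in $y_1,\ldots,y_m$, I would then extract coefficients to deduce
$$\mathcal O\bigl(g(x)\,s_\lambda(x_1,\ldots,x_m)\bigr) \;=\; \mathcal O'\bigl(g(x)\,s_\lambda(x_1,\ldots,x_m)\bigr)$$
for every fully symmetric $g(x)$ and every partition $\lambda$ with $\ell(\lambda)\leq m$.

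Next, since $\{s_\lambda(x_1,\ldots,x_m)\}_{\ell(\lambda)\leq m}$ is a $\mathbb Q$-basis of $\mathbb Q[x_1,\ldots,x_m]^{\mathfrak S_m}$, linearity of $\mathcal O$ and $\mathcal O'$ upgrades this immediately to the equality
$$\mathcal O\bigl(g(x)\,h(x_1,\ldots,x_m)\bigr) \;=\; \mathcal O'\bigl(g(x)\,h(x_1,\ldots,x_m)\bigr)$$
for every $g \in \mathbb Q[x_1,\ldots,x_N]^{\mathfrak S_N}$ and every $h \in \mathbb Q[x_1,\ldots,x_m]^{\mathfrak S_m}$. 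To finish, I would argue that every bisymmetric $f \in \mathscr R_{m,N}$ decomposes as a finite $\mathbb Q$-linear combination of such products: indeed, $\mathscr R_{m,N}$ is generated as a $\mathbb Q$-algebra by the power sums $p_r(x_1,\ldots,x_m)$ and $p_r(x_{m+1},\ldots,x_N)$, and the identity $p_r(x_{m+1},\ldots,x_N) = p_r(x_1,\ldots,x_N) - p_r(x_1,\ldots,x_m)$ shows that it is in fact generated by its two subrings $\mathbb Q[x_1,\ldots,x_N]^{\mathfrak S_N}$ and $\mathbb Q[x_1,\ldots,x_m]^{\mathfrak S_m}$. Linearity of $\mathcal O$ and $\mathcal O'$ then yields $\mathcal O f = \mathcal O' f$ on all of $\mathscr R_{m,N}$.

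The only slightly delicate point, which I would address carefully at the outset, is the formal-series bookkeeping in the first step: one must view both sides of the hypothesis as formal power series in $y_1,\ldots,y_m$ whose coefficients are bisymmetric polynomials in $x$, and confirm that $\mathcal O$ and $\mathcal O'$ act coefficient-wise on such series. This is routine given that both operators are $\mathbb Q(q,t)$-linear on $\mathscr R_{m,N}$ and the $y_j$'s appear only as parameters.
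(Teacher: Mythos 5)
Your proposal is correct and follows essentially the same route as the paper: expand the Cauchy kernel $1/R_{[m]\times[m]}(x,y)$ in Schur functions, extract coefficients of $s_\lambda(y_1,\dots,y_m)$ to get agreement of $\mathcal O$ and $\mathcal O'$ on products $h(x_1,\dots,x_m)g(x)$, and conclude because such products span $\mathscr R_{m,N}$. The only cosmetic difference is that you justify the spanning step via ring generation by power sums, whereas the paper directly cites the products $s_\lambda(x_1,\dots,x_m)s_\mu(x_1,\dots,x_N)$ as a basis of the bisymmetric ring.
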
  
\begin{proof} A basis of the space of bisymmetric functions is provided by products of Schur functions $\{ s_\lambda (x_1,\dots,x_m) s_\mu (x_1,\dots,x_N) \, | \, \}_{\lambda,\mu}$ where
  $\lambda$ and $\mu$ are partitions of length not larger than $m$ and $N$ respectively. It is well-known that \cite{Mac}
  $$
\frac{1}{R_{[m] \times [m]}(x,y)} = \sum_{\lambda \, ; \, \ell(\lambda) \leq m} s_\lambda(x_1,\dots,x_m)  s_\lambda(y_1,\dots,y_m) 
$$
Hence, by hypothesis,
$$
0= (\mathcal O -\mathcal O') \left( \frac{s_\mu(x)}{R_{[m] \times [m]}(x,y)}\right)=  \sum_{\lambda \, ; \, \ell(\lambda) \leq m} (\mathcal O -\mathcal O')\bigl(s_\lambda(x_1,\dots,x_m) s_{\mu}(x) \bigr) s_\lambda(y_1,\dots,y_m) 
$$
Taking the coefficient of $s_\lambda(y_1,\dots,y_m) $ in the expansion tells us that the action of $\mathcal O-\mathcal O'$ on the basis element $s_\lambda(x_1,\dots,x_m) s_{\mu}(x)$ is null. We thus conclude that
 $\mathcal O$ and $\mathcal O'$ have the same action on the basis element $s_\lambda(x_1,\dots,x_m) s_{\mu}(x)$, and thus on any bisymmetric function. 
\end{proof}  
\begin{proposition} \label{Op+}
Let $f(x)$ be any bisymmetric function.  Then
\begin{equation*}
  e_{r}(Y_{m+1},\ldots,Y_{N}) \Delta_m^t(x) f(x) =  \sum_{\substack{ J \subset [m+1,N] \\ |J| = r }} \, \, 
  \sum_{\substack{ [\sigma] \in  \mathfrak S_N/ ( \mathfrak S_m \times  \mathfrak S_{m+1,N}) \\ \sigma([m]) \cap L= \emptyset }} C_{J,\sigma}(x) \tau_{J} K_\sigma f (x)
\end{equation*}
where the coefficient $C_{J,\sigma}(x)$ is given by
\begin{equation*}
C_{J,\sigma}(x) = t^{r(r+1-2N)/2}  A_m(x) A_{J \times L}(x,x) \tau_{J} \bigl( \widetilde{A}_{J \times \sigma([m])}(x,x) \varPhi(\sigma)  K_\sigma( \Delta_{m}(x) ) \bigr).
\end{equation*}
with $L = [m+1,N] \setminus J$. We stress that $C_{J,\sigma}(x)=C_{J,\sigma w}(x)$
if $w \in  \mathfrak S_m \times  \mathfrak S_{m+1,N}$. As such, it makes sense to consider $[\sigma] \in  \mathfrak S_N/ ( \mathfrak S_m \times  \mathfrak S_{m+1,N})$.
\end{proposition}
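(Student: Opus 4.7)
The plan is to start by extending Lemma~\ref{lemmaeY} to the operator $e_r(Y_{m+1},\ldots,Y_N)$. Since the commutation relations \eqref{TYi} hold for any adjacent pair and the coset argument in the proof only needs symmetry in the indices $[m+1,N]$, the same proof yields
$$e_r(Y_{m+1},\ldots,Y_N)\,h(x)=\frac{1}{[N-m-r]_t!\,[r]_t!}\,\mathcal{S}^t_{m+1,N}\,Y_{N-r+1}\cdots Y_N\,h(x)$$
for any $h(x)$ symmetric in $x_{m+1},\ldots,x_N$. Taking $h=\Delta_m^t(x)f(x)$ (which has that symmetry since $f$ is bisymmetric) and invoking Lemma~\ref{rel Y} -- whose proof only uses symmetry in the last $N-m$ variables -- replaces $Y_{N-r+1}\cdots Y_N$ by the operator $t^{(2m+r+1-2N)r/2}\,\omega^r(\bar T_r\cdots\bar T_{m+r-1})\cdots(\bar T_1\cdots\bar T_m)$.

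By Lemma~\ref{lemmasame}, it suffices to verify the identity when $f(x)=g(x)/R_{[m]\times[m]}(x,y)$ for arbitrary symmetric $g(x)$. For this choice, Corollary~\ref{RelB} rewrites $\Delta_m^t(x)f(x)$ as $\tfrac{(-1)^{\binom{m}{2}}g(x)}{\Delta_m(y)}\,\mathcal{A}_m^{(y)}\mathcal{NF}_m(x,y)$. All the $x$-operators ($\bar T_i$, $\omega$, $\mathcal{S}^t_{m+1,N}$) commute with $\mathcal{A}_m^{(y)}$ and with the $y$-only factor $1/\Delta_m(y)$, while $g(x)$, being fully symmetric, commutes with every $\bar T_i$. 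Pulling these factors through and then applying Lemma~\ref{TnegNF} to collapse the string of $\bar T$'s on $\mathcal{NF}_m(x,y)$ into $t^{-rm}\mathcal{NF}_m^r(x,y)$, the cumulative power of $t$ becomes $t^{r(r+1-2N)/2}$, matching the one appearing in $C_{J,\sigma}$. What remains is to compute
$$\mathcal{A}_m^{(y)}\,\mathcal{S}^t_{m+1,N}\,\omega^r\bigl(g(x)\cdot\mathcal{NF}_m^r(x,y)\bigr)$$
and match the result -- after the division by $\Delta_m(y)$ and by the expansion of $R_{[m]\times[m]}(x,y)^{-1}$ -- with the right-hand side evaluated at $f=g/R_{[m]\times[m]}(x,y)$.

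Using $\omega^r F(x)=F(qx_{N-r+1},\ldots,qx_N,x_1,\ldots,x_{N-r})$ together with the full symmetry of $g$, the action of $\omega^r$ substitutes $qx_{N-r+1},\ldots,qx_N$ into the first $r$ slots of the arguments of $\mathcal{NF}_m^r$. Applying $\mathcal{S}^t_{m+1,N}$ then partitions the sum according to which $r$-subset $J\subseteq[m+1,N]$ these shifted variables are sent to; Lemma~\ref{RelU+} identifies the common factor coming from the subgroup stabilizing this partition as $[N-m-r]_t!\,[r]_t!\,A_{J\times L}(x,x)$, exactly canceling the denominator from the first step, while the $q$-shift that remains is precisely $\tau_J$. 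A coset representative $\sigma\in\mathfrak{S}_N/(\mathfrak{S}_m\times\mathfrak{S}_{m+1,N})$ records how the $m$ remaining (non-shifted) arguments of $\mathcal{NF}_m^r$ are matched against the $x$-variables, yielding the factor $\widetilde A_{J\times\sigma([m])}(x,x)$ and producing the operator $K_\sigma$ that acts on $f(x)$. The constraint $\sigma([m])\cap L=\emptyset$ emerges because only $m$ of those arguments can be specialized outside of $J$.

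The most delicate step is identifying the prefactor $\varPhi(\sigma)$. The factor $1/\Delta_m(y)$, after the residual $\mathcal{A}_m^{(y)}$ antisymmetrization is expanded through Cauchy-type identities, contributes a quotient of Vandermonde-like products which Lemma~\ref{RelDet} rewrites as $\varPhi(\sigma)\,K_\sigma\Delta_m(x)$ times the sign $(-1)^{D_\sigma}$; combining with the remaining $\Delta_m^t$-type factors produces the overall $A_m(x)$. Well-definedness on cosets, $C_{J,\sigma w}=C_{J,\sigma}$ for $w\in\mathfrak{S}_m\times\mathfrak{S}_{m+1,N}$, is built in: $\widetilde A_{J\times\sigma([m])}$ is invariant since $w([m])=[m]$, and \eqref{eqinv} together with $K_{\sigma w}\Delta_m(x)=(-1)^{Z_w}K_\sigma\Delta_m(x)$ makes the two signs cancel. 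The main obstacle is precisely this last step: keeping track of every sign and Vandermonde factor produced by the interaction between $\mathcal{A}_m^{(y)}$, $\omega^r$ and the coset decomposition, and then collapsing them into the single formula for $\varPhi(\sigma)$ via Lemma~\ref{RelDet}.
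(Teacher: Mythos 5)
Your overall architecture coincides with the paper's: reduce to $f=g/R_{[m]\times[m]}(x,y)$ via Lemma~\ref{lemmasame}, convert $e_r(Y_{m+1},\ldots,Y_N)$ into $\mathcal S^t_{m+1,N}\,Y_{N-r+1}\cdots Y_N$ via (the obvious adaptation of) Lemma~\ref{lemmaeY}, then apply Lemma~\ref{rel Y}, Corollary~\ref{RelB} and Lemma~\ref{TnegNF}; the cumulative power of $t$ you compute is also correct. The gap lies in the part you yourself flag as the main obstacle, and your sketch of it does not describe a mechanism that would work. After Lemma~\ref{TnegNF} there is no ``residual $\mathcal A_m^{(y)}$ to be expanded through Cauchy-type identities'': the paper multiplies and divides $\mathcal{NF}_m^r(x,y)$ by $R_{\mathcal M}(x,y)$ for a suitable triangle $\mathcal M$ so that the denominator becomes the full rectangle $R_{[m+r]\times[m]}(x,y)$, pulls the rectangle factors through $\mathcal A_m^{(y)}$, and then applies Lemma~\ref{RelR} a \emph{second} time to the remaining triangle; this consumes the antisymmetrizer entirely, cancels the $1/\Delta_m(y)$, and produces $\Delta^t_{\{r+1,\ldots,m+r\}}(x)$. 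Without this step your expression $\mathcal A_m^{(y)}\,\mathcal S^t_{m+1,N}\,\omega^r\bigl(g\cdot\mathcal{NF}_m^r\bigr)$ still carries an unevaluated antisymmetrizer and there is no route to a closed form.

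Moreover, the coset sum over $\sigma$ and the explicit formula for $C_{J,\sigma}$ are not obtained by ``matching the non-shifted arguments'' of $\mathcal{NF}_m^r$: they are extracted by writing $F(x,y)=\sum_{J,\sigma}C_{J,\sigma}(x)\,\tau_J K_\sigma\bigl(g/R_{[m]\times[m]}(x,y)\bigr)$, equating the coefficients of $\tau_J g(x)$ on both sides, multiplying by $K_w\bigl(R_{[m]\times[m]}(x,y)\bigr)$ and specializing $y_i=x_{w(i)}^{-1}$, which annihilates every coset except $[w]$. The condition $\sigma([m])\cap L=\emptyset$ then emerges because the specialized factor $\varDelta_{L\times w([m])}(x,x)$ vanishes otherwise --- not from the counting argument you give. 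The remaining computation (splitting $J\times[m]$ into the pairs with $i=w(j)$ and $i\neq w(j)$, evaluating each specialized $R$-product, tracking the global sign, and only then invoking Lemma~\ref{RelDet} to recognize $\varPhi(\sigma)$) is precisely the content you defer to ``the main obstacle''; as written, the proposal stops short of the substance of the proposition.
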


\begin{proof}
From Lemma~\ref{eqPHI} and  the relation $K_{\sigma w}(\Delta_m(x)) = (-1)^{Z_w} K_\sigma (\Delta_{\{w(1),\ldots,w(m)\}}(x))$, we have immediately that $C_{J,\sigma}(x)=C_{J,\sigma w}(x)$
if $w \in  \mathfrak S_m \times  \mathfrak S_{m+1,N}$.

We now prove the central claim in the theorem. From Lemma~\ref{lemmasame}, it suffices to show that
\begin{align*}
&  e_{r}(Y_{m+1},\ldots,Y_{N}) \Delta_m^t(x) \frac{g(x)}{R_{[m] \times [m]}(x,y)}  \\
 & \qquad \qquad \qquad \qquad   =   \sum_{\substack{ J \subset [m+1,N] \\ |J| = r }} \, \, 
  \sum_{\substack{ [\sigma] \in  \mathfrak S_N/ ( \mathfrak S_m \times  \mathfrak S_{m+1,N}) \\ \sigma([m]) \cap L= \emptyset }} C_{J,\sigma} \tau_{J} K_\sigma \left( \frac{g(x)}{R_{[m] \times [m]}(x,y)} \right)
\end{align*}
  for every symmetric function $g(x)$.  Hence, the proposition will follow if we can prove that
\begin{align} 
&  e_{r}(Y_{m+1},\ldots,Y_{N})  \mathcal F_m(x,y) g(x)  \nonumber \\
   & \qquad \qquad \qquad \qquad =   \sum_{\substack{ J \subset [m+1,N] \\ |J| = r }}
 \, \, 
  \sum_{\substack{ [\sigma] \in  \mathfrak S_N/ ( \mathfrak S_m \times  \mathfrak S_{m+1,N}) \\ \sigma([m]) \cap L= \emptyset }} C_{J,\sigma}(x) \tau_{J} K_\sigma \left( \frac{g(x)}{R_{[m] \times [m]}(x,y)} \right)  \label{eqtoprove}
\end{align}
for every symmetric function $g(x)$.
The rest of the proof will be devoted to showing that \eqref{eqtoprove} holds.
Let $F(x,y):= e_r(Y_{m+1},\ldots, Y_N) \mathcal{F}_m(x,y) g(x)$.
Since $\mathcal S_{m+1,N}^t$ commutes with $F(x,y)$, we have by \eqref{eqHecke} that
\begin{equation*}
\mathcal S_{m+1,N}^t  \mathcal O(x,y) = [N-m]_t! \,  F(x,y)
\end{equation*}
or, equivalently, that
\begin{equation} \label{23.1}
F(x,y) = \dfrac{1}{[N-m]_t! }  \mathcal S_{m+1,N}^t e_r(Y_{m+1},\ldots, Y_N) \mathcal{F}_m(x,y) g(x) 
\end{equation}
Since $ \mathcal{F}_m(x,y) g(x) $ is symmetric in $x_{m+1},\dots,x_N$, we can use Lemma~\ref{lemmaeY} to rewrite $F(x,y)$ as
\begin{equation*}
  F(x,y) = \dfrac{1}{[N-m]_t! [N-r-m]_t! [r]_t!}  \mathcal S_{m+1,N}^t \sum_{\sigma \in  \mathfrak S_{m+1,N}} T_{\sigma} Y_{N-r+1} \cdots Y_N  \mathcal{F}_m(x,y) g(x) 
\end{equation*}
The relation $ \mathcal S_{m+1,N}^t  T_{\sigma}=t^{\ell(\sigma)} \mathcal S_{m+1,N}^t $ can then be used to get
\begin{equation*}
   F(x,y)  = \dfrac{1}{[N-r-m]_t! [r]_t!}  \mathcal S_{m+1,N}^t   Y_{N-r+1} \cdots Y_N  \mathcal{F}_m(x,y) g(x) 
\end{equation*}
It then follows by Lemma~\ref{rel Y} that 
\begin{equation*}
    F(x,y) = \dfrac{t^{(2m+r+1-2N)r/2}}{[N-r-m]_t! [r]_t!}  \mathcal S_{m+1,N}^t    \omega^{r} (\overline{T}_r \cdots \overline{T}_{m+r-1}) \cdots (\overline{T}_1 \cdots \overline{T}_{m})  \mathcal{F}_m(x,y) g(x) 
\end{equation*}
From Corollary~\ref{RelB}, we can use
$$\mathcal{F}_m(x,y) =\dfrac{(-1)^{\binom{m}{2}}}{\Delta_m(y)} \mathcal A^{(y)}_m \mathcal{NF}_m(x,y)$$
to deduce that
\begin{equation*}
    F(x,y) = \dfrac{(-1)^{\binom{m}{2}}}{\Delta_m(y)} \dfrac{t^{(2m+r+1-2N)r/2}}{[N-r-m]_t! [r]_t!}   \mathcal S_{m+1,N}^t     \omega^{r}  \mathcal A^{(y)}_m (\overline{T}_r \cdots \overline{T}_{m+r-1}) \cdots (\overline{T}_1 \cdots \overline{T}_{m})  \mathcal{NF}_m(x,y) g(x) 
\end{equation*}
Since $g(x)$ commutes with all $\bar T_i$'s we can use Lemma~\ref{TnegNF} to get
\begin{equation*}
    F(x,y) =  \frac{p(t)}{(-1)^{\binom{m}{2}}\Delta_m(y)} \mathcal S_{m+1,N}^t     \omega^{r}  \mathcal A^{(y)}_m  \mathcal{NF}_m^r(x,y) g(x) 
\end{equation*}
where, for simplicity,, we have set
$$p(t)= \dfrac{t^{r(r+1-2N)/2}}{[N-r-m]_t! [r]_t!} $$
Now, we will multiply and divide the quantity $\mathcal{NF}_m^r(x,y)$ by
 $R_{\mathcal{M}}(x,y)$, where 
$\mathcal{M}$ is the triangle with vertices $\{(r+2,m),(m+r,m),(m+r,2)\}$.
This way, the denominator $R_{\Bset_r'}$ becomes a rectangle and we have
\begin{equation*}
F(x,y) = \frac{p(t)}{(-1)^{\binom{m}{2}}\Delta_m(y)}  \mathcal S_{m+1,N}^t    \omega^r \mathcal{A}^{(y)}_m \dfrac{R_{\Bset_r} (x,ty) R_{\mathcal{M}}(x,y)}{R_{[m+r] \times [m]}(x,y)} g(x)
\end{equation*}
Observe that the rectangle
$[r] \times [m] \subseteq \Bset_r$ is such that $R_{[r] \times [m]}(x,ty)$ commutes with
$\mathcal{A}^{(y)}_m$.  It is also obvious that $R_{[m+r] \times [m]}$ commutes with
$\mathcal{A}^{(y)}_m$.  Hence
\begin{equation*}
F(x,y) = \frac{p(t)}{(-1)^{\binom{m}{2}}\Delta_m(y)}  \mathcal S_{m+1,N}^t   \omega^r \dfrac{R_{[r] \times [m]}(x,ty)}{R_{[m+r] \times [m]}(x,y)} \mathcal{A}^{(y)}_m  R_{\Bset_r\setminus([r] \times [m])}(x,ty) R_{\mathcal{M}}(x,y) g(x)
\end{equation*}
But $\Bset_r\setminus ([r] \times [m])$ is the triangle with vertices $(r+1,m-1),(r+1,1)$, and $(m+r-1,1)$.  We can thus use Proposition~\ref{RelR} to get
\begin{equation*}
F(x,y) = p(t)  \mathcal S_{m+1,N}^t   \omega^r \dfrac{R_{[r] \times [m]}(x,ty)}{R_{[m+r] \times [m]}(x,y)} \Delta^t_{\{r+1,\ldots,m+r\}}(x) g(x)
\end{equation*}
It will prove convenient to multiply and divide by 
$R_{[m+r+1,N] \times [m]}(x,y)$ so that $R_{[N] \times [m]}(x,y)$ appears in the denominator. This yields
\begin{equation*}
F(x,y) = p(t)  \mathcal S_{m+1,N}^t   \omega^r \dfrac{R_{[r] \times [m]}(x,ty)R_{[m+r+1,N] \times [m]}(x,y) }{R_{[N]\times[m]}(x,y)} \Delta^t_{\{r+1,\ldots,m+r\}}(x) g(x)
\end{equation*}
Applying $\omega^r$ (which amounts to the permutation that maps $j \mapsto j-r$ modulo $N$ followed by $\tau_{N-r+1,N}=\tau_{N-r+1} \tau_{N-r+2} \cdots \tau_N$) we obtain that
\begin{equation} \label{eq tauJ}
F(x,y) = p(t)  \mathcal S_{m+1,N}^t   \tau_{N-r+1,N} \dfrac{R_{[N-r+1,N] \times [m]}(x,ty)R_{[m+1,N-r] \times [m]}(x,y) }{R_{[N]\times[m]}(x,y)} \Delta^t_{m}(x) g(x)
\end{equation}
We should stress at this point that $F(x,y)/ \Delta^t_{m}(x)$ is both symmetric in $x_1,\dots,x_m$ and in $x_{m+1},\dots,x_{N}$.  This is because applying $\mathcal S_{m+1,N}^t$ ensures that the result is symmetric in $x_{m+1},\dots,x_{N}$ while the symmetry in $x_1,\dots,x_m$ is straightforward given that
$\Delta^t_{m}(x)$ commutes with $ \mathcal S_{m+1,N}^t   \tau_{N-r+1,N}$.

Now,  we need to use the expansion 
\begin{equation} \label{Kexpa}
 \mathcal S_{m+1,N}^t  = \left( \sum_{\substack{ \sigma \in  \mathfrak S_{m+1,N}} } K_\sigma \right)  \left(\prod_{m+1 \leq i<j \leq N} \dfrac{x_i-t x_j}{x_i-x_j} \right) 
\end{equation}
in \eqref{eq tauJ}. From the symmetry of $F(x,y)$, it will suffice to focus on the
term 
$\tau_{N-r+1,N}$ as the remaining terms $\tau_J$ for  $J \subseteq [m+1,N]$ and $|J|=r$ will be obtained by symmetry (only those terms can occur since
 $\mathcal S_{m+1,N}^t$ only contains $K_\sigma$'s such that $\sigma \in \mathfrak S_{m+1,N}$). For simplicity, we will let $J_0=[N-r+1,N]$ and $L_0=[m+1,N-r]$.
When we only focus on  the term $\tau_{J_0}=\tau_{N-r+1,N}$, we need to sum over the $\sigma$'s in \eqref{Kexpa} such that $\sigma (J_0)=J_0$. Observe that those permutations leave the expression to the right of $ \mathcal S_{m+1,N}^t$   invariant in \eqref{eq tauJ}.  Using  Lemma~\ref{RelU+} (in the case $J=[N-r+1,N]$ and with $[1,N]$ replaced by $[m+1,N-r]$) to obtain 
$$
\sum_{\substack{ \sigma \in  \mathfrak S_{m+1,N} \\ \sigma (J_0)=J_0 }} K_\sigma
\left(\prod_{m+1 \leq i<j \leq N} \dfrac{x_i-t x_j}{x_i-x_j} \right)
= [N-m-r]_t! [r]_t! A_{J_0\times L_0}(x,x)
$$
we thus conclude that the term in ${\tau_{J_0}}$ in $F(x,y)$ is given by
\begin{equation*}
t^{r(r+1-2N)/2} A_{J_0\times L_0}(x,x)  \tau_{J_0} \dfrac{R_{{J_0} \times [m]}(x,ty)R_{L_0 \times [m]}(x,y) }{R_{[N]\times[m]}(x,y)} \Delta^t_{m}(x) g(x)
\end{equation*}
By symmetry, we thus get that
\begin{equation*}
F(x,y) =t^{r(r+1-2N)/2} \sum_{J \subseteq [m+1,N] \, ; \, |J|=r}  A_{J \times L}(x,x)  \tau_J \dfrac{R_{J \times [m]}(x,ty)R_{L \times [m]}(x,y) }{R_{[N]\times[m]}(x,y)} \Delta^t_m(x) g(x)
\end{equation*}
where $L=[m+1,N] \setminus J$.

  Now, we want to expand $F(x,y)$ as
  $$
F(x,y) = \sum_{\substack{J\subseteq [m+1,N] \\ |J|=r}} \, \,  \sum_{[\sigma] \in  \mathfrak S_N/ ( \mathfrak S_m \times  \mathfrak S_{m+1,N}) } C_{J,\sigma}(x) \tau_J K_\sigma \left( \dfrac{g(x)}{R_{[m]\times [m]}(x,y)} \right)
  $$
for some coefficients $C_{J,\sigma}(x)$.
Since $g(x)$ is an arbitrary symmetric functions, the terms in $\tau_J g(x)$ need to be equal on both sides.  We thus have that
$$
t^{r(r+1-2N)/2} A_{J\times L}(q^{-1}x,x)   \dfrac{R_{{J} \times [m]}(x,ty)R_{L \times [m]}(x,y) }{R_{[N]\times[m]}(x,y)} \Delta^t_{m}(x) = \sum_{\sigma \in  \mathfrak S_{m,N}} \left(\tau_J^{-1} C_{J,\sigma}(x) \right) K_\sigma \left( \dfrac{1}{R_{[m]\times [m]}(x,y)} \right)
$$
The coefficient $\tau_J^{-1} C_{J,w}(x)$ can be obtained by multiplying by $K_w\bigl(R_{[m] \times [m]}(x,y) \bigr)$ and then taking the specialization $y_i=x^{-1}_{w(i)}$ for $i=1,\dots,m$ (this way, all the terms such that $\sigma \neq w$ cancel on the r.h.s.). Hence
$$
\tau_J^{-1} C_{J,w}(x)= t^{r(r+1-2N)/2} A_{J\times L}(q^{-1}x,x)   \dfrac{R_{{J} \times [m]}(x,ty)R_{L \times [m]}(x,y) }{R_{[N]\times[m]}(x,y)} \Delta^t_{m}(x)  K_w\bigl(R_{[m] \times [m]}(x,y) \bigr) \Bigg|_{y_i=x_{w(i)}^{-1}}
$$
or, equivalently,
\begin{equation} \label{lastbig}
\tau_J^{-1} C_{J,w}(x)= t^{r(r+1-2N)/2} A_{J\times L}(q^{-1}x,x)   \dfrac{R_{{J} \times [m]}(x,ty)R_{L \times [m]}(x,y) }{R_{([N]\setminus w([m]))\times[m]}(x,y)} \Delta^t_{m}(x)   \Bigg|_{y_i=x_{w(i)}^{-1}}
\end{equation}
When considering $y_i = x_{w(i)}^{-1}$, the following holds:
 \begin{equation*}
R(x_i,ay_j)= \left\{ \begin{array}{lcc}
             -\left(\dfrac{ax_i - x_{\mu(j)}}{x_{\mu(j)}}\right) &   {\rm if}  & i \neq w(j) \\
             \\ -(a-1) &  {\rm if} & i = w(j)
             \end{array}
   \right.
 \end{equation*}
The extra sign that the specialization generates on the r.h.s. of  \eqref{lastbig} is then
\begin{equation*}
   \#(J \times [m]) + \#(L \times [m]) + \#\bigl((([N]\setminus w([m])) \times [m]\bigr)  
     = \#J \cdot m + \#L \cdot m + (N-m)\cdot m 
\end{equation*}
which is equal to $2(N-m)m$.  The extra sign, being even, can thus be ignored.

We now split the set $J \times [m]$ as the disjoint union of $G_1$ and $G_2$, where
$$
G_1 = \{(i,j) \in J \times [m] \ |\ i \neq w(j)\} \quad {\rm and} \quad  G_2 = \{(i,j) \in J \times [m] \ |\ i = w(j)\}
$$
Hence, after  multiplying and dividing the r.h.s. of \eqref{lastbig} by $R_{G_1}(x,y)$, we obtain 
$$
\tau_J^{-1} C_{J,w}(x)= t^{r(r+1-2N)/2} A_{J\times L}(q^{-1}x,x)
\dfrac{R_{G_1}(x,ty)}{R_{G_1}(x,y)} R_{G_2}(x,ty)\dfrac{R_{G_1}(x,y)R_{L \times [m]}(x,y) }{R_{([N]\setminus w([m]))\times[m]}(x,y)} \Delta^t_{m}(x)   \Bigg|_{y_i=x_{w(i)}^{-1}}
$$
It is easy to check that
$$
  \dfrac{R_{G_1}(x,ty)}{R_{G_1}(x,y)} \Bigg |_{y_i=x_{w(i)}^{-1}}=  \widetilde A_{J \times w([m])}(x,x),  \quad   R_{G_2}(x,ty) \Big |_{y_i=x_{w(i)}^{-1}}= (t-1)^{\# (J \cap w([m]))}
$$
as well as
$$
R_{G_1}(x,y)  \Big |_{y_i=x_{w(i)}^{-1}} =  \dfrac{\widetilde{\varDelta}_{J \times w([m])}(x,x)  }{(x_{w(1)} \cdots x_{w(m)})^{\#J}}  \prod_{i \in J \cap w([m])} { x_i} \, , \quad
 R_{L \times [m]}(x,y) \Big |_{y_i=x_{w(i)}^{-1}} =    \dfrac{\widetilde{\varDelta}_{L \times w([m])}(x,x)  }{(x_{w(1)} \cdots x_{w(m)})^{\#L}} 
$$
and
$$
R_{([N] \setminus w([m])) \times [m]}(x,y)  \Big |_{y_i=x_{w(i)}^{-1}} =    \dfrac{ \varDelta_{([N]\setminus w([m])) \times w([m])}(x,x) }{(x_{w(1)} \cdots x_{w(m)})^{N-\#L-\#J}} 
$$
Hence, using $J \cap w([m])=\BB_w^c$, we obtain
%
    


\begin{equation*}
\tau_J^{-1} C_{J,w}(x) =  t^{r(r+1-2N)/2} A_{J\times L}(q^{-1}x,x)   \widetilde{A}_{J \times w([m])} \left( \displaystyle \prod_{i \in \BB_w^c} { x_i} (t-1) \right)  \dfrac{\widetilde{\varDelta}_{J \times w([m])} \varDelta_{L \times w([m])} }{ \varDelta_{([N]\setminus w([m])) \times w([m])}}  \Delta^t_{m}(x) 
\end{equation*}
where the dependency is always in the variables $(x,x)$ when not specified.
We deduce immediately that $C_{J,w}=0$ whenever $L \cap w([m])\neq \emptyset$ since  $\varDelta_{L \times w([m])}(x,x)=0$ in that case.  Finally, using
 $[N] = \AA_{w} \cup \AA_{w}^c \cup \BB_{w} \cup \BB_{w}^c$ and $w([m]) = \AA_{w} \cup \BB_{w}^c$, we get that
\begin{equation*}
\tau_J^{-1} C_{J,w}(x) =  t^{r(r+1-2N)/2} A_{J\times L}(q^{-1}x,x)  \widetilde{A}_{J \times w([m])} \left( \displaystyle \prod_{i \in B_\mu^c} x_i(t-1) \right) \dfrac{\widetilde{\varDelta}_{\BB^c_{w} \times w([m])} }{ \varDelta_{ \AA^c_{w} \times w([m])}}  \Delta^t_{m}
\end{equation*}
when $L \cap w([m])= \emptyset$.  From  Lemma~\ref{RelDet}, this implies that
\begin{equation*}
C_{J,w}(x) =   t^{r(r+1-2N)/2} A_m(x) A_{J\times L}(x,x)  \tau_{J} \bigl( \widetilde{A}_{J \times w([m])}(x,x) \varPhi(w) K_w(\Delta_{m}) \bigr).
\end{equation*}
when $L \cap w([m])= \emptyset$. This proves \eqref{eqtoprove} and the proposition thus holds.
\end{proof}

\section{Pieri rules}
Before proving the Pieri rules for the bisymmetric Macdonald polynomials, we first need to establish a crucial lemma.

We will say that a composition $(\Lambda_1,\dots,\Lambda_N)$ is biordered if
$\Lambda_1\geq \Lambda_2 \geq \cdots \geq \Lambda_m$ and $\Lambda_{m+1}\geq \Lambda_2 \geq \cdots \geq \Lambda_N$.  Note that if $\Lambda$ is not biordered then there exists a permutation $\sigma \in  \mathfrak S_m \times  \mathfrak S_{m+1,N}$ such that $\sigma \Lambda$ is biordered.  For $J\subseteq [N]$, we will also let $\tau_J \Lambda= \Lambda + \varepsilon^J$, where $\varepsilon^J_i=1$ if $i \in J$ and $0$ otherwise.
\begin{lemma} \label{lemmaOmega} Suppose that
 $\sigma\in \mathfrak S_N$ and
  $J \subseteq [m+1,N]$ are such that $\sigma([m]) \cap L=\emptyset$, where we recall that $L=[m+1,N]\setminus J$.
  Let $(\Lambda,w)$ generate a superevaluation, and suppose that the composition  $\Omega=\sigma^{-1}\tau_J(\Lambda+(1^m))-(1^m)$ is biordered. The following holds:
\begin{enumerate} 
\item
  If $(\Omega,w\sigma)$  does not generate a superevaluation
  then $u_\Lambda^+(C_{J,\sigma})=0$, where $C_{J,\sigma}(x)$ is such as defined in Proposition~\ref{Op+}.
   \item 
     Suppose that $(\Omega,w\sigma)$  
     generates a superevaluation.  If $\delta \in \mathfrak S_N$ is also such that $(\Omega,w\delta)$  
     generates a superevaluation then    
   $\sigma(\mathfrak S_m \times \mathfrak S_{m+1,N})=\delta(\mathfrak S_m \times \mathfrak S_{m+1,N})$ in $\mathfrak S_N/(\mathfrak S_m \times \mathfrak S_{m+1,N})$.
 \item If 
   $I \subseteq [m+1,N]$  is such that $\Omega=\sigma^{-1}\tau_I(\Lambda+(1^m))-(1^m)$, then $I=J$.
 \end{enumerate} 
\end{lemma}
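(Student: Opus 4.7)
We treat the three claims in reverse order, since (3) and (2) are formal consequences of the definitions while (1) is the technical heart of the lemma. For (3), applying $\sigma$ to the equality $\sigma^{-1}\tau_I(\Lambda+(1^m)) = \Omega + (1^m) = \sigma^{-1}\tau_J(\Lambda+(1^m))$ yields $\tau_I(\Lambda+(1^m)) = \tau_J(\Lambda+(1^m))$, i.e.\ $\varepsilon^I = \varepsilon^J$, hence $I = J$. For (2), setting $\pi := \sigma^{-1}\delta$, the two superevaluation conditions for $(\Omega, w\sigma)$ and $(\Omega, w\delta)$ imply both $\pi\Omega = \Omega$ and $\pi(\Omega+(1^m)) = \Omega+(1^m)$; subtracting gives $\pi(1^m) = (1^m)$, so $\pi$ preserves $[m]$ and therefore lies in $\mathfrak{S}_m \times \mathfrak{S}_{m+1,N}$.

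For (1), the first step is to use that $(\Lambda, w)$ generates a superevaluation together with the identity $\sigma(\Omega+(1^m)) = \tau_J(\Lambda+(1^m)) = \Lambda+(1^m)+\varepsilon^J$ to compute
$$w\sigma(\Omega+(1^m)) = \Lambda^{\circledast} + \varepsilon^{w(J)}, \qquad w\sigma\Omega = \Lambda^{\circledast} + \varepsilon^{w(J)} - \varepsilon^{w\sigma([m])}.$$
Thus $(\Omega, w\sigma)$ generates a superevaluation precisely when both of these vectors are weakly decreasing (and then necessarily coincide with $\Omega^{\circledast}$ and $\Omega^*$). The plan is to exhibit, for each failure mode, an explicit factor in $C_{J,\sigma}(x)$ that evaluates to zero under $u_\Lambda^+$, using that $x_i \mapsto q^{\Lambda^{\circledast}_{w(i)}}t^{1-w(i)}$ and that $\tau_J$ further sends $x_i \mapsto qx_i$ for $i\in J$.

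If $\Lambda^{\circledast} + \varepsilon^{w(J)}$ fails to be weakly decreasing, there exists $k \in w(J)$ with $k-1 \notin w(J)$ and $\Lambda^{\circledast}_{k-1} = \Lambda^{\circledast}_k$. Writing $a = w^{-1}(k) \in J$ and $b = w^{-1}(k-1)$, the hypothesis $\sigma([m])\cap L = \emptyset$ forces $b \in L$ or $b \in \sigma([m])$. In the first case, $A_{J\times L}(x,x)$ contains the factor $(tx_a - x_b)$ which vanishes under $u_\Lambda^+$ since $t\cdot q^{\Lambda^{\circledast}_k}t^{1-k} = q^{\Lambda^{\circledast}_{k-1}}t^{2-k}$; in the second case, the shifted factor $\tau_J(tx_a - x_b) = (tqx_a - x_b)$ inside $\tau_J\widetilde{A}_{J\times\sigma([m])}$ vanishes by the same identity. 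If instead the first condition holds but the second fails, the vanishing must come from within $\varPhi(\sigma)K_\sigma(\Delta_m(x))$ or from $\widetilde{A}_{J\times\sigma([m])}$ after $\tau_J$, and one tracks it using the explicit form of $\varPhi(\sigma)$ in Lemma~\ref{RelDet}. The main obstacle is precisely this last case analysis: the set $w\sigma([m])$ may intersect $w(J)$ in combinatorially delicate ways, and one must carefully verify, using $\sigma([m])\cap L = \emptyset$ and the biordered hypothesis on $\Omega$, that every failure of the second condition produces a coincidence of evaluated variables annihilating either $K_\sigma(\Delta_m(x))$ in the numerator of $\varPhi(\sigma)$, or a linear factor in $\widetilde{A}_{J\times\sigma([m])}$ after $\tau_J$.
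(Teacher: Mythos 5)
Parts (2) and (3) of your plan are correct and coincide with the paper's argument. Part (1), however, has genuine gaps. First, your claim that ``$(\Omega,w\sigma)$ generates a superevaluation precisely when both vectors $\Lambda^{\circledast}+\varepsilon^{w(J)}$ and $\Lambda^{\circledast}+\varepsilon^{w(J)}-\varepsilon^{w\sigma([m])}$ are weakly decreasing'' is false: generating a superevaluation also requires $\Omega$ to be a superpartition, i.e.\ $\Omega_1>\dots>\Omega_m$ strictly. A biordered $\Omega$ with $\Omega_a=\Omega_{a+1}$ for some $a\in[m-1]$ can perfectly well have both sorted vectors be partitions (e.g.\ $\Omega=(2,2;1)$), so this failure mode is invisible to your criterion. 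The paper devotes the first half of its proof of (1) to exactly this case, splitting according to whether $\sigma(a),\sigma(b)$ both lie in $J$ or one lies in $[m]$, and in each case exhibiting a factor $A_{\sigma(b),\sigma(a)}$ of $\widetilde{A}_{J\times\sigma([m])}$ that vanishes after $\tau_J$ and $u_\Lambda^+$. Your proposal omits this entirely.

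Second, within the failure mode where $\Lambda^{\circledast}+\varepsilon^{w(J)}$ is not weakly decreasing, your case split and the vanishing claim are both wrong. Since $b=w^{-1}(k-1)\notin J$, the exhaustive dichotomy is $b\in[m]$ or $b\in L$, not ``$b\in L$ or $b\in\sigma([m])$''; the correct argument (as in the paper) is that $b\in[m]$ is impossible because it would force $\Lambda^*_{k-1}<\Lambda^*_k$, so only the $A_{J\times L}$ factor is ever needed. Moreover, in your ``second case'' the factor $\tau_J(tx_a-x_b)$ does \emph{not} vanish: with $\Lambda^{\circledast}_{k-1}=\Lambda^{\circledast}_k$ it evaluates to $q^{\Lambda^{\circledast}_k+1}t^{2-k}-q^{\Lambda^{\circledast}_{k-1}}t^{2-k}=q^{\Lambda^{\circledast}_k}t^{2-k}(q-1)\neq 0$ (the $q$-shifted factors only vanish when the relevant parts of $\Lambda^{\circledast}$ differ by exactly $1$, which is what happens in the \emph{other} failure mode). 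Finally, the second failure mode ($w\sigma\Omega\neq\Omega^*$) is left as an acknowledged ``obstacle'' rather than proved; the paper resolves it by showing that of the two possible diagrammatic configurations, one contradicts $\sigma([m])\cap L=\emptyset$ (so cannot occur), and the other, with $1+b=a$ and $\Lambda^{\circledast}_a+1=\Lambda^{\circledast}_b$, $a\in w(J)$, $b\in w\sigma([m])$, produces a factor $\tau_J A_{j,s}$ of $\tau_J\widetilde{A}_{J\times\sigma([m])}$ whose numerator $q^{\Lambda^{\circledast}_a+1}t^{2-a}-q^{\Lambda^{\circledast}_b}t^{1-b}$ vanishes. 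As it stands, the core of part (1) is not established.
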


\begin{proof}
  We first show that ${\it (1)}$ holds.  Suppose first that $\Omega$
  is not a superpartition.  Given that $\Omega$ is biordered, this can only happen if $\Omega_a=\Omega_{a+1}$
for a given $a\in [m-1]$, which can be visualized as
\\
\begin{equation}
{\small \tableau[scY]{ \bl a & & & & \bl \tcercle{} \\  \bl b & & & &\bl \tcercle{} }}
\end{equation}
\\
with $b=a+1$.  Now, $\Omega=\sigma^{-1}\tau_J(\Lambda+(1^m))-(1^m)$ translates in coordinates to 
\begin{equation}   \label{OL}
  \bigl(\Omega+(1)^m\bigr)_a = \bigl(\Lambda + (1^m)\bigr)_{\sigma(a)}+\varepsilon_{\sigma(a)}^J
\end{equation}
where $\varepsilon_{i}^J=1$ if $i \in J$ and 0 otherwise. 
Hence there are two possible cases:
(i) $\sigma(a), \sigma(b) \in [m+1,N]$ or (ii) $\sigma(a) \in [m], \sigma(b) \in [m+1,N]$
(the case  $\sigma(a) \in [m+1,N], \sigma(b) \in [m]$ is equivalent).

Consider first the case (i).   We have that $\sigma(a), \sigma(b) \in J$ since
$\sigma([m]) \cap L=\emptyset$  by hypothesis.
We thus deduce from \eqref{OL} that $\Omega_a=\Lambda_{\sigma(a)}$ and $\Omega_b=\Lambda_{\sigma(b)}$, which implies that $\Lambda_{\sigma(a)}=\Lambda_{\sigma(b)}$.
This in turn implies that the permutation $w$ can be chosen such that $w\sigma(b)=w\sigma(a)+1$, in which case we will have $\Lambda^\circledast_{w\sigma(a)}=\Lambda_{\sigma(a)}+1$ and $\Lambda^\circledast_{w\sigma(b)}=\Lambda_{\sigma(b)}+1$. 
Hence the term $\widetilde A_{J \times \sigma([m])}(x,x)$ in $C_{J,\sigma}(x)$ contains a factor $A_{\sigma(b),\sigma(a)}(x)$ such that
$$
u_\Lambda^+(\tau_J A_{\sigma(b),\sigma(a)}(x))= u_\Lambda^+\left( \frac{qtx_{\sigma(b)}-qx_{\sigma(a)}}{qx_{\sigma(b)}-qx_{\sigma(a)}} \right)=  \frac{q^{\Lambda_{\sigma(b)}+2}t^{2-w\sigma(b)}-q^{\Lambda_{\sigma(a)}+2}t^{1-w\sigma(a)}}{q^{\Lambda_{\sigma(b)}+2}t^{1-w\sigma(b)}-q^{\Lambda_{\sigma(a)}+2}t^{1-w\sigma(a)}}=0
$$
and thus  $C_{J,\sigma}(x)$  vanishes in that case. 

The case (ii) is almost identical.   We have that $\sigma(a) \in [m]$ and  $\sigma(b) \in J$ since $\sigma([m]) \cap L=\emptyset$  by hypothesis.
\\
\begin{equation*}
{\footnotesize\tableau[scY]{ \bl \sigma(a) & \bl & & & & \bl \tcercle{} \\ \bl & \bl  & &  \bl \tcercle{} \\ \bl & \bl & \bl & \bl \vspace{-2ex}\vdots \\\bl &  \bl & \bl \\  \bl \sigma(b)& \bl & & &   }}
\end{equation*}
\\
We thus deduce from \eqref{OL} that $\Omega_a=\Lambda_{\sigma(a)}$ and $\Omega_b=\Lambda_{\sigma(b)}$, which implies that $\Lambda_{\sigma(a)}=\Lambda_{\sigma(b)}$.
This in turn implies that the permutation $w$ can be chosen such that $w\sigma(b)=w\sigma(a)+1$, in which case we will have $\Lambda^\circledast_{w\sigma(a)}=\Lambda_{\sigma(a)}+1$ and $\Lambda^\circledast_{w\sigma(b)}=\Lambda_{\sigma(b)}$. 
Hence the term $\widetilde A_{J \times \sigma([m])}(x,x)$ in $C_{J,\sigma}(x)$ contains a factor $A_{\sigma(b),\sigma(a)}(x)$ such that
$$
u_\Lambda^+(\tau_J A_{\sigma(b),\sigma(a)}(x))= u_\Lambda^+\left( \frac{qtx_{\sigma(b)}-x_{\sigma(a)}}{qx_{\sigma(b)}-x_{\sigma(a)}} \right)=  \frac{q^{\Lambda_{\sigma(b)}+1}t^{2-w\sigma(b)}-q^{\Lambda_{\sigma(a)}+1}t^{1-w\sigma(a)}}{q^{\Lambda_{\sigma(b)}+1}t^{1-w\sigma(b)}-q^{\Lambda_{\sigma(a)}+1}t^{1-w\sigma(a)}}=0
$$
and thus  $C_{J,\sigma}(x)$ also vanishes in that case. 

We now have to show that $C_{J,\sigma}(x)=0$ when any of the two following cases occurs:
\begin{enumerate}
    \item $w \sigma (\Omega + (1^m)) \neq \Omega^{\circledast} $
    \item $w \sigma (\Omega)  \neq \Omega^{*} $
\end{enumerate}
In the case (1), we have
$$\Omega^{\circledast} \neq w \sigma (\Omega + (1^m)) = w \tau_J(\Lambda + (1^m)) = \tau_{w(J)} \Lambda^{\circledast}$$
This can only happen if  $\tau_{w(J)} \Lambda^{\circledast}$ is not a partition, that is, if we have the following situation:
\\
\begin{equation*}
{\small \tableau[scY]{ \bl b & & & \\  \bl a & & & &\fl}}
\end{equation*}
\\
where $1+b=a$, $\Lambda^{\circledast}_a = \Lambda^{\circledast}_b $, $a \in w(J)$ and $b\not \in w(J)$. Note that $b$ cannot belong to $w([m])$ since otherwise
the diagram of $\Lambda^*$ would be of the form 
\\
\begin{equation}
{\small \tableau[scY]{ \bl b & &  & \bl   \\  \bl a & & & }}
\end{equation}
\\
and thus not a partition.
We therefore conclude that $b \in w(L)$. Hence there exist
$j \in J, l \in L$ such that $a = w(j)$ and $b = w(l)$,
which implies that the term $A_{J \times L}(x,x)$ in $C_{J,\sigma}(x)$ contains a factor $A_{j,l}(x)$ such that
$$
u_\Lambda^+(A_{j,l}(x))= u_\Lambda^+\left( \frac{tx_{j}-x_{l}}{x_j-x_{l}} \right)   = \dfrac{q^{\Lambda^{\circledast}_a}t^{2-a}-q^{\Lambda^{\circledast}_b}t^{1-b}}{q^{\Lambda^{\circledast}_a}t^{1-a}-q^{\Lambda^{\circledast}_b}t^{1-b}} = 0
$$
as wanted.

We finally need to consider case (2) which amounts to
\begin{equation} \label{eq(2)}
  \Omega^{*} \neq w \sigma (\Omega + (1^m)-(1^m)) = w (\tau_J(\Lambda + (1^m))- \mu (1^m)) = \tau_{w(J)} \Lambda^{\circledast}-w \sigma (1^m)
\end{equation}  
From Case (1) we know that  $C_{J,\sigma}(x)=0$ if    $\tau_{w(J)} \Lambda^{\circledast}$ is not a partition from which we can  suppose that $\tau_{w(J)} \Lambda^{\circledast}$ is a partition. Hence \eqref{eq(2)} will hold in the two following situations:
\\
\begin{equation} \label{twocases}
  {\small \tableau[scY]{ \bl b & & & X \\  \bl a & & & }}
\qquad {\rm and} \qquad   {\small \tableau[scY]{ \bl b & & & & X  \\  \bl a & & & & \fl }}
\end{equation}
\\
where $X$ stands for a removed cell (the diagrams of $\Lambda^\circledast$ are those without $X$'s and black square).  We first show that the case to the left cannot occur.  Indeed, we have in that case that
\begin{enumerate}
\item[{\it a})] $b \not \in w(J)$ 
\item[{\it b})] $b \in w \sigma ([m])$ 
\item[{\it c})] $b \not \in w ([m])$  (otherwise $\Lambda$ would not be a superpartition)
\end{enumerate}  
From {\it b}) and {\it c}) we deduce that $b \not \in w (\sigma([m]) \cap [m])$. 
Therefore  $b \in w (J)$ since $\sigma([m]) \cap L=\emptyset$ by hypothesis. But this contradicts   {\it a}).

Finally, we consider the case to the right in \eqref{twocases}. We have
$1+b=a$, $\Lambda^{\circledast}_a+1 = \Lambda^{\circledast}_b $,  $b \in w \sigma ([m])$ and  $a \in w (J)$.  Therefore, there exist  $j \in J, s \in \sigma([m])$
such that  $a = w(j)$ and  $b = w(s)$.
The term $\widetilde A_{J \times \sigma([m])}(x,x)$ in $C_{J,\sigma}(x)$ thus contains a factor $A_{j,s}(x)$ such that
$$
u_\Lambda^+(\tau_J A_{j,s}(x))= u_\Lambda^+\left( \frac{qtx_{j}-x_{s}}{qx_{j}-x_{s}} \right)= \dfrac{q^{\Lambda^{\circledast}_a+1}t^{2-a}-q^{\Lambda^{\circledast}_b}t^{1-b}}{q^{\Lambda^{\circledast}_a}t^{1-a}-q^{\Lambda^{\circledast}_b}t^{1-b}} = 0
$$
which completes the proof of part ${\it (1)}$ of the Lemma.

Part ${\it (2)}$ and ${\it (3)}$ of the lemma are much simpler to prove.
We start with (2).  Since both $(\Omega,w\delta)$ and  $(\Omega,w\sigma)$ generate a superevaluation, we have that
$$\Omega^* = w \delta \Omega=w\sigma \Omega \qquad  {\rm and} \qquad  \Omega^\circledast = w \delta (\Omega+(1^m))=w\sigma (\Omega+(1^m))
$$
Hence, $\sigma^{-1} \delta \Omega= \Omega$ and $\sigma^{-1} \delta (\Omega+(1^m))= (\Omega+(1^m))$.  We thus conclude that 
$\sigma^{-1} \delta \in \mathfrak S_m \times \mathfrak S_{m+1,N}$ or equivalently, that $\sigma(\mathfrak S_m \times \mathfrak S_{m+1,N})=\delta(\mathfrak S_m \times \mathfrak S_{m+1,N})$, as wanted.

As for ${\it (3)}$, we have
$$
\Omega=\sigma^{-1}\tau_J(\Lambda+(1^m))-(1^m)=\sigma^{-1}\tau_I(\Lambda+(1^m))-(1^m) \iff \tau_{I}^{-1}\tau_J(\Lambda+(1^m))=\Lambda+(1^m)) 
$$
which implies that $I=J$.
\end{proof}
We can now state our main theorem.  It is important to note that a more explicit characterization 
of the indexing superpartitions appearing in the Pieri rules  will be provided in Corollary~\ref{coroPieri}. Also recall that the evaluation $u_{\Lambda_0^+}(\mathcal P_\Lambda)$ was given in \eqref{ev+mac}.
\begin{theorem} \label{theoPieri} For $r\in \{1,\dots,N-m\}$, the bisymmetric Macdonald polynomial
$\mathcal P_{\Lambda}(x;q,t)$ obeys the following Pieri rules
\begin{equation*}
  e_{r}(x_{m+1},\dots,x_N)    {\mathcal P}_{\Lambda}(x;q,t) = \sum_\Omega \left(\frac{u_\Lambda^+(C_{J,\sigma})}{ u_\Lambda^+(\Delta_m^t)} \frac{u_{\Lambda_0^+}(\mathcal P_\Lambda)}{u_{\Lambda_0^+}(\mathcal P_\Omega)} \right)   {\mathcal P}_{\Omega}(x,q,t)  
\end{equation*}
where the coefficients $C_{J,\sigma}(x)$ were obtained explicitly in Proposition~\ref{Op+} and 
where the sum is over all superpartitions $\Omega$ such that there exists a
$\sigma \in \mathfrak S_N$ and
a $J \subseteq [m+1,N]$ of size $r$ such that
\begin{itemize}
\item  $\sigma(\Omega+(1^m))=\tau_J(\Lambda+(1^m))$ 
\item $\sigma([m]) \cap L = \emptyset$, where $L=[m+1,\dots,N]\setminus J$ 
\item  $(\Omega,w\sigma)$ is a superevaluation, where $w$ is such that $(\Lambda,w)$ generates a superevaluation
\end{itemize}
\end{theorem}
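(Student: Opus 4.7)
\smallskip

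The plan is to combine the operator identity of Proposition~\ref{Op+} with the eigenvalue equation of Lemma~\ref{AutVal}, evaluate the resulting polynomial identity at a generic superevaluation point, and then use the symmetry of Theorem~\ref{SimMac} to read off the Pieri coefficients.

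Applying Proposition~\ref{Op+} to the bisymmetric function $f=\mathcal P_\Lambda$ and invoking Lemma~\ref{AutVal} to rewrite the left-hand side as an eigenvalue produces the polynomial identity
$$
u_\Lambda^+(e_r)\,\Delta_m^t(x)\,\mathcal P_\Lambda(x;q,t) \;=\; \sum_{J,[\sigma]} C_{J,\sigma}(x)\,\tau_J K_\sigma \mathcal P_\Lambda(x;q,t).
$$
I would then evaluate both sides at $u_{\bar\Lambda}^+$ for an arbitrary superpartition $\bar\Lambda$ with permutation $\bar w$ making $(\bar\Lambda,\bar w)$ generate a superevaluation. A direct computation of how $\tau_J$ and $K_\sigma$ transform the coordinates $x_i=q^{\bar\Lambda^\circledast_{\bar w(i)}}t^{1-\bar w(i)}$ shows that $u_{\bar\Lambda}^+(\tau_J K_\sigma \mathcal P_\Lambda)=u_{\bar\Omega}^+(\mathcal P_\Lambda)$, where $\bar\Omega$ is obtained by biordering $\sigma^{-1}\tau_J(\bar\Lambda+(1^m))-(1^m)$, provided $(\bar\Omega,\bar w\sigma)$ itself generates a superevaluation.

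Lemma~\ref{lemmaOmega} then carries the combinatorial weight. Part~(1) forces $u_{\bar\Lambda}^+(C_{J,\sigma})=0$ whenever such a $\bar\Omega$ is inadmissible, so only those $(J,[\sigma])$ whose associated $\bar\Omega$ satisfies the three conditions of the theorem (with $\bar\Lambda$ in the role of $\Lambda$) survive; parts~(2) and~(3) guarantee that each valid $\bar\Omega$ is indexed by a unique coset $[\sigma]\in\mathfrak S_N/(\mathfrak S_m\times\mathfrak S_{m+1,N})$ and a unique $J\subseteq[m+1,N]$. After reindexing by $\bar\Omega$, substituting the symmetry $u_{\bar\Omega}^+(\mathcal P_\Lambda)=\tfrac{u_{\Lambda_0}^+(\mathcal P_\Lambda)}{u_{\Lambda_0}^+(\mathcal P_{\bar\Omega})}u_\Lambda^+(\mathcal P_{\bar\Omega})$ from Theorem~\ref{SimMac} into each summand (and likewise replacing $u_{\bar\Lambda}^+(\mathcal P_\Lambda)$ on the left), cancelling the common factor $u_{\Lambda_0}^+(\mathcal P_\Lambda)$, and dividing through by $u_{\bar\Lambda}^+(\Delta_m^t)$, both sides assume the shape $\sum_{\bar\Omega}(\text{scalar})\,u_\Lambda^+(\mathcal P_{\bar\Omega})$.

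To conclude, I would observe that $u_\Lambda^+(e_r)\,u_\Lambda^+(\mathcal P_{\bar\Lambda})=u_\Lambda^+(e_r\mathcal P_{\bar\Lambda})$ also equals $\sum_{\bar\Omega}c_{\bar\Lambda,\bar\Omega}u_\Lambda^+(\mathcal P_{\bar\Omega})$ from the (yet-to-be-determined) Pieri expansion of $\mathcal P_{\bar\Lambda}$. Since the resulting identity holds for every $\Lambda$, and since the functions $\Lambda\mapsto u_\Lambda^+(\mathcal P_{\bar\Omega})$ are linearly independent over $\mathbb Q(q,t)$ as $\bar\Omega$ varies (a consequence of the triangularity~\eqref{orderE}, the explicit nonvanishing evaluation~\eqref{ev+mac}, and the fact that the $\mathcal P_{\bar\Omega}$ form a basis of $\mathscr R_{m,N}$), matching the coefficient of each $u_\Lambda^+(\mathcal P_{\bar\Omega})$ yields the stated formula for $c_{\bar\Lambda,\bar\Omega}$; renaming $(\bar\Lambda,\bar\Omega)\mapsto(\Lambda,\Omega)$ then gives the Pieri rule. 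The main obstacle is the identification of $u_{\bar\Lambda}^+(\tau_J K_\sigma\mathcal P_\Lambda)$ with $u_{\bar\Omega}^+(\mathcal P_\Lambda)$ together with the vanishing of off-support terms, since it is precisely there that the three conditions on $\Omega$ appearing in the theorem emerge and where Lemma~\ref{lemmaOmega} is indispensable; the remainder reduces to bookkeeping coupled with the symmetry of Theorem~\ref{SimMac}.
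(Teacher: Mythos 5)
Your proposal is correct and follows essentially the same route as the paper: apply Proposition~\ref{Op+} together with the eigenvalue identity of Lemma~\ref{AutVal}, evaluate at a generic superevaluation point, use Lemma~\ref{lemmaOmega} to kill the off-support terms and to get uniqueness of $(J,[\sigma])$ for each surviving $\Omega$, and then invoke the symmetry of Theorem~\ref{SimMac} plus the fact that the evaluations separate bisymmetric polynomials to upgrade the scalar identity to the polynomial Pieri rule. The only cosmetic difference is that you act on $\mathcal P_\Lambda$ and evaluate at $\bar\Lambda$, whereas the paper acts on $\tilde{\mathcal P}^+_\Psi$ and evaluates at $\Lambda$ (so the roles of the two superpartitions are swapped in name), and you phrase the final separation step as linear independence of $\Lambda\mapsto u_\Lambda^+(\mathcal P_{\bar\Omega})$ rather than ``holds for all $\Psi$''; these are equivalent.
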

\begin{proof} We know from Theorem~\ref{Op+} that 
\begin{equation*}
  e_{r}(Y_{m+1},\ldots,Y_{N}) \Delta_m^t(x)  \tilde {\mathcal P}^+_{\Psi}(x;q,t) =  \sum_{\substack{ J \subset [m+1,N] \\ |J| = r }} \, \, 
  \sum_{\substack{ [\sigma] \in  \mathfrak S_N/ ( \mathfrak S_m \times  \mathfrak S_{m+1,N}) \\ \sigma([m]) \cap L= \emptyset }} C_{J,\sigma}(x) \tau_{J} K_\sigma  \tilde {\mathcal P}^+_{\Psi}(x;q,t)
\end{equation*}
where we recall that $\tilde {\mathcal P}^+_{\Psi}(x;q,t)$ was defined in Theorem~\ref{SimMac}.
Using $e_r^{(m+1)}$ to denote $e_{r}(x_{m+1},\dots,x_N)$, we obtain from
Lemma~\ref{AutVal} that
\begin{equation*}
  u_\Psi^+ (e_{r}^{(m+1)} ) \Delta_m^t(x)  \tilde {\mathcal P}^+_{\Psi}(x;q,t) =  \sum_{\substack{ J \subset [m+1,N] \\ |J| = r }} \, \, 
  \sum_{\substack{ [\sigma] \in  \mathfrak S_N/ ( \mathfrak S_m \times  \mathfrak S_{m+1,N}) \\ \sigma([m]) \cap L= \emptyset }} C_{J,\sigma}(x)\tau_{J} K_\sigma  \tilde {\mathcal P}^+_{\Psi}(x;q,t) 
\end{equation*}
Let $\Lambda$ be a superpartition such that $(\Lambda,w)$ is a superevaluation.
Applying $u_\Lambda^+$ on both sides of the equation (and dropping the dependencies in $x$ in the evaluations for simplicity) leads to
\begin{equation} \label{equ+}
   u_\Psi^+(e_{r}^{(m+1)} )  u_\Lambda^+ (\Delta_m^t)  u_\Lambda^+ ( \tilde {\mathcal P}^+_{\Psi}) =  \sum_{\substack{ J \subset [m+1,N] \\ |J| = r }} \, \, 
  \sum_{\substack{ [\sigma] \in  \mathfrak S_N/ ( \mathfrak S_m \times  \mathfrak S_{m+1,N}) \\ \sigma([m]) \cap L= \emptyset }} u_\Lambda^+(C_{J,\sigma}) u_\Lambda^+(\tau_{J} K_\sigma  \tilde {\mathcal P}^+_{\Psi}) 
\end{equation}
Now, in  $u_\Lambda^+(\tau_{J} K_\sigma  \tilde {\mathcal P}_{\Psi})$, the evaluation amounts to the following substitution
$$
x_i = q^{(\Lambda +(1^m))_{\sigma(i)}+\varepsilon_{\sigma(i)}^J}t^{1-w\sigma(i)}
$$
where again $\varepsilon_{i}^J=1$ if $i\in J$ and 0 otherwise.  Comparing with \eqref{OL}, we have that the substitution is
$$
x_i = q^{(\Omega +(1^m))_i}t^{1-w\sigma(i)}
$$
where $\Omega=\sigma^{-1}\tau_J(\Lambda+(1^m))-(1^m)$.  Choosing $\sigma$ in $[\sigma]$ such that $\Omega$ is biordered, we deduce from 
Lemma~\ref{lemmaOmega} that for $u_\Lambda^+(C_{J,\sigma})$ not to vanish, 
we need $(\Omega,w\sigma)$ to generate a superevaluation (and in particular for $\Omega$ to be a superpartition).  We also get from Lemma~\ref{lemmaOmega} {\it 2)} and {\it 3)} that the  superpartition  $\Omega$ can arise in at most one way in the sums in the r.h.s. of \eqref{equ+}.  As such, we obtain that
\begin{equation*}
  u_\Psi^+(e_{r}^{(m+1)}) u_\Lambda^+(\Delta_m^t)  u_\Lambda^+ (  \tilde {\mathcal P}^+_{\Psi}) = \sum_\Omega u_\Lambda^+(C_{J,\sigma})  u_\Omega^+ ( \tilde {\mathcal P}^+_{\Psi} )
\end{equation*}
where the sum is over all superpartitions $\Omega$ such that there exists a
$\sigma \in \mathfrak S_N$ and
a $J \subseteq [m+1,N]$ of size $r$ such that $\sigma(\Omega+(1^m))=\tau_J(\Lambda+(1^m))$, such that $\sigma([m]) \cap L = \emptyset$,
and  such that $(\Omega,w\sigma)$ is a superevaluation.

The symmetry established in Theorem~\ref{SimMac} then implies that 
\begin{equation*}
  u_\Psi^+\bigl(e_{r}^{(m+1)} u_\Lambda^+(\Delta_m^t)   \tilde {\mathcal P}^+_{\Lambda}\bigr) = u_\Psi^+\left(\sum_\Omega u_\Lambda^+(C_{J,\sigma})   \tilde {\mathcal P}^+_{\Omega} \right) 
\end{equation*}
Now, the previous equation holds for every superpartition $\Psi$.  Therefore,
\begin{equation*}
  e_{r}(x_{m+1},\dots,x_N) u_\Lambda^+(\Delta_m^t)   \tilde {\mathcal P}^+_{\Lambda}(x;q,t) = \sum_\Omega u_\Lambda^+(C_{J,\sigma})   \tilde {\mathcal P}^+_{\Lambda}(x,q,t)  
\end{equation*}
from which we finally obtain that
\begin{equation*}
  e_{r}(x_{m+1},\dots,x_N)    {\mathcal P}_{\Lambda}(x;q,t) = \sum_\Omega \left(\frac{u_\Lambda^+(C_{J,\sigma})}{ u_\Lambda^+(\Delta_m^t)} \frac{u_{\Lambda_0^+}(\mathcal P_\Lambda)}{u_{\Lambda_0^+}(\mathcal P_\Omega)} \right)   {\mathcal P}_{\Omega}(x,q,t)  
\end{equation*}
\end{proof}

\section{Pieri rules and vertical strips}
In this section, we will give explicitly which superpartition $\Omega$ appear in the Pieri rules of Theorem~\ref{theoPieri}.  They will turn out to be certain vertical strips.

For partitions $\lambda$ and $\mu$, we say that $\mu/\lambda$ is a vertical $r$-strip if $|\mu|-|\lambda|=r$ and
$\mu_i-\lambda_i \in \{0,1 \}$ for all $i$,  where we consider that $\mu_i=0$ (resp. $\lambda_i=0)$ if $i$ is larger than the length of $\mu$ (resp. $\lambda$).

Given the superpartitions $\Lambda$ and $\Omega$, we say that $\Omega/\Lambda$ is a vertical $r$-strip if both $\Omega^{\circledast}/\Lambda^{\circledast}$ and $\Omega^{*}/\Lambda^{*}$ are vertical $r$-strips.  When  describing the vertical strip $\Omega / \Lambda$ with Ferrers' diagram, we will use the following notation:
\begin{itemize}
    \item the squares of $\Lambda$ will be denoted by $\WSS$
    \item the squares of $\Omega / \Lambda$ that do not lie over a circle of $\Lambda$ will be denoted by $\BSS$
    \item the squares of $\Omega / \Lambda$ that lie over a circle of $\Lambda$ will be denoted by $\OSS$
    \item the circles of $\Lambda$ that are still circles in $\Omega$ will be denoted by $\WCC$
    \item the circles of $\Omega$ that were not circles in $\Lambda$ will be denoted by $\BCC$
\end{itemize}
For instance, if $\Lambda = (5,3,1;4,3)$ and $\Omega = (5,4,0;5,4,2)$ the cells of the vertical 4-strip $\Omega / \Lambda$ are represented as:
\begin{equation*} 
{\small \tableau[scY]{  &&&& & \bl \cercle{}\\ &&& & \fl \\  &&& \tf \ocircle & \bl \cerclep \\ &&& \fl  \\ & \tf \ocircle{} \\  \bl \cerclep}}
\end{equation*}  

A row in the diagram of $\Omega / \Lambda$ that contains a   $\BSS$
will be called a $\BSS$-row (and similarly for $\WCC$, $\BCC$ and $\OSS$).
A row  that both contains a $\BCC$ and a $\OSS$ will be called a
${\footnotesize\tableau[scY]{ \tf \ocircle & \bl \cerclep} }$-row.
For instance, in our previous example, the set of $\BSS$-rows is $\{2,4\}$,
the set of  $\BCC$-rows is $\{3,6\}$, the set of  $\OSS$-rows is  $\{3,5\}$
while the set of ${\footnotesize\tableau[scY]{ \tf \ocircle & \bl \cerclep} }$-rows is  $\{3\}$.

  \begin{definition}  We will say that 
    $\Omega/ \Lambda$ is a vertical $r$-strip of type I if
\begin{enumerate}
   \item  $\Omega/ \Lambda$ is a vertical $r$-strip
   \item there are no
     ${\footnotesize\tableau[scY]{ \tf \ocircle & \bl \cerclep} }$-rows in the diagram of   $\Omega/ \Lambda$
\end{enumerate}
  \end{definition}
For instance, if $\Lambda=(3,1;5,4,3)$ and $\Omega=(4,0;6,4,3,2)$ then 
   $\Omega/ \Lambda$ is a vertical 3-strip of type I.
\begin{equation}
{\small \tableau[scY]{  &&&&&\fl \\  &&& & \bl \cerclep \\   &&&  \tf \ocircle \\  &&  \\  & \tf \ocircle  \\  \bl \cerclep   }}
\end{equation}
We first show that the $\Omega$'s that can appear in the Pieri rules of 
 Theorem~\ref{theoPieri} are such that  $\Omega / \Lambda$ is a vertical $r$-strip of type I.
\begin{lemma} \label{cond2vert} Let $\sigma$ and $J$ be such
  as in Theorem~\ref{theoPieri}, that is, such that
\begin{enumerate}
    \item $\sigma(\Omega+(1^m))=\tau_J(\Lambda+(1^m))$
    \item $\sigma([m]) \cap L = \emptyset $ con $L=[m+1,N]-J$
    \item $(\Omega,w\sigma)$ is a superevaluation if $(\Lambda,w)$ is a superevaluation.
    \item $J \subseteq [m+1,N]$ with  $|J|=r$.
\end{enumerate}
Then $\Omega / \Lambda$ is a vertical $r$-strip of type I.  
\end{lemma}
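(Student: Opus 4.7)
Proof plan. The strategy is to translate the two conditions that define a vertical $r$-strip of type I (namely, that both $\Omega^\circledast/\Lambda^\circledast$ and $\Omega^*/\Lambda^*$ are vertical $r$-strips, and that no row contains both an $\OSS$ and a $\BCC$) into explicit relations among the three subsets $w([m])$, $w(J)$, and $w\sigma([m])$ of $[N]$, and then verify these relations using the two combinatorial hypotheses of the lemma.

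I would first extract three basic identities from the superevaluation assumptions. From $(\Lambda,w)$ and $(\Omega, w\sigma)$ being superevaluations one has $\Lambda^\circledast - \Lambda^* = \varepsilon^{w([m])}$ and $\Omega^\circledast - \Omega^* = \varepsilon^{w\sigma([m])}$, since subtracting $w\Lambda = \Lambda^*$ from $w(\Lambda+(1^m)) = \Lambda^\circledast$ yields $w(1^m)$ (similarly on the $\Omega$ side). Applying $w$ to $\sigma(\Omega+(1^m)) = \tau_J(\Lambda+(1^m))$ and using the identity $w\tau_J = \tau_{w(J)}\, w$ gives $\Omega^\circledast = \tau_{w(J)}\Lambda^\circledast$, hence $\Omega^\circledast - \Lambda^\circledast = \varepsilon^{w(J)}$. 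Combining these three equalities yields the master formula
\begin{equation*}
(\Omega^* - \Lambda^*)_j \,=\, [j \in w(J)] + [j \in w([m])] - [j \in w\sigma([m])].
\end{equation*}

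The $\circledast$-strip part of the vertical-strip condition is immediate, since $\varepsilon^{w(J)}$ takes values in $\{0,1\}$ and sums to $|J|=r$. For the $*$-strip part I would use both hypotheses: $J \subseteq [m+1,N]$ forces $w([m]) \cap w(J) = \emptyset$, while $\sigma([m]) \cap L = \emptyset$ forces $\sigma([m]) \subseteq [m] \cup J$ and therefore $w\sigma([m]) \subseteq w([m]) \cup w(J)$. A short three-way case check on these indicators then shows $(\Omega^* - \Lambda^*)_j \in \{0,1\}$ at every position: the two configurations that would otherwise produce a value of $+2$ or $-1$ (namely $j \in w([m]) \cap w(J)$ and $j \in w\sigma([m]) \setminus (w([m]) \cup w(J))$) are precisely what the hypotheses exclude, and the total is $|w(J)| + |w([m])| - |w\sigma([m])| = r$.

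For the absence of $\tableau[scY]{\tf \ocircle & \bl \cerclep}$-rows, I would identify the $\OSS$- and $\BCC$-rows by their index sets. An $\OSS$ at row $j$ forces $\Lambda$ to have a circle there, i.e. $j \in w([m])$, and a square to be added to that row; the master formula (together with $w([m]) \cap w(J) = \emptyset$) reduces the latter condition to $j \notin w\sigma([m])$, so $\OSS$-rows are indexed by $w([m]) \setminus w\sigma([m])$. Dually, a $\BCC$-row requires $j \in w\sigma([m])$ and, when $j$ also lies in $w([m])$, the circles of $\Lambda$ and $\Omega$ in row $j$ must occupy different cells; but the master formula gives $(\Omega^* - \Lambda^*)_j = 0$ whenever $j \in w\sigma([m]) \cap w([m])$, forcing the two circles to coincide and ruling out a $\BCC$ in this case. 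Hence $\BCC$-rows are indexed by $w\sigma([m]) \setminus w([m])$. These two index sets are manifestly disjoint, and the conclusion follows.

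I expect the main obstacle to be the final step: carefully identifying what $\OSS$ and $\BCC$ mean at the level of the indicator sets and, in particular, verifying that an index in $w([m]) \cap w\sigma([m])$ produces coinciding circles of $\Lambda$ and $\Omega$ rather than distinct ones. Everything else reduces to indicator-vector arithmetic once the master formula is in place.
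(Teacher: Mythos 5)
Your proposal is correct and follows essentially the same route as the paper: both derive $\Omega^\circledast=\tau_{w(J)}\Lambda^\circledast$ and the identity $\Omega^*-\Lambda^*=\varepsilon^{w(J)}+\varepsilon^{w([m])}-\varepsilon^{w\sigma([m])}$, then use $w(J)\cap w([m])=\emptyset$ and $w\sigma([m])\subseteq w([m])\cup w(J)$ to bound the entries in $\{0,1\}$. The only (harmless) divergence is the last step, where you identify the $\OSS$- and $\BCC$-rows as the disjoint sets $w([m])\setminus w\sigma([m])$ and $w\sigma([m])\setminus w([m])$, whereas the paper notes more directly that a common row would lie in $w(J)\cap w([m])$.
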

\begin{proof}
Applying $w$ on both sides of {\it (1)} gives
$w\sigma(\Omega+(1^m))=\tau_{w(J)}w\bigl(\Lambda+(1^m)\bigr)$.
From {\it (3)} we then get that
$\Omega^{\circledast} = \tau_{w(J)} \Lambda^{\circledast}$,
which immediately implies that $\Omega^{\circledast} / \Lambda^{\circledast}$ is a vertical $r$-strip.

Subtracting $(1^m)$ on both sides of  {\it (1)} gives
$\sigma(\Omega+(1^m))-(1^m)=\tau_{J}(\Lambda+(1^m))-(1^m)= \tau_J \Lambda$.
Applying again $w$ on both sides of the equation then yields
$w\sigma(\Omega)+w\sigma(1^m)-w(1^m)=\tau_{w(J)}(w\Lambda)$,
which from {\it (3)} amounts to
$\Omega^{*} +w\sigma(1^m)-w(1^m)=\tau_{w(J)}\Lambda^{*}$, or equivalently, to
$$\Omega^{*}=\tau_{w(J)}\Lambda^{*} +w(1^m)-w\sigma(1^m)$$
Note that by the action of the symmetric group on vectors, $w(1^m)$
adds a 1 in the positions $w([m])$ (and similarly for $w\sigma(1^m)$).
From {\it (4)}, we have that $w(J)\cap w([m]) = \emptyset$, which gives 
$\Omega^*_i -\Lambda_i^* \leq 1$.  Moreover, from {\it (2)}, we have that
$\sigma([m]) \subseteq [m] \cup J$, which implies that
$w\sigma([m]) \subseteq w([m]) \cup wJ$.  Hence $0 \leq \Omega^*_i -\Lambda_i^*  \leq 1$ and we have that $\Omega^*/\Lambda^*$ is a vertical $r$-strip as well.

Finally, suppose that row $i$ in $\Omega/\Lambda$ is a 
${\footnotesize\tableau[scY]{ \tf \ocircle & \bl \cerclep} }$-row.
We have in this case that $i \in \Omega^{\circledast} / \Lambda^{\circledast}$
as well as $i \in w([m])$ since $\Lambda$ has a circle in row $i$.  
But, as we have seen,  $\Omega^{\circledast} = \tau_{w(J)} \Lambda^{\circledast}$.
We thus have that  $i \in w(J) \cap w([m])$, which contradicts ${\it (4)}$. 
\end{proof}

\begin{remark} \label{sigmatilde}
Observe that in a vertical $r$-strip, the rows of 
$\Omega^* / \Lambda^*$ correspond to the $\OSS$-rows together with the $\BSS$-rows. Similarly, the rows of $\Omega^{\circledast} / \Lambda^{\circledast}$ correspond in a vertical strip to the  $\BCC$-rows together with the $\BSS$-rows.  By this observation, if
$\Omega/\Lambda$ is a vertical $r$-strip, then the number of 
$\OSS$-rows is equal to the number of  $\BCC$-rows.
\end{remark}
We now show that all $\Omega$'s such that  $\Omega / \Lambda$ is a vertical $r$-strip of type I do in fact appear in the Pieri rules of 
 Theorem~\ref{theoPieri}.
\begin{lemma} \label{vert2cond} Given $\Omega / \Lambda$ a vertical $r$-strip of type I, let $\tilde \sigma$ be any permutation that interchanges the
  $\OSS$-rows and the  $\BCC$-rows while leaving the remaining rows invariant (such a permutation can be defined by Remark~\ref{sigmatilde}).  Let also $\tilde J$ be the set of
  $\OSS$-rows and $\BSS$-rows.  If 
  $$\sigma = w^{-1} \tilde{\sigma} w  \quad {\rm   and   } \quad J = w^{-1} \tilde{\sigma} (\tilde{J})$$
  then there exists a permutation $s\in \mathfrak S_m \times \mathfrak S_{m+1,N}$
such that $\sigma'=\sigma s$ obeys the following relations:
\begin{enumerate}
    \item $\sigma'(\Omega+(1^m))=\tau_J(\Lambda+(1^m))$
    \item $\sigma'([m]) \cap L = \emptyset $ con $L=[m+1,N]-J$
    \item $(\Omega,w\sigma')$ is a superevaluation if $(\Lambda,w)$ is a superevaluation.
    \item $J \subseteq [m+1,N]$.
\end{enumerate}
As such, the superpartition $\Omega$ satisfies the conditions of
 Theorem~\ref{theoPieri} (with 
 $C_{J,\sigma'}(x) = C_{J,\sigma}(x)$).
\end{lemma}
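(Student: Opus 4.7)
The plan is to verify relations (1)--(4) by translating everything to the sorted diagrams via the permutation $w$. The key combinatorial ingredient from Remark~\ref{sigmatilde} is that the rows of $\Omega^{*}/\Lambda^{*}$ are the $\BSS$-rows together with the $\OSS$-rows (this is precisely $\tilde{J}$), while the rows of $\Omega^{\circledast}/\Lambda^{\circledast}$ are the $\BSS$-rows together with the $\BCC$-rows. The type~I hypothesis guarantees that the $\OSS$-rows and $\BCC$-rows are disjoint, so $\tilde\sigma$ is well-defined as a genuine bijection between them. As an immediate consequence one has $\tilde\sigma(\tilde{J}) = \BSS \cup \BCC$, the rows of $\Omega^{\circledast}/\Lambda^{\circledast}$, while $\tilde\sigma(w([m])) = (w([m]) \setminus \OSS) \cup \BCC$ is exactly the set of circle-rows of $\Omega^{\circledast}$.

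I would then deduce conditions (2) and (4) by pulling these identifications back by $w^{-1}$. Since $w([m])$ is the circle-row set of $\Lambda^{\circledast}$ and neither the $\BSS$- nor the $\BCC$-rows belong to it, one gets $w(J) \cap w([m]) = \tilde\sigma(\tilde{J}) \cap w([m]) = \emptyset$, so $J \subseteq [m+1,N]$, which is (4). Next, the sets $\tilde\sigma(w([m]))$ and $w(L) = w([m+1,N]) \setminus w(J)$ are disjoint (the first consists of circle-rows of $\Omega^{\circledast}$, the second of rows outside $w([m]) \cup \BSS \cup \BCC$). Applying $w^{-1}$ yields $\sigma([m]) \cap L = \emptyset$, and this is preserved when $\sigma$ is replaced by $\sigma' = \sigma s$ since $s([m]) = [m]$ for any $s \in \mathfrak{S}_m \times \mathfrak{S}_{m+1,N}$. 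This proves (2).

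For condition (3), I would produce $s$ explicitly. Using $w\sigma = \tilde\sigma w$ and $s(1^m) = (1^m)$, the superevaluation condition on $w\sigma'$ reduces to the single equation $ws\Omega = \tilde\sigma^{-1}\Omega^{*}$: the analogous equation for $\Omega+(1^m)$ follows automatically since $\Omega^{\circledast} - \Omega^{*} = \varepsilon^{\tilde w([m])}$ and $\tilde\sigma(\varepsilon^{w([m])}) = \varepsilon^{\tilde\sigma(w([m]))} = \varepsilon^{\tilde w([m])}$, where $\tilde w$ is any superevaluation permutation for $\Omega$. To solve $ws\Omega = \tilde\sigma^{-1}\Omega^{*}$ with $s \in \mathfrak{S}_m \times \mathfrak{S}_{m+1,N}$, it is enough to verify that the multisets $\{\Omega^{*}_k : k \in \tilde\sigma(w([m]))\}$ and $\{\Omega^{*}_k : k \in \tilde\sigma(w([m+1,N]))\}$ coincide respectively with $\Omega^a$ and $\Omega^s$. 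But $\tilde\sigma(w([m])) = \tilde w([m])$, and the defining identity $\tilde w \Omega = \Omega^{*}$ shows that the $\Omega^{*}$-values on $\tilde w([m])$ are exactly $\Omega^a$; the complementary multiset is then $\Omega^s$ by the same token, so a suitable $s$ exists.

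With (3) in hand, condition (1) is essentially automatic: applying $w$ to both sides of the desired identity and using $w\tau_J = \tau_{w(J)}w$, $w(\Lambda+(1^m)) = \Lambda^{\circledast}$, $w\sigma'(\Omega+(1^m)) = \Omega^{\circledast}$, and $\tau_{w(J)}\Lambda^{\circledast} = \Omega^{\circledast}$ (from the first paragraph), one obtains the desired equality after cancelling $w$. The final assertion $C_{J,\sigma'}(x) = C_{J,\sigma}(x)$ is the right-invariance statement already recorded in Proposition~\ref{Op+}. The principal source of difficulty in this plan is the bookkeeping required to keep the three permutations $w$, $\tilde\sigma$, and the implicit sorting $\tilde w$ of $\Omega$ coordinated at once; the type~I hypothesis is exactly what enables $\tilde\sigma$ to transfer the circle-row set of $\Lambda^{\circledast}$ to that of $\Omega^{\circledast}$, and without it the swap $\tilde\sigma$ would not be well-defined and the construction would break down in the first paragraph.
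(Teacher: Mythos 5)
Your proposal is correct and follows essentially the same route as the paper's proof: identify $w([m])$, $w(J)=\tilde\sigma(\tilde J)$, and $\tilde\sigma(w([m]))$ with the appropriate row sets of the diagram of $\Omega/\Lambda$, use the type~I disjointness to get (2) and (4), and construct $s$ from the observation that $w^{-1}\tilde\sigma^{-1}$ carries $(\Omega^*,\Omega^{\circledast})$ to a pair $(\pmb v,\pmb v+(1^m))$ block-rearrangeable to $(\Omega,\Omega+(1^m))$. The only difference is organizational (you prove (2) and (4) before (3)), and your multiset verification for the existence of $s$ is equivalent to the paper's argument.
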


\begin{proof} 
  We first show that $(\Omega,w\sigma')$ is a superevaluation.  By definition,  we have to show that  $w\sigma'\Omega=\Omega^*$ and that  $w\sigma'\bigl(\Omega + (1^m)\bigr)=\Omega^\circledast$ for a certain $s\in \mathfrak S_m \times \mathfrak S_{m+1,N}$.  We will show, equivalently, that
  $(w \sigma')^{-1} \Omega^{*} = \Omega$ and that
$(w \sigma')^{-1} \Omega^{\circledast} = \Omega + (1^m)$.  Observe that
  $$w \sigma' = w \sigma s= w w^{-1} \tilde{\sigma} w s= \tilde{\sigma} w s$$
It thus suffices to show that
$s^{-1} w^{-1} \tilde{\sigma}^{-1} \Omega^*=\Omega$
and $s^{-1} w^{-1} \tilde{\sigma}^{-1} \Omega^\circledast=\Omega + (1^m)$.
  From the definition of 
$\tilde{\sigma}$, it is immediate that $\tilde{\sigma}^{-1}$ also interchanges
  the $\BCC$-rows and the $\OSS$-rows. Hence $\tilde{\sigma}^{-1} \Omega^\circledast/\tilde{\sigma}^{-1} \Omega^*=\Lambda^\circledast/\Lambda^*$. Since by definition
$w^{-1}$ sends $\Lambda^\circledast/\Lambda^*$ to $[m]$, we have that 
  $w^{-1} \tilde{\sigma}^{-1}$ sends the rows in the diagram of $\Omega$ ending with a circle to $[m]$, that is, $w^{-1} \tilde{\sigma}^{-1} \Omega^*=\pmb v$
  and  $w^{-1} \tilde{\sigma}^{-1} \Omega^\circledast=\pmb v + (1^m)$ for a certain $v \in \mathbb Z_{\geq 0}^N$. Using any
   $s^{-1} \in \mathfrak S_m \times \mathfrak S_{m+1,N}$ such that $s^{-1} \pmb v=\Omega$, we obtain that $s^{-1} w^{-1} \tilde{\sigma}^{-1} \Omega^*=\Omega$
and $s^{-1} w^{-1} \tilde{\sigma}^{-1} \Omega^\circledast=\Omega + (1^m)$ as wanted.
  We will take this as the definition of $s$ in the rest of the proof.

Observe that {\it 1)} is equivalent to 
$$w \sigma'(\Omega+(1^m))=\tau_{w(J)}w(\Lambda+(1^m))=\tau_{\tilde \sigma (\tilde J)}\Lambda^\circledast$$
by definition of $w$ and $\tilde J$.  Since we have shown that  {\it 3)} holds, we only have left to show that
$\Omega^{\circledast} =\tau_{\tilde{\sigma}(\tilde{J})}(\Lambda^{\circledast})$. But by definition of $\tilde J$ and $\tilde \sigma$, the set $\tilde \sigma(\tilde J)$ corresponds to the  $\BCC$-rows and $\BSS$-rows in the diagram of $\Omega/\Lambda$, that is, to the rows of $\Omega^{\circledast}/\Lambda^{\circledast}$.  We have thus shown that $\Omega^{\circledast} =\tau_{\tilde{\sigma}(\tilde{J})}(\Lambda^{\circledast})$.

As for {\it 2)}, let  $x \in \sigma'([m]) \cap L =\sigma([m]) \cap L$.
Therefore, $w(x) \in w \sigma([m]) \cap w (L)= \tilde \sigma w([m]) \cap w(L)$.
Now, $w([m])$ corresponds to the $\OSS$-rows and the $\WCC$-rows in the diagram of $\Omega/\Lambda$, which implies that  $\tilde \sigma w([m])$ corresponds to the  $\WCC$-rows and  $\BCC$-rows in that diagram. Since $w([m]) \cap w(L)=\emptyset$, $w(L)$ cannot correspond to any $\OSS$-row or any $\WCC$-row.  Therefore,
$w(x) \in  \tilde \sigma w([m]) \cap w(L)$ needs to correspond to a  $\BCC$-row.
But this is impossible because  $w(J) \cap w(L)=\emptyset$ and
the $\BCC$-rows belong to $w(J)=\tilde \sigma (\tilde J)$.

Finally, we have to show {\it 4)}.  By definition of a vertical strip of type I, the $\BCC$, $\BSS$,  $\WCC$ and $\OSS$ rows are all distinct.  Now, $\tilde{\sigma} \tilde{J}$ corresponds to the $\BCC$-rows and the $\BSS$-rows, while
 $w([m])$ corresponds to the $\WCC$-rows and the $\OSS$-rows. Hence, 
$\tilde{\sigma} \tilde{J} \subseteq w([m+1,N])$, which implies that
$J = w^{-1} \tilde{\sigma} \tilde{J} \subseteq [m+1,N]$.     
\end{proof}

\begin{example}
Consider the following vertical strip of type I:
\begin{equation}
{\small \tableau[scY]{  &&&& \fl \\ &&& \bl \cerclep \\  && \tf \ocircle \\  & \\  &\tf \ocircle  \\ \bl \cerclep}}
\end{equation}
We have in this case that $\tilde{J} = \{1,3,5\}$.  Taking $\tilde{\sigma} =
[1, 3, 2, 4, 6, 5]$, and 
$w = [3, 5, 1, 2, 4, 6]$ (in one-line notation), we get that $J= \{3,4,6\}$ and $\sigma = [4, 6, 3, 1, 5, 2]$. 
\end{example}
Using Lemma~\ref{cond2vert} and Lemma~\ref{vert2cond}, we can rewrite Theorem~\ref{theoPieri} in a more precise fashion.
\begin{corollary} \label{coroPieri} For $r\in \{1,\dots,N-m\}$, the bisymmetric Macdonald polynomial
$\mathcal P_{\Lambda}(x;q,t)$ obeys the following Pieri rules
\begin{equation*}
  e_{r}(x_{m+1},\dots,x_N)    {\mathcal P}_{\Lambda}(x;q,t) = \sum_\Omega \left(\frac{u_\Lambda^+(C_{J,\sigma})}{ u_\Lambda^+(\Delta_m^t)} \frac{u_{\Lambda_0^+}(\mathcal P_\Lambda)}{u_{\Lambda_0^+}(\mathcal P_\Omega)} \right)   {\mathcal P}_{\Omega}(x,q,t)  
\end{equation*}
where the sum is over all superpartitions $\Omega$ such that $\Omega/\Lambda$ is a vertical $r$-strip of type I.  Note that $C_{J,\sigma}(x)$ was defined in 
Proposition~\ref{Op+}, where 
$\sigma$ and $J$ can be obtained in the following manner from  the diagram of $\Omega/\Lambda$: let $\tilde \sigma$ be any permutation that interchanges the
  $\OSS$-rows and the  $\BCC$-rows while leaving the remaining rows invariant, and let  $\tilde J$ be the set of
  $\OSS$-rows and $\BSS$-rows.  Then
$$\sigma = w^{-1} \tilde{\sigma} w  \quad {\rm   and   } \quad J = w^{-1} \tilde{\sigma} (\tilde{J})$$
where $w$ is such that $(\Lambda,w)$ is a superevaluation.
\end{corollary}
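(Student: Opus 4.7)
The plan is to derive the corollary directly from Theorem~\ref{theoPieri} by rewriting its index set in purely combinatorial language. Theorem~\ref{theoPieri} already gives the expansion of $e_r(x_{m+1},\dots,x_N)\,\mathcal P_\Lambda$ with a sum over all superpartitions $\Omega$ for which there exist $\sigma\in\mathfrak S_N$ and $J\subseteq[m+1,N]$ of size $r$ satisfying the three evaluation conditions. What remains is to match this index set bijectively with the set of vertical $r$-strips of type I and to verify that the coefficient $C_{J,\sigma}$ depends only on $\Omega/\Lambda$, not on the particular $\sigma$ and $J$ chosen.

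First I would invoke Lemma~\ref{cond2vert}: it shows that whenever $(\Omega,\sigma,J)$ satisfies the three conditions of Theorem~\ref{theoPieri}, the skew shape $\Omega/\Lambda$ is automatically a vertical $r$-strip of type~I. Thus the support of the Pieri expansion is contained in the set of such vertical strips. Next I would invoke Lemma~\ref{vert2cond}: for every vertical $r$-strip of type I with skew shape $\Omega/\Lambda$, one can construct a permutation $\tilde\sigma$ interchanging the $\OSS$-rows with the $\BCC$-rows, set $\tilde J$ equal to the union of the $\OSS$- and $\BSS$-rows, and then define $\sigma=w^{-1}\tilde\sigma w$ and $J=w^{-1}\tilde\sigma(\tilde J)$; the lemma guarantees that, up to right-multiplication by an element of $\mathfrak S_m\times\mathfrak S_{m+1,N}$, this pair $(\sigma,J)$ satisfies the three conditions of Theorem~\ref{theoPieri}. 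This establishes the desired bijection between indexing data.

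Next I would address well-definedness of the coefficient. By Proposition~\ref{Op+}, the coefficient $C_{J,\sigma}$ is invariant under $\sigma\mapsto\sigma w'$ for $w'\in\mathfrak S_m\times\mathfrak S_{m+1,N}$, so the freedom in the choice of $s$ in Lemma~\ref{vert2cond} is harmless. Moreover, parts~$(2)$ and~$(3)$ of Lemma~\ref{lemmaOmega} show that each $\Omega$ arises from a unique class $[\sigma]\in\mathfrak S_N/(\mathfrak S_m\times\mathfrak S_{m+1,N})$ and a unique $J$ inside the sum of Theorem~\ref{theoPieri}, so no $\Omega$ is double-counted. Finally, different valid choices of $\tilde\sigma$ (permuting equal-length $\OSS$- and $\BCC$-rows among themselves) differ by elements of $\mathfrak S_m\times\mathfrak S_{m+1,N}$ on the left as well, and one checks that this leaves the evaluated coefficient $u_\Lambda^+(C_{J,\sigma})$ unchanged.

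I do not expect any serious obstacle: the bulk of the technical work is already contained in Lemma~\ref{cond2vert} and Lemma~\ref{vert2cond}, and the present corollary is in essence a repackaging of Theorem~\ref{theoPieri} under the combinatorial bijection those two lemmas provide. The most delicate point to verify carefully is simply that the recipe prescribing $(\sigma,J)$ from the diagram of $\Omega/\Lambda$ produces a representative of the unique class appearing in the sum of Theorem~\ref{theoPieri} for that $\Omega$, which is exactly what Lemma~\ref{vert2cond} asserts.
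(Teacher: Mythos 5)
Your proposal is correct and follows exactly the route the paper takes: the corollary is obtained by combining Theorem~\ref{theoPieri} with Lemma~\ref{cond2vert} (support contained in type-I vertical strips) and Lemma~\ref{vert2cond} (every type-I vertical strip occurs, with the stated recipe for $\sigma$ and $J$), together with the $[\sigma]$-invariance of $C_{J,\sigma}$ from Proposition~\ref{Op+} and the uniqueness statements in Lemma~\ref{lemmaOmega}. The paper's own argument is precisely this repackaging, so there is nothing further to add.
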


\begin{example}
  The superpartitions  that appear in the expansion of the multiplication of
$e_2(x_3,x_4,\dots,x_N)$ and $\mathcal P_{(2,0;1)}(x_1,\dots,x_N;q,t) $ in terms of bisymmetric Macdonald polynomials are:
\begin{equation*}
  \small \tableau[scY]{ & & \tf \ocircle \\   & \bl \cerclep \\  \tf \ocircle \\ \bl \cerclep  } \qquad  \qquad \small \tableau[scY]{ & &  \bl \cercle{} \\ & \fl \\ \tf \ocircle \\ \bl \cerclep  } \qquad \qquad \small \tableau[scY]{ & & \bl \cercle{} \\ & \bl \cerclep \\ \tf \ocircle \\ \fl   } \qquad  \qquad \small \tableau[scY]{ & & \bl \cercle{} \\ \\ \tf \ocircle \\ \fl \\ \bl \cerclep  } 
\end{equation*}
\end{example}
To be more precise, we have that
\begin{align*}
&e_2(x_3,x_4,\dots,x_N) \mathcal P_{(2,0;1)}
=    \dfrac{q(1-t)}{1-qt}  \mathcal P_{(1,0;3,1)} + \dfrac{(1-q)(1-qt^2)}{(1-qt)^2} \mathcal P_{(2,0;2,1)} \\
&\qquad \qquad \qquad \qquad \qquad- \dfrac{(t+1)(1-t)(1-q)(1-q^2t^4)}{(1-q^2t^3)(1-qt)(1-qt^2)}  \mathcal P_{(2,1;1,1)} + \dfrac{(1-qt)(1-t^3)}{(1-t)(1-qt^3)}  \mathcal P_{(2,0;1,1,1)} 
\end{align*}
We now give more details on how the coefficient of $\mathcal P_{(1,0;3,1)}$ for instance was obtained. 
The diagram $\Omega/ \Lambda$ is in this case
\begin{equation*}
  \small \tableau[scY]{ & & \tf \ocircle \\   & \bl \cerclep \\  \tf \ocircle \\ \bl \cerclep  }  
\end{equation*}
Choosing instance $\tilde{\sigma}=(1 2) (3 4)$,  $\widetilde{J} = \{1,3\}$ and $w= (2 3)$, we then obtain from Corollary~\ref{coroPieri} that
$$\sigma = (23) (12)(34)(23) = [3,4,1,2]\quad \text{and} \quad J = (23)(12)(34)\{1,3\} = \{3,4\}$$
Lemma~\ref{RelDet} gives 
$$\Phi(\sigma) = -(t-1)^{2}x_3x_4\dfrac{(x_1-x_2)(x_3-x_4)}{(x_1-x_3)(x_1-x_4)(x_2-x_3)(x_2-x_4)}$$
while Theorem~\ref{theoPieri} with $m=2$ and $r=2$  yields
$$C_{J,\sigma} = t^{3-2N}\dfrac{tx_1-x_2}{x_1-x_2}\tau_3\tau_4\left(\dfrac{tx_3-x_4}{x_3-x_4}\dfrac{tx_4-x_3}{x_4-x_3}\Phi(\sigma) K_{13}K_{24} (x_1-x_2)  \right)$$
Taking 
$$\frac{u_\Lambda^+(C_{J,\sigma})}{ u_\Lambda^+(\Delta_m^t)} \frac{u_{\Lambda_0^+}(\mathcal P_\Lambda)}{u_{\Lambda_0^+}(\mathcal P_\Omega)}$$
we finally get the desired coefficient.

\section{Pieri rules for $e_r(x_1,\dots,x_m)$}

In Theorem~\ref{theoPieri} and Corollary~\ref{coroPieri}, we obtained Pieri rules for the action of $e_r(x_{m+1},\dots,x_N)$ on  bisymmetric Macdonald polynomials.  In this section, we will present Pieri rules for the the action of $e_r(x_{1},\dots,x_m)$.  Note that by the fundamental theorem of symmetric functions, any bisymmetric polynomial can be written as a linear combination of products of the $e_r(x_{1},\dots,x_m)$'s for $r=1,\dots,m$ and the $e_k(x_{m+1},\dots,x_N)$'s for $k=1,\dots,N-m$.  As such,  Pieri rules for $e_r(x_{1},\dots,x_m)$
and  $e_r(x_{m+1},\dots,x_N)$ provide a full set of Pieri rules.

Since the proof in the $e_r(x_{1},\dots,x_m)$ case is quite similar to that in the $e_r(x_{m+1},\dots,x_N)$ case, we will only state the main results.  Details can be found in \cite{Con}.

The analog of Proposition~\ref{Op+} that we need is the following.
\begin{proposition} \label{Op-} Let $f(x)$ be any bisymmetric function. For any $1 \leq r \leq m$, we have that
  $$e_{r}(Y_1,\ldots,Y_m) \Delta_m^t(x) f(x) =  \sum_{\substack{ J \subseteq [N] \\ |J| = r }} \, \,  \sum_{\substack{ [\sigma] \in  \mathfrak S_N/ ( \mathfrak S_m \times  \mathfrak S_{m+1,N}) \\ \sigma([m]) \cap L= \emptyset, \,   \sigma(J) \subseteq [m] }} 
  D_{J,\sigma}(x) \tau_{J} K_\sigma f(x)$$
where the coefficient $D_{J,\sigma}(x)$ is given by
$$D_{J,\sigma}(x) =  (-1)^{ \# \BB^c_{\sigma}} t^{(r+1-2N)r/2} A_{m}(x) A_{J \times [m+1,N]}(x,x) \varPhi(\sigma) \tau_J \bigl( A_{J \times \AA_{\sigma}}(x,x)K_{\sigma} \Delta_m(x) \bigr)
$$ 
\end{proposition}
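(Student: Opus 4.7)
The plan is to mirror the proof of Proposition~\ref{Op+}, swapping the roles of $t$-symmetrization and $t$-antisymmetrization. The central observation is that since $f(x)$ is bisymmetric, $\Delta_m^t(x)f(x)$ is \emph{$t$-antisymmetric} in the variables $x_1,\ldots,x_m$, i.e.\ $T_i\bigl(\Delta_m^t(x)f(x)\bigr) = -\Delta_m^t(x)f(x)$ for $i\in[m-1]$. Combined with the fact that $e_r(Y_1,\ldots,Y_m)$ commutes with each $T_i$ for $i\in[m-1]$ (via \eqref{TYi}), I would first derive a $t$-antisymmetric analog of Lemma~\ref{lemmaeY},
\[
e_r(Y_1,\ldots,Y_m)\,\Delta_m^t(x)f(x) \;=\; \frac{1}{[m-r]_{1/t}!\,[r]_{1/t}!}\,\mathcal A_m^t\, Y_{m-r+1}\cdots Y_m\,\Delta_m^t(x)f(x),
\]
by the same minimal-length coset-representative argument, replacing the $t$-symmetrizer $\mathcal S_N^t$ by the $t$-antisymmetrizer $\mathcal A_m^t$. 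The sign alternation $(-1/t)^{\ell(\sigma)}$ inside $\mathcal A_m^t$ is precisely what will ultimately produce the factor $(-1)^{\#\BB^c_\sigma}$ appearing in $D_{J,\sigma}$ but absent from $C_{J,\sigma}$.

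Next I would establish an analog of Lemma~\ref{rel Y} rewriting $Y_{m-r+1}\cdots Y_m$ in terms of $\omega^r$ times a nested product of Hecke generators (again using $\bar T_{i-1}\omega=\omega\bar T_i$ iteratively), and simplify using bisymmetry of $f$ so that the $\bar T_j$'s with $j>m$ act by $1/t$. From this point the computation runs in close parallel with the proof of Proposition~\ref{Op+}: apply Lemma~\ref{lemmasame} to reduce to the Cauchy generating function $g(x)/R_{[m]\times[m]}(x,y)$; use Corollary~\ref{RelB} to replace $\mathcal{F}_m(x,y)$ by $\mathcal A_m^{(y)}\mathcal{NF}_m(x,y)$; absorb the $\bar T$-tower via Lemma~\ref{TnegNF}; apply $\omega^r$ to produce the shift; and finally expand
\[
\mathcal A_m^t \;=\; \left(\sum_{\sigma\in\mathfrak S_m}(-1)^{\ell(\sigma)}K_\sigma\right)\prod_{1\le i<j\le m}\frac{tx_i-x_j}{x_i-x_j},
\]
and collect terms with each fixed $J\subseteq[N]$ of size $r$ through an antisymmetric analog of Lemma~\ref{RelU+}. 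The coefficient $D_{J,\sigma}(x)$ is then extracted by the specialization $y_i = x_{w(i)}^{-1}$, with Lemma~\ref{RelDet} identifying the resulting ratio of Vandermonde-type products as $\varPhi(\sigma)K_\sigma(\Delta_m(x))$.

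The hard part will be the combinatorial bookkeeping. The vanishing conditions $\sigma([m])\cap L=\emptyset$ and $\sigma(J)\subseteq[m]$ will both arise from vanishing Vandermonde factors in the specialization, exactly as in Proposition~\ref{Op+}; the second constraint is the antisymmetric analog of a condition that was automatic in the symmetric case, reflecting the fact that the shifts $\tau_J$ now originate from the action of $\omega^r$ on the first $m$ slots and can thus involve indices in $[m]$. Tracking the sign $(-1)^{\#\BB^c_\sigma}$ through all of these manipulations, and correctly identifying the modified arm factors $A_{J\times[m+1,N]}(x,x)$ (which replaces $A_{J\times L}(x,x)$) and $A_{J\times\AA_\sigma}(x,x)$ (which replaces $\widetilde A_{J\times\sigma([m])}(x,x)$), will require a careful analysis of the roles played by $\AA_\sigma$ and $\BB^c_\sigma$ on the antisymmetrized side. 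This verification is quite lengthy and, following the authors' choice, I would refer the reader to \cite{Con} for the complete details.
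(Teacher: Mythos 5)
The paper itself offers no proof of Proposition~\ref{Op-}: it states that the argument is ``quite similar'' to that of Proposition~\ref{Op+} and defers entirely to \cite{Con}. Your plan of transposing the Op$+$ proof is therefore exactly the intended route, and the one piece you actually work out is correct: since $T_i\bigl(\Delta_m^t(x)f(x)\bigr)=-\Delta_m^t(x)f(x)$ for $i\in[m-1]$, decomposing $\mathcal A_m^t$ over minimal coset representatives of $\mathfrak S_m/(\mathfrak S_{m-r}\times\mathfrak S_{m-r+1,m})$ and using $T_iY_{i+1}=tY_i\bar T_i$ does give
$e_r(Y_1,\ldots,Y_m)\,\Delta_m^t f=\tfrac{1}{[m-r]_{1/t}!\,[r]_{1/t}!}\,\mathcal A_m^t\,Y_{m-r+1}\cdots Y_m\,\Delta_m^t f$,
the factors $(-1/t)^{\ell}$ cancelling against the $(-t)^{\ell}$ produced by the $\bar T_i$'s acting as $-1$.

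The genuine gap is the step you describe as ``an analog of Lemma~\ref{rel Y}.'' That lemma relies on the telescoping $Y_N\cdots Y_{N-r+1}=t^{-r(r-1)/2}(\omega\bar T_1\cdots\bar T_{N-r})^r$, which works because the left prefix $T_i\cdots T_{N-1}$ of $Y_i$ cancels against the tail of the previously accumulated $\bar T$'s. For $Y_{m-r+1}\cdots Y_m$ with $m<N$ there is nothing to cancel these prefixes against: already $Y_m\,\Delta_m^t(x)f(x)=t^{-N+m}(-1)^{m-1}\,T_mT_{m+1}\cdots T_{N-1}\,\omega\,\Delta_m^t(x)f(x)$, and after applying $\omega$ the function is symmetric only in $x_{m+2},\ldots,x_N$, so the generators $T_{m+2},\ldots,T_{N-1}$ act as $t$ but a residual $T_mT_{m+1}$ survives. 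It is precisely these surviving cross-block generators that must produce the structurally new features of $D_{J,\sigma}$ --- shifts $\tau_J$ with $J\subseteq[N]$ rather than $J\subseteq[m+1,N]$, the constraint $\sigma(J)\subseteq[m]$, the factor $A_{J\times[m+1,N]}(x,x)$, and the placement of $\varPhi(\sigma)$ outside $\tau_J$ --- none of which your outline derives. Relatedly, your claim that the sign $(-1)^{\#\BB^c_\sigma}$ originates in the alternation $(-1/t)^{\ell(\sigma)}$ of $\mathcal A_m^t$ cannot be right as stated, since those signs cancel exactly in the coset decomposition above; the sign has to be tracked through the treatment of the residual $T$-prefixes and the final $y$-specialization. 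Since this is where all the new content of the proposition lies, the proposal as written is an accurate roadmap but not yet a proof; the missing computation is carried out in \cite{Con}.
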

Using a version of  Lemma~\ref{lemmaOmega} (where $C_{J,\sigma}(x)$ is replaced by $D_{J,\sigma}$) and \eqref{aut3}, which can be used since  $e_r(x_1,\dots,x_m)$ is symmetric in the variables $x_1,\dots, x_m$ and of homogeneous degree $r$, we get the desired Pieri rules.   
\begin{theorem} \label{PieriAlg-} For any  $1 \leq r\leq m$, the bisymmetric Macdonald polynomial $\mathcal P_{\Lambda}(x;q,t)$ is such that
\begin{equation*}
  e_{r}(x_{1},\dots,x_m)    {\mathcal P}_{\Lambda}(x;q,t) =q^r \sum_\Omega  \left(\frac{u_\Lambda^+(D_{J,\sigma})}{ u_\Lambda^+(\Delta_m^t)} \frac{u_{\Lambda_0^+}(\mathcal P_\Lambda)}{u_{\Lambda_0^+}(\mathcal P_\Omega)} \right)   {\mathcal P}_{\Omega}(x,q,t)  
\end{equation*}
where the coefficients $D_{J,\sigma}(x)$ were obtained explicitly in Proposition~\ref{Op-} and 
where the sum is over all superpartitions $\Omega$ such that there exists a
$\sigma \in \mathfrak S_N$ and
a $J \subseteq [N]$ of size $r$
such that
\begin{itemize}
\item  $\sigma(\Omega+(1^m))=\tau_J(\Lambda+(1^m))$ 
\item $\sigma([m]) \cap L = \emptyset$, where $L=[m+1,\dots,N]\setminus J$
\item $\sigma (J) \subseteq [m]$  
\item  $(\Omega,w\sigma)$ is a superevaluation, where $w$ is such that $(\Lambda,w)$ generates a superevaluation
\end{itemize}
\end{theorem}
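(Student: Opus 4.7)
The plan is to follow essentially verbatim the argument used to establish Theorem~\ref{theoPieri}, with only two substantive modifications: replace Proposition~\ref{Op+} by Proposition~\ref{Op-} as the explicit expansion of the action of the elementary symmetric function in the Cherednik operators, and replace \eqref{aut2} by \eqref{aut3} when extracting the corresponding eigenvalue. Concretely, I would start by applying Proposition~\ref{Op-} to $\Delta_m^t(x)\tilde{\mathcal P}^+_\Psi(x;q,t)$, obtaining the explicit expansion of $e_r(Y_1,\dots,Y_m)\Delta_m^t(x)\tilde{\mathcal P}^+_\Psi$ as a sum over pairs $(J,\sigma)$ satisfying $\sigma([m])\cap L=\emptyset$ and $\sigma(J)\subseteq[m]$. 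Since $e_r(x_1,\dots,x_m)$ is symmetric in $x_1,\dots,x_m$ and homogeneous of degree $r$, equation \eqref{aut3} identifies the same expression with $q^{-r}u_\Psi^+(e_r(x_1,\dots,x_m))\Delta_m^t(x)\tilde{\mathcal P}^+_\Psi$; the prefactor $q^{-r}$ is precisely the source of the $q^r$ appearing in the statement of Theorem~\ref{PieriAlg-}.

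Next, I would apply $u_\Lambda^+$ to both sides, where $(\Lambda,w)$ generates a superevaluation. Exactly as in the proof of Theorem~\ref{theoPieri}, the substitution performed by $u_\Lambda^+\circ\tau_J\circ K_\sigma$ on $\tilde{\mathcal P}^+_\Psi$ becomes the positive evaluation at $\Omega=\sigma^{-1}\tau_J(\Lambda+(1^m))-(1^m)$, after choosing a representative of the coset $[\sigma]$ making $\Omega$ biordered. I would then invoke the analog of Lemma~\ref{lemmaOmega} for $D_{J,\sigma}$ to conclude that $u_\Lambda^+(D_{J,\sigma})=0$ whenever $\Omega$ fails to be a superpartition or $(\Omega,w\sigma)$ fails to generate a superevaluation, and that each surviving $\Omega$ is obtained in a unique way from the double sum. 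This yields
\[
q^{-r}\,u_\Psi^+(e_r(x_1,\dots,x_m))\,u_\Lambda^+(\Delta_m^t)\,u_\Lambda^+(\tilde{\mathcal P}^+_\Psi)=\sum_\Omega u_\Lambda^+(D_{J,\sigma})\,u_\Omega^+(\tilde{\mathcal P}^+_\Psi),
\]
with the sum ranging over superpartitions $\Omega$ satisfying the four bulleted conditions.

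To finish, I would invoke the positive-evaluation symmetry of Theorem~\ref{SimMac} to interchange the roles of $\Lambda$ and $\Psi$ in the evaluations of $\tilde{\mathcal P}^+$, obtaining an equality of evaluations at $u_\Psi^+$ that holds for every superpartition $\Psi$. Since the bisymmetric Macdonald polynomials separate superevaluations, the identity lifts to an equality of bisymmetric polynomials. Rearranging by multiplying through by $q^r\,u_{\Lambda_0}^+(\mathcal P_\Lambda)/u_\Lambda^+(\Delta_m^t)$ and using the definition of $\tilde{\mathcal P}^+_\Lambda$ and $\tilde{\mathcal P}^+_\Omega$ then produces exactly the stated Pieri rule.

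The main obstacle, and the only genuinely new work, is the verification of the analog of Lemma~\ref{lemmaOmega} with $C_{J,\sigma}$ replaced by $D_{J,\sigma}$: one must trace which factors of $A_{J\times[m+1,N]}(x,x)$, $A_{J\times\AA_\sigma}(x,x)$, and $\varPhi(\sigma)$ are forced to vanish at the $u_\Lambda^+$-specialization whenever $\Omega$ is not a bona fide superpartition or $(\Omega,w\sigma)$ does not generate a superevaluation. The case analysis mirrors cases $(i)$--$(ii)$ and $(1)$--$(2)$ in the proof of Lemma~\ref{lemmaOmega}, but with the relevant indices displaced because the additional constraint $\sigma(J)\subseteq[m]$ moves part of $J$ into the first block rather than keeping it within $[m+1,N]$. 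This is the technical heart of the argument, while the remaining steps are formal and identical to those carried out in full in Section~6.
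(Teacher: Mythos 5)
Your proposal follows exactly the route the paper itself indicates: run the argument of Theorem~\ref{theoPieri} with Proposition~\ref{Op-} in place of Proposition~\ref{Op+}, use \eqref{aut3} (whose $q^{-d}$ with $d=r$ correctly accounts for the $q^r$ prefactor), invoke the $D_{J,\sigma}$-analog of Lemma~\ref{lemmaOmega}, and finish with the positive-evaluation symmetry of Theorem~\ref{SimMac}. This matches the paper's (deliberately abbreviated) proof, and you correctly isolate the one piece of genuinely new work --- the vanishing lemma for $D_{J,\sigma}$ --- which the paper likewise defers to \cite{Con}.
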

The $\Omega$'s that appear in the Pieri rules of Theorem~\ref{PieriAlg-} are also special vertical $r$-strips.
 \begin{definition}  We will say that 
    $\Omega/ \Lambda$ is a vertical $r$-strip of type II if
\begin{enumerate}
   \item  $\Omega/ \Lambda$ is a vertical $r$-strip
   \item there are no
     $\BSS$-rows in the diagram of   $\Omega/ \Lambda$
\end{enumerate}
  \end{definition}
For instance, if $\Lambda=(5,3,1;4,3)$ and $\Omega=(5,2,0;4,4,3)$ then 
   $\Omega/ \Lambda$ is a vertical 2-strip of type II.
\begin{equation}
{\small \tableau[scY]{  &&&& & \bl \cercle{} \\  &&&  \\   &&&  \tf \ocircle \\  &&  \\  & \tf \ocircle & \bl \cerclep  \\   \bl \cerclep \\ }}
\end{equation}
As was the case in Corollary~\ref{coroPieri}, we can rewrite Theorem~\ref{PieriAlg-} in a more precise way using vertical $r$-strips of type II.

\begin{corollary} \label{coroPieri2} For $r\in \{1,\dots,N-m\}$, the bisymmetric Macdonald polynomial
$\mathcal P_{\Lambda}(x;q,t)$ obeys the following Pieri rules
\begin{equation*}
  e_{r}(x_{1},\dots,x_m)    {\mathcal P}_{\Lambda}(x;q,t) =q^r \sum_\Omega  \left(\frac{u_\Lambda^+(D_{J,\sigma})}{ u_\Lambda^+(\Delta_m^t)} \frac{u_{\Lambda_0^+}(\mathcal P_\Lambda)}{u_{\Lambda_0^+}(\mathcal P_\Omega)} \right)   {\mathcal P}_{\Omega}(x,q,t)  
\end{equation*}
where the sum is over all superpartitions $\Omega$ such that $\Omega/\Lambda$ is a vertical $r$-strip of type II.  Note that $D_{J,\sigma}(x)$ was defined in 
Proposition~\ref{Op-}, where 
$\sigma$ and $J$ can be obtained in the following manner from  the diagram of $\Omega/\Lambda$:  
 let $\tilde \sigma$ be any permutation that interchanges the
  $\OSS$-rows and the  $\BCC$-rows, while leaving the remaining rows invariant (including the ${\footnotesize\tableau[scY]{ \tf \ocircle & \bl \cerclep} }$ rows), and let  $\tilde J$ be the set of
  $\OSS$-rows.  Then
$$\sigma = w^{-1} \tilde{\sigma} w  \quad {\rm   and   } \quad J = w^{-1} \tilde{\sigma} (\tilde{J})$$
where $w$ is such that $(\Lambda,w)$ is a superevaluation. 
\end{corollary}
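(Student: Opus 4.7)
The plan is to follow the same two-step strategy used to deduce Corollary~\ref{coroPieri} from Theorem~\ref{theoPieri}: I would prove two lemmas analogous to Lemma~\ref{cond2vert} and Lemma~\ref{vert2cond}, which together translate the four conditions of Theorem~\ref{PieriAlg-} into the combinatorial characterization in terms of vertical $r$-strips of type~II.

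First, assume $\sigma$ and $J$ satisfy the four conditions of Theorem~\ref{PieriAlg-}. Applying $w$ to $\sigma(\Omega+(1^m))=\tau_J(\Lambda+(1^m))$ gives $\Omega^\circledast=\tau_{w(J)}\Lambda^\circledast$, so $\Omega^\circledast/\Lambda^\circledast$ is a vertical $r$-strip. Subtracting $(1^m)$ and applying $w$ yields
\begin{equation*}
\Omega^*=\tau_{w(J)}\Lambda^*+w(1^m)-w\sigma(1^m),
\end{equation*}
from which $\Omega^*_i-\Lambda^*_i$ can be read off according to which of the three sets $w(J)$, $w([m])$, $w\sigma([m])$ contain $i$. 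The conditions $\sigma([m])\cap L=\emptyset$ and $\sigma(J)\subseteq[m]$ together force $\sigma$ to preserve $[m]\cup J$ setwise, from which it follows that every $j\in J\setminus[m]$ satisfies $\sigma^{-1}(j)\in[m]$ (otherwise $\sigma^{-1}(j)\in J$ would give $j\in\sigma(J)\subseteq[m]$). This is exactly what is needed to rule out $\BSS$-rows: a $\BSS$-row $i$ would require $i\in w(J)\setminus w([m])$ and $i\notin w\sigma([m])$, so setting $j=w^{-1}(i)\in J\setminus[m]$ the second condition gives $\sigma^{-1}(j)\notin[m]$, a contradiction. A routine check that $\Omega^*_i-\Lambda^*_i\in\{0,1\}$ in the remaining cases then shows that $\Omega/\Lambda$ is a vertical $r$-strip of type~II.

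Conversely, given $\Omega/\Lambda$ a vertical $r$-strip of type~II, I would define $\tilde\sigma$, $\tilde J$, $\sigma$, $J$ as in the statement. Since $\BCC$-rows contain no circle of $\Lambda$, they lie in $w([m+1,N])$, and hence $J=w^{-1}\tilde\sigma(\tilde J)\subseteq[m+1,N]$. The verification of $\sigma(\Omega+(1^m))=\tau_J(\Lambda+(1^m))$ and of $(\Omega,w\sigma s)$ being a superevaluation for a suitable $s\in\mathfrak{S}_m\times\mathfrak{S}_{m+1,N}$ follows the template of Lemma~\ref{vert2cond}, using that $\tilde\sigma^{-1}$ also swaps $\OSS$ with $\BCC$-rows. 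For $\sigma([m])\cap L=\emptyset$: $w([m])$ consists of all circle rows of $\Lambda$ (namely the $\OSS$, $\WCC$, and $\OSBC$-rows), so applying $\tilde\sigma$ produces $\BCC\cup\WCC\cup\OSBC$-rows, which is disjoint from $w(L)$. The new condition $\sigma(J)\subseteq[m]$ follows from the involutivity of $\tilde\sigma$ on $\OSS\cup\BCC$-rows: $\sigma(J)=w^{-1}\tilde\sigma^2(\tilde J)=w^{-1}(\tilde J)$, and $\tilde J$ consists of $\OSS$-rows, all of which lie in $w([m])$.

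The main technical subtlety will be the careful handling of the $\OSBC$-rows, which are allowed in type~II but forbidden in type~I. One must verify that taking $\tilde\sigma$ to fix these rows is consistent with each of the four conditions, and in particular that the new condition $\sigma(J)\subseteq[m]$, which replaces $J\subseteq[m+1,N]$ appearing in the type~I analog, is naturally produced by this construction. Once these pieces are assembled, the combinatorial reformulation of Theorem~\ref{PieriAlg-} yields Corollary~\ref{coroPieri2}, with full details deferred to \cite{Con}.
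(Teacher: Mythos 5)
Your overall strategy (two lemmas mirroring Lemma~\ref{cond2vert} and Lemma~\ref{vert2cond}) is the right one, and your reverse direction, including the observation that $\sigma(J)=w^{-1}(\tilde J)\subseteq[m]$ by the involutivity of $\tilde\sigma$, is sound. The paper itself gives no proof here (it defers to \cite{Con}), so the burden is entirely on your details, and there is a genuine gap in the forward direction, precisely at the step you dismiss as ``a routine check that $\Omega^*_i-\Lambda^*_i\in\{0,1\}$ in the remaining cases.'' Writing $\Omega^*_i-\Lambda^*_i=\varepsilon^{w(J)}_i+\varepsilon^{w([m])}_i-\varepsilon^{w\sigma([m])}_i$, the dangerous case is $i\in w(J)\cap w([m])$ with $i\notin w\sigma([m])$, which gives a difference of $2$. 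This case is vacuous in type~I because there $J\subseteq[m+1,N]$ forces $w(J)\cap w([m])=\emptyset$, but in type~II it is the new phenomenon. Your conditions yield $\sigma^{-1}(j)\in[m]$ for $j\in J\setminus[m]$, whereas excluding the difference-$2$ case requires $\sigma^{-1}(j)\in[m]$ for $j\in J\cap[m]$, and this does \emph{not} follow from conditions (2) and (3): conditions (2)--(3) only force $\sigma$ to permute $[m]\cup J$ and allow $\sigma^{-1}(j)\in J\cap[m+1,N]$ for $j\in J\cap[m]$.

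Worse, the bad case is not excluded by condition (4) either. Take $N=4$, $m=r=2$, $\Lambda=(3,2;1,0)$ (so $w=\mathrm{id}$), $J=\{1,3\}$, $\sigma=[2,3,1,4]$ and $\Omega=(2,1;5,0)$. Then $\sigma(\Omega+(1^2))=(5,3,2,0)=\tau_J(\Lambda+(1^2))$, $\sigma([2])=\{2,3\}$ is disjoint from $L=\{4\}$, $\sigma(J)=\{1,2\}\subseteq[2]$, and $(\Omega,\sigma)$ generates a superevaluation since $\sigma\Omega=(5,2,1,0)=\Omega^*$ and $\sigma(\Omega+(1^2))=(5,3,2,0)=\Omega^\circledast$; yet $\Omega^*/\Lambda^*=(5,2,1,0)/(3,2,1,0)$ adds two cells to the first row and is not a vertical strip. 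So the set of $\Omega$ described by the four bullets of Theorem~\ref{PieriAlg-} is strictly larger than the set of type~II strips, and the corollary cannot be obtained by a purely combinatorial translation of those bullets. Closing the gap requires an additional argument: either show that $u_\Lambda^+(D_{J,\sigma})=0$ whenever $J\cap[m]\not\subseteq\sigma([m])$ (an extra vanishing of the same flavour as the type~II analogue of Lemma~\ref{lemmaOmega}, which you also do not spell out), or justify the extra condition $\sigma^{-1}(J)\subseteq[m]$ --- which does hold for the $(\sigma,J)$ produced by the recipe in the statement, since $\sigma^{-1}(J)=w^{-1}(\tilde J)\subseteq[m]$, but is not among the hypotheses you are translating. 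As written, your argument would let non-strips such as $(2,1;5,0)$ into the sum.
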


\begin{example}
The superpartitions that appear in the  expansion of the  multiplication of $e_2(x_1,x_2)$ and $P_{(2,0;1)}(x;q,t) $ are given by:
\begin{equation*}
 \small \tableau[scY]{ & & \tf \ocircle & \bl \cerclep \\   & \bl \cerclep \\  \tf \ocircle  } \qquad \small \tableau[scY]{ & & \tf \ocircle & \bl \cerclep \\   \\ \tf \ocircle \\ \bl \cerclep  } \qquad  \small \tableau[scY]{ & & \tf \ocircle \\  & \bl \cerclep \\ \tf \ocircle \\ \bl \cerclep  } 
\end{equation*}
To be more precise, we have that
\begin{align*}
e_2(x_1,x_2) 
P_{(2,0;1)}=
   \mathcal P_{(3,1;1)}
- \dfrac{q(1+t)(1-t)}{1-qt^2} \mathcal  P_{(3,0;1,1)} + \dfrac{q^3(1-t)(1-q^2t^3)(1-qt)}{(1-qt)(1-q^3t^3)(1-q^2t)} \mathcal  P_{(1,0;3,1)}
\end{align*}
\end{example}


\begin{thebibliography}{99}

\bibitem{ABDLM}
  L. Alarie-V\'ezina, Olivier Blondeau-Fournier, Patrick Desrosiers, Luc Lapointe, and Pierre Mathieu, {\it Symmetric functions in superspace: a compendium of results and open problems (including a SageMath worksheet)}, arXiv:1903.07777. 
  \bibitem{BDF}
 T.\ H.\ Baker, C.\ F.\  Dunkl, and P.\ J.\ Forrester,\emph{ Polynomial eigenfunctions of the Calogero-Sutherland-Moser
models with exchange terms}, pages 37--42 in J.\ F.\ van Diejen and L.\ Vinet, \emph{ Calogero-Sutherland-Moser Models},  CRM Series in Mathematical Physics, Springer (2000).

\bibitem{B} O. Blondeau-Fournier, Les polyn\^omes de Macdonald dans le superespace et
le mod\`ele Ruijsenaars-Schneider supersym\'etrique, Ph.D. Thesis, Universit\'e Laval, 2014.


\bibitem{Ba}
W. Baratta, {\it Some properties of Macdonald polynomials with prescribed symmetry}, Kyushu J. Math. {\bf 64} (2010) 323-343.




\bibitem{BDLM}
O. Blondeau-Fournier, P.~Desrosiers, L.~Lapointe and P.~Mathieu,
{\it Macdonald polynomials in superspace:  conjectural definition and positivity conjectures}, Lett. Math. Phys. {\bf 101} (2012), 27--47.

\bibitem{BDLM2} O. Blondeau-Fournier, P. Desrosiers, L. Lapointe and P. Mathieu,
{\it Macdonald polynomials in superspace as eigenfunctions of commuting operators}, Journal of Combinatorics {\bf 3} (2012), no. 3, 495--562.

\bibitem{Che}
I. Cherednik, {\it Non-symmetric Macdonald polynomials}, Int. Math.  Res. Notices {\bf 10}  (1995), 483-515.


\bibitem{Con} M. Concha, Ph. D. Thesis, Universidad de Talca, 2024.


\bibitem{GJL}
  J.~Gatica, M.~Jones and L.~Lapointe, {\it Pieri rules for the Jack polynomials in superspace and the 6-vertex model},  Ann. Henri Poincar\'e {\bf 20}  (2019), 1051--1091.


\bibitem{GL} C. Gonz\'alez and L. Lapointe, {\it The norm and the evaluation of the Macdonald polynomials in superspace}, European J. Combin. {\bf 83} (2020), 103018.

  




\bibitem{Mac}
               {I.~G.~ Macdonald},
                {\it Symmetric functions and {H}all polynomials},
2nd ed., Clarendon Press, 1995.






\bibitem{Mac2}
               {I.~G.~ Macdonald},
                {\it Affine Hecke algebras and orthogonal polynomials},
Cambridge Univ. Press (2003). 


\bibitem{Mar}
D. Marshall, {\it Symmetric and non-symmetric Macdonald polynomials},
Ann. Comb. {\bf3} (1999) 385-415.




\end{thebibliography}
\end{document}